\DeclarePairedDelimiter{\ceil}{\lceil}{\rceil}
\DeclarePairedDelimiter{\floor}{\lfloor}{\rfloor}
\title[The mapping cone formula and surgery on knots in $S^3$]{The mapping cone formula in Heegaard Floer homology and Dehn surgery on knots in $S^3$}
\author{Fyodor Gainullin}
\email{fyodor.gainullin@gmail.com}
\newcommand{\cA}{A^+_k(K)}
\newcommand{\cAA}{\mathcal{A}^+_{i, p/q}(K)}
\newcommand{\cB}{B^+}
\newcommand{\cBB}{\mathcal{B}^+}
\newcommand{\cv}{v_k}
\newcommand{\ch}{h_k}
\newcommand{\hv}{\boldsymbol{v}_k}
\newcommand{\hvt}{\boldsymbol{v}_k^T}
\newcommand{\hvr}{\boldsymbol{v}_k^{red}}
\newcommand{\hh}{\boldsymbol{h}_k}
\newcommand{\hA}{\boldsymbol{A}^+_k(K)}
\newcommand{\hAo}{\boldsymbol{A}^+_0(K)}
\newcommand{\hB}{\boldsymbol{B}^+}
\newcommand{\T}{\mathcal{T}^+_d}
\newcommand{\To}{\mathcal{T}^+_0}
\newcommand{\Ta}{\mathcal{T}^+}
\newcommand{\hAA}{\mathbb{A}^+_{i,p/q}(K)}
\newcommand{\hBB}{\mathbb{B}^+}
\newcommand{\hAT}{\boldsymbol{A}^T_k(K)}
\newcommand{\hATo}{\boldsymbol{A}^T_0(K)}
\newcommand{\hAr}{\boldsymbol{A}^{red}_k(K)}
\newcommand{\hAro}{\boldsymbol{A}^{red}_0(K)}
\newcommand{\hAAT}{\mathbb{A}^T_{i,p/q}(K)}
\newcommand{\hAAr}{\mathbb{A}^{red}_{i,p/q}(K)}
\newcommand{\cD}{D^+_{i,p/q}}
\newcommand{\hD}{\boldsymbol{D}^+_{i,p/q}}
\newcommand{\hDT}{\boldsymbol{D}^T_{i,p/q}}
\newcommand{\hDr}{\boldsymbol{D}^{red}_{i,p/q}}
\newcommand{\cAn}{A^+_{\floor{\frac{i+pn}{q}}}(K)}
\newcommand{\ZZ}{\mathbb{Z}}
\newcommand{\MC}{\mathbb{X}^+_{i, p/q}}
\newcommand{\FF}{\mathbb{F}}
\newcommand{\FR}{\mathbb{F}[U]}
\newcommand{\KS}{S^3_{p/q}(K)}
\newcommand{\dU}{d(L(p,q),i)}
\newcommand{\dK}{d(\KS ,i)}
\newcommand{\hATn}{\boldsymbol{A}^T_{\floor{\frac{i+pn}{q}}}(K)}
\newcommand{\hArn}{\boldsymbol{A}^{red}_{\floor{\frac{i+pn}{q}}}(K)}
\newcommand{\HF}{HF^+(\KS ,i)}
\newcommand{\HFF}{HF^+(\KS )}
\newcommand{\HFr}{HF_{red}(\KS ,i)}
\newcommand{\HFFr}{HF_{red}(\KS )}
\newcommand{\As}{\mathbb{A}^s(K)}
\newcommand{\hAq}{\boldsymbol{A}^{red}_{\floor{\frac{i}{q}}}(K)}
\newcommand{\F}{F^+_{;i}}
\newcommand{\Fo}{F^+_{0;i}}
\newcommand{\Fm}{F^+_{m;i}}
\newcommand{\FO}{F^+_{;0}}
\newcommand{\FOo}{F^+_{0;0}}
\newcommand{\FOm}{F^+_{m;0}}
\newcommand{\Sp}{$\mathrm{Spin}^c$-structure}
\newcommand{\Sps}{$\mathrm{Spin}^c$-structures}
\newcommand{\cfk}{$CFK^{\infty}(K)$}
\def\co{\colon\thinspace}
\def\subrangle#1{\stackengine{5pt}{}{$\!\scriptstyle #1$}{U}{l}{F}{F}{L}}
\newtheorem{theorem}{Theorem}
\newtheorem{lemma}[theorem]{Lemma}
\newtheorem{prop}[theorem]{Proposition}
\newtheorem{defn}[theorem]{Definition}
\newtheorem{cor}[theorem]{Corollary}
\newtheorem{q}[theorem]{Question}
\begin{document}

\begin{abstract}
We write down an explicit formula for the $+$ version of the Heegaard Floer homology (as an absolutely graded vector space over an arbitrary field) of the results of Dehn surgery on a knot $K$ in $S^3$ in terms of homological data derived from \cfk. This allows us to prove some results about Dehn surgery on knots in $S^3$. In particular, we show that for a fixed manifold there are only finitely many alternating knots that can produce it by surgery. This is an improvement on a recent result by Lackenby and Purcell. We also derive a lower bound on the genus of knots depending on the manifold they give by surgery. Some new restrictions on Seifert fibred surgery are also presented.
\end{abstract}

\maketitle

\section{Introduction}
\label{sec:intro}

Dehn surgery is a fundamental technique in $3$-manifold topology. Indeed, we can construct any $3$-manifold\footnote{In this paper, whenever we say `$3$-manifold' we mean `closed connected orientable $3$-manifold'.} beginning with any other $3$-manifold and performing Dehn surgery enough times. However, it is a highly non-trivial and widely open problem to understand what manifolds can be obtained by doing Dehn surgery once (even starting from the `simplest' $3$-manifold, namely $S^3$) and what knots yield a fixed manifold by surgery.

Heegaard Floer theory is a relatively recent collection of powerful tools in low-dimensional topology. It has many aspects and provides invariants in many different contexts. In this paper, we are only concerned with the 3-manifold and knot invariants (defined in \cite{OSzTopInv,rasmussenThesis,OSzKnotInv}). The collections of 3-manifold invariants and knot invariants are connected via the surgery formula that expresses the Heegaard Floer homology of a 3-manifold obtained by surgery on a given knot in terms of the Heegaard Floer homology data of the knot (see \cite{OSzRatSurg}). This makes Heegaard Floer homology an especially suitable tool for investigating questions about Dehn surgery.

A natural question about Dehn surgery is whether there are manifolds that can be obtained by surgery on infinitely many distinct knots in $S^3$. The answer is `yes' -- see \cite{osoinach} or \cite{teragaito}. There is still hope, however, that perhaps this does not happen for some nice classes of knots.

One interesting and well-studied class of knots is that of alternating knots. At first sight, their diagrammatic definition seems to have little to do with the geometric-topological properties of these knots. However, this is not so -- see \cite{lackenbyPurcell} and references therein. In particular, the following is \cite[Theorem 1.3]{lackenbyPurcell}

\begin{theorem}[Lackenby-Purcell]
For any closed $3$-manifold $M$ with sufficiently large Gromov norm, there
are at most finitely many prime alternating knots $K$ and fractions $p/q$ such that $M$ is
obtained by $p/q$ surgery along $K$.
\label{thm:lackenby}
\end{theorem}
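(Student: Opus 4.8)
The plan is to prove this by hyperbolic geometry rather than Heegaard Floer homology, since the Gromov norm $\|M\|$ and its relation to hyperbolic volume are the natural tools. Throughout I use that for closed hyperbolic $M$ one has $\mathrm{Vol}(M)=v_3\|M\|$, and that Gromov norm is monotone under Dehn filling, so that if $M=S^3_{p/q}(K)$ then $\|M\|\le\|S^3\setminus K\|$. The hypothesis that $\|M\|$ is large will be used both to force $M$ (and the relevant geometric pieces) to be genuinely hyperbolic, thereby excluding the small-Gromov-norm families of Osoinach and Teragaito, and to make Mostow rigidity available.

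First I would replace the knot by a combinatorial template. Given a prime alternating diagram $D$ of $K$ with twist number $t(D)$, form the fully augmented link $L$ by encircling each twist region with a crossing circle; then $S^3\setminus K$ is a Dehn filling of the hyperbolic manifold $S^3\setminus L$, and by Lackenby's estimates $\mathrm{Vol}(S^3\setminus L)$ is comparable to $t(D)$, say $c_1\,t(D)\le \mathrm{Vol}(S^3\setminus L)\le c_2\,t(D)$. The manifold $M$ is itself obtained from $S^3\setminus L$ by Dehn filling every cusp: the crossing circles with slopes $1/n_i$ encoding the $n_i$ twists, and the knot cusp with the surgery slope $p/q$.

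The content-bearing step is to bound $t(D)$ from above in terms of $\|M\|$. When all filling slopes are geometrically long I would apply the Futer--Kalfagianni--Purcell volume inequality, which gives $\mathrm{Vol}(M)\ge \kappa\,\mathrm{Vol}(S^3\setminus L)\ge \kappa c_1\,t(D)$ for a definite $\kappa>0$; since $\mathrm{Vol}(M)=v_3\|M\|$ is fixed, this bounds $t(D)$, hence leaves only finitely many templates $L$. Short slopes must be handled separately: on each cusp only boundedly many slopes have length below a fixed threshold (the $6$-theorem), a short crossing-circle slope $1/n_i$ forces $|n_i|$ bounded, and the surgery slope is long for all but finitely many $p/q$. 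For each of the finitely many templates I would then invoke Thurston's hyperbolic Dehn surgery theorem: the fillings of the fixed cusped manifold $S^3\setminus L$ have volumes strictly below and increasing to $\mathrm{Vol}(S^3\setminus L)$, are pairwise non-homeomorphic for distinct long slope tuples, and by Mostow rigidity a fixed $M$ can match only finitely many of them; together with the finiteness of short slopes this bounds the multiplicities $n_i$ and the coefficient $p/q$, leaving only finitely many pairs $(K,p/q)$.

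The main obstacle is precisely the bookkeeping across the many cusps of $L$ when some filling slopes are short and others are long: the Futer--Kalfagianni--Purcell inequality degrades once the sum of inverse squared normalized lengths approaches $1$, so the clean volume comparison can fail, and one must argue that a fixed $M$ nonetheless cannot be produced by infinitely many distinct slope tuples. Controlling this, equivalently ruling out hidden infinite families collapsing onto a single large-Gromov-norm $M$, is where the rigidity and finiteness theorems of Thurston and J\o rgensen, and the largeness hypothesis on $\|M\|$, do the real work.
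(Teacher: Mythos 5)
First, a point of comparison that matters here: the paper does not prove Theorem \ref{thm:lackenby} at all. It is quoted verbatim from \cite{lackenbyPurcell} as motivation, and the paper's own contribution, Theorem \ref{thm:alternating}, \emph{supersedes} it by removing the Gromov norm and primeness hypotheses via entirely different means (the denominator bound of Theorem \ref{thm:q-bound}, the bound on torsion coefficients in Lemma \ref{torsBound}, Murasugi's theorem on alternating Alexander polynomials, and finiteness of alternating knots with given determinant). So your hyperbolic-geometric route can only be measured against Lackenby and Purcell's original argument, of which it is a partial reconstruction -- and as a reconstruction it has two genuine gaps.

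(a) Your opening claim that large $\|M\|$ ``forces $M$ to be genuinely hyperbolic'' is false: Gromov norm is additive under connected sum and JSJ decomposition, so a manifold of arbitrarily large norm may be non-hyperbolic. In the actual argument, hyperbolicity of $M$ is a \emph{conclusion} of the 6-theorem plus geometrization once all filling slopes are long; it cannot be an input extracted from the largeness hypothesis, and without it your identity $\mathrm{Vol}(M)=v_3\|M\|$ and the appeal to Mostow rigidity are unavailable in exactly the short-slope cases you need them for. (b) More seriously, the ingredient that makes the short/long dichotomy work is missing: the real content of \cite{lackenbyPurcell} is their lower bound on the maximal cusp volume (hence cusp area) of a hyperbolic alternating knot, linear in the twist number $t(D)$. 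Combined with the universal upper bound on meridian length, this forces \emph{every} slope $p/q$ on the knot cusp to have length greater than $6$ once $t(D)$ is large -- which is precisely what the hypothesis ``$\|M\|$ sufficiently large'' buys, since $\|M\|\le\|S^3\setminus K\|\le c\, t(D)$ forces $t(D)$ large. Your substitute, the volume estimate $\mathrm{Vol}(S^3\setminus L)\ge c_1 t(D)$ for the fully augmented link, does not yield any slope-length control on the knot cusp, so the Futer--Kalfagianni--Purcell inequality only applies when all normalized slope lengths are simultaneously long; the mixed regime that you yourself flag as the ``main obstacle'' is exactly the case your sketch leaves open. It cannot be closed by J{\o}rgensen--Thurston finiteness alone, because as $K$ ranges over infinitely many candidate knots the cusped template $S^3\setminus L$ varies too, so you are not filling one fixed manifold. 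The standard repair -- fill the boundedly many short crossing-circle slopes first (finitely many resulting templates), then show the surgery slope is uniformly long -- again requires the cusp-area theorem that is absent from your write-up. So the architecture is right, but the step doing the real work is assumed rather than proved.
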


In fact, the statement about fractions $p/q$ can be deduced, for example, from \cite{niWu}. We will also show in this paper that given any manifold $Y$ there is a universal bound on $q$ for such fractions which also implies that they are finite in number. Using techniques that are very different from those used in \cite{lackenbyPurcell} we are able to establish the following improvement of this theorem.

\begin{theorem}
Let $Y$ be a $3$-manifold. There are at most finitely many alternating knots $K \subset S^3$ such that $Y = \KS$.
\label{thm:alternating}
\end{theorem}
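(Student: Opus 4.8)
The plan is to fix $Y$ and extract from $HF^+(Y)$ enough numerical constraints on any alternating knot $K$ with $Y=\KS$ to bound its crossing number; finiteness then follows since only finitely many knots have crossing number below a fixed value. First I would pin down the surgery slope. For $p\neq 0$ the group $H_1(\KS)$ is cyclic of order $|p|$, so $|p|=|H_1(Y)|$ is determined by $Y$ (the case $p=0$, forced when $b_1(Y)=1$, is treated separately, and $b_1(Y)\geq 2$ cannot occur). Combining this with the universal bound on $q$ established in the paper (also deducible from \cite{niWu}) leaves only finitely many fractions $p/q$, so it suffices to fix $p/q$ and bound the complexity of $K$.

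Next I would use that alternating knots are thin: by Ozsv\'ath--Szab\'o their $CFK^\infty$ is determined by the Alexander polynomial and the signature, and splits as a single staircase summand of length $2|\tau(K)|+1$, where $\tau(K)=-\sigma(K)/2$, together with some number $b$ of box (square) summands. Hence the total rank of $\widehat{HFK}(K)$, which for a thin knot equals the determinant $\det(K)=|\Delta_K(-1)|=\sum_i|a_i|$, satisfies $\det(K)=(2|\tau(K)|+1)+4b$. Bounding $\det(K)$ is thus equivalent to bounding both $|\tau(K)|$ and the number of boxes $b$.

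The crux is to feed this decomposition into the explicit mapping cone description of $\HFFr$ developed in the paper. The staircase summand governs the correction terms $\dK$: comparing them with the fixed values $\dU$ of the corresponding lens space bounds the staircase contribution and hence $|\tau(K)|$, where I would use the mirror $\overline{K}$ (again alternating), for which $p/q$ surgery gives an orientation reversal of a manifold determined by $Y$, to obtain the bound on $\tau(K)$ from both sides. Each box summand, by contrast, contributes a definite nonzero amount to the reduced homology of the relevant complexes $\cA$, and I would argue from the explicit formula that these contributions cannot all cancel in the mapping cone, yielding $\mathrm{rank}\,\HFFr\geq \varepsilon(p/q)\,b$ for some positive $\varepsilon(p/q)$. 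Since $HF^+(Y)$ is fixed, both $\mathrm{rank}\,HF_{red}(Y)$ and the correction terms are fixed, so $b$ and $|\tau(K)|$, and therefore $\det(K)$, are bounded.

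Finally I would invoke the classical fact that for prime alternating knots the crossing number is bounded by the determinant, $c(K)\leq\det(K)$, extending to the general case via additivity of crossing number and multiplicativity of the determinant under connected sum. A bound on $\det(K)$ then bounds $c(K)$, and finiteness follows. I expect the main obstacle to be the non-cancellation claim of the previous paragraph: showing that the reduced homology of the mapping cone genuinely grows with the number of boxes requires a careful analysis of the gradings and of the maps in the explicit formula, and it is precisely thinness that makes this control possible.
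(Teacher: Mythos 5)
Your proposal is correct in outline, but it takes a genuinely different route through the middle of the argument. The paper never invokes thinness of $CFK^\infty$: it bounds the $\ell^1$-norm $\sum_{i\geq 0}|t_i(K)|$ of the torsion coefficients by a constant $c(Y)$ (Lemma \ref{torsBound}, combining the Euler characteristic identity $t_k = V_k + \chi(\hAr)$ of Lemma \ref{lemma:euler} with the rank formula of Proposition \ref{prop:rankpos} and the $d$-invariant formula \eqref{d-inv}), then uses Murasugi's theorem (Theorem \ref{murasugi}: all coefficients $a_i$, $0\leq i\leq g(K)$, of an alternating knot are nonzero) to force $g(K)\leq 3c(Y)$ -- a larger genus would produce three consecutive vanishing torsion coefficients and hence a vanishing $a_{i+1}$ -- so that only finitely many Alexander polynomials can occur (Lemma \ref{g-Alex}), and finishes with Proposition \ref{prop:fin-Alex}. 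You instead exploit the staircase-plus-boxes classification of thin complexes to bound $\det(K) = (2|\tau(K)|+1)+4b$ directly: $|\tau(K)|$ through the correction terms via Corollary \ref{cor:dtpos} and Proposition \ref{prop:hfineg} (using the relation between $V_0$, $\overline{V}_0$ and $\pm\tau$ for thin knots, which needs a citation), and $b$ through the rank of $HF_{red}(Y)$. The step you flag as the main obstacle -- that box contributions cannot cancel in the mapping cone -- is in fact already a theorem in the paper's Section \ref{sec:calculations}: Proposition \ref{prop:hfipos} shows $\hAAr$ splits off $\ker(\hD)$ as a direct summand, and since each box contributes exactly one dimension to $\As$ one gets $\delta(K)=b$, so Proposition \ref{prop:rankpos} yields $qb \leq \dim HF_{red}(Y) + pV_0$; with $V_0$ bounded by the correction terms and only finitely many slopes possible (via $|p|=|H_1(Y)|$ and Theorem \ref{thm:q-bound}, exactly as in the paper's definition of $n(Y)$ and $c(Y)$), this bounds $b$. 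So your approach closes, at the cost of heavier structural input (the thin-complex decomposition is established over $\FF_2$, which is harmless since finiteness only requires one field, but it is a stronger ingredient than the Euler-characteristic-level data the paper uses); what it buys is an explicit bound on $\det(K)$ itself rather than a finite list of Alexander polynomials, bypassing Murasugi entirely.

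Two loose ends in your write-up. First, the $p=0$ case (forced when $b_1(Y)=1$) is only gestured at; the paper disposes of it by noting that $HF^+$ of the zero-surgery determines $\Delta_K$ outright (Propositions 10.14 and 10.17 of \cite{OSzPropApp}; cf.\ Proposition \ref{zerosurg}), and you would need either this or the thin analogue spelled out. Second, your final reduction needs no primeness discussion or connected-sum argument: the Bankwitz theorem as quoted in the paper bounds the crossing number by the determinant for all alternating knots, so a bound on $\det(K)$ finishes immediately.
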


Heegaard Floer homology is also very useful in bounding genera of various surfaces. In particular, knot Floer homology determines the genus of a knot \cite{OSzGenusBounds}. Combining this with information about surgery often allows one to put restrictions on genera of knots admitting certain surgeries. For example, if surgery on a knot $K$ produces an $L$-space $Y$ (a generalisation of lens spaces -- see below for the definition), then $2g(K)-1 \leq |H_1(Y)|$, where by $g(K)$ we mean the genus of $K$ (see e.g. \cite[Corollary 1.4]{OSzRatSurg}).

We derive a bound which is in some respects `opposite' to the bound for $L$-spaces. It is a lower bound which can be non-trivial only for non-$L$-spaces. For the statement of the theorem below and the rest of the paper note that we work over an arbitrary field $\FF$. Heegaard Floer homology is then an $\FR$-module and we denote the action of $U$ simply by multiplication. For a rational homology sphere $Y$, $HF_{red}(Y)$ denotes its reduced Floer homology.

\begin{theorem}
For any knot $K \subset S^3$ and any $p/q \in \mathbb{Q}$ we have
$$
U^{g(K) + \ceil{g_4(K)/2}}\cdot\HFFr = 0.
$$
\label{thm:genus}
\end{theorem}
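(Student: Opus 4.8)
The plan is to read the bound directly off the rational surgery mapping cone formula. For each $i$ the group $\HFr$ is the reduced part of $H_*(\MC)$, that is, the quotient of $H_*(\MC)$ by its maximal $U$-divisible submodule (the image of $HF^\infty$); since $\HFFr=\bigoplus_i\HFr$, it suffices to bound, uniformly in $i$, the smallest power of $U$ annihilating this reduced part. The first step is to replace $\MC$ by a finite truncation: because $\cA=\Ta$ whenever $|k|\ge g(K)$ and the tower maps $\cv,\ch$ are isomorphisms outside a bounded window of $k$, the mapping cone is quasi-isomorphic to a finite subcone, so the whole question becomes a finite linear-algebra problem over $\FR$.

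The second step is to separate the tower and reduced contributions by a filtration. Let $\mathcal{X}^T\subseteq\MC$ be the subcomplex consisting of the maximal $U$-divisible submodule $\Ta\subseteq\cA$ in each source term together with all of the (already divisible) target terms $\cB=\Ta$. Since $D=\cv+\ch$ carries $\Ta$ into $\cB$, this is genuinely a subcomplex, and the quotient complex carries the zero differential, giving
$$
0\to\mathcal{X}^T\to\MC\to\bigoplus_k A^{red}_k(K)\to 0 .
$$
Hence $H_*\!\big(\bigoplus_k A^{red}_k(K)\big)=\bigoplus_k A^{red}_k(K)$, and each $A^{red}_k(K)$ should be annihilated by $U^{g(K)}$ (the reduced groups vanish for $|k|\ge g(K)$, and the support of $CFK^\infty(K)$ in Alexander gradings $[-g(K),g(K)]$ bounds the torsion order by $g(K)$); as the torsion order of a direct sum is the maximum of the summands, the whole quotient is $U^{g(K)}$-torsion. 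The homology of $\mathcal{X}^T$ is one surviving tower $\Ta$ (the $HF^\infty$ part) plus finite torsion; writing $\cv|_{\Ta}=U^{V_k}$ and $\ch|_{\Ta}=U^{H_k}$, a direct staircase computation shows this torsion is annihilated by $U^{V_0}$, the value at $k=0$ dominating the relevant exponents, and I would invoke the slice-genus bound $V_0\le\ceil{g_4(K)/2}$.

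Feeding these into the long exact sequence of the pair exhibits the reduced part of $H_*(\MC)$ as an extension whose sub- and quotient-modules are annihilated by $U^{\ceil{g_4(K)/2}}$ and $U^{g(K)}$ respectively; the elementary fact that $U^{a+b}$ annihilates any extension of a $U^b$-torsion module by a $U^a$-torsion module then gives $U^{g(K)+\ceil{g_4(K)/2}}\cdot\HFr=0$, and summing over $i$ yields the theorem. I expect the main obstacle to be the two sharp torsion estimates that feed this extension argument: proving that $A^{red}_k(K)$ has $U$-torsion order at most $g(K)$ (which requires separating the genuine reduced homology from the divisible tower, since individual chain-level generators deep in the region $\{i\ge 0,\ j\ge 0\}$ can have arbitrarily large torsion order) and, above all, the inequality $V_0\le\ceil{g_4(K)/2}$, which is the sole point where the four-dimensional (slice) genus rather than the Seifert genus enters and which rests on the behaviour of the correction terms under surgery. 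A secondary concern is verifying that the truncation inflates neither estimate and that the resulting bound is genuinely uniform over the $\mathrm{Spin}^c$-structure $i$.
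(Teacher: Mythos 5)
Your overall skeleton is the same as the paper's: split the mapping cone into a ``tower'' part and a reduced part, bound the torsion of the reduced $A_k$-groups by $U^{g(K)}$, bound the torsion coming from the tower part by $U^{V_0}$ with $V_0 \leq \ceil{g_4(K)/2}$ via Rasmussen's result, and combine the two bounds additively through an extension. However, there is a genuine flaw in how you set this up: the subcomplex $\mathcal{X}^T \subseteq \MC$ you define does not exist. At the chain level $U$ acts \emph{surjectively} on each quotient complex $\cA$ and on $\cB$ (every class $[\boldsymbol{x},i,j]$ is $U\cdot[\boldsymbol{x},i+1,j+1]$), so the maximal $U$-divisible submodule of each chain group is the whole group, and your short exact sequence with quotient $\bigoplus_k \hAr$ carrying zero differential is vacuous as stated. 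The tower/reduced splitting $\hA \cong \hAT \oplus \hAr$ exists only after passing to homology, which is precisely why the paper works with the exact triangle induced by $0 \to \cBB \to \MC \to \cAA \to 0$ and decomposes the induced map $\hD = \hDT \oplus \hDr$ (Propositions \ref{prop:hfipos} and \ref{prop:hfineg}), rather than filtering the cone at the chain level. The repair is routine --- your staircase computation and extension argument go through verbatim on homology, where $x \in \ker(\hD)$ gives $U^{g}x \in \ker(\hDT)$, whose reduced part is killed by $U^{V_0}$ --- but as written the construction fails.

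The second issue is that the estimate $U^{g(K)}\cdot\hAr = 0$, which you correctly identify as the main obstacle, is left unproven, and your parenthetical justification (support of $CFK^\infty$ in Alexander gradings $[-g,g]$) does not by itself yield it, for exactly the reason you flag. The paper's Lemma \ref{lemma:As-genus} supplies the missing idea: the subquotient $\Delta_k = C\{i<0 \text{ and } j \geq k\}$ satisfies $U^{g}\cdot\Delta_k = 0$ by its bounded height, surjectivity of $\hv$ turns the associated triangle into a short exact sequence giving $H_*(\Delta_k) \cong \ker(\hv)$, and then $\hAr \cong \ker(\hv)/\ker(\hvt)$ transfers the bound to the reduced group. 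Two smaller points: your framing silently assumes a single tower (a rational homology sphere), so the $p=0$ case --- where $S^3_0(K)$ has $HF^\infty$ of rank two in the torsion $\mathrm{Spin}^c$-structure --- needs the separate treatment the paper gives it in Proposition \ref{zerosurg}; and your truncation step, while standard, is not actually needed once one argues on homology, since elements of $\hAA$ are finitely supported by construction.
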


We remark that if $K$ is an $L$-space knot, then $U^{\ceil{g_4(K)/2}}\cdot\HFFr = \nolinebreak 0$. Moreover, for any $N>0$ and $p>0$ there is a three-manifold $Y$ which can be obtained by a surgery on a knot in $S^3$ such that $U^N\cdot HF_{red}(Y) \neq 0$ and $|H_1(Y)| =\nolinebreak p$.

Here $g_4(K)$ is the slice genus of $K$. We obviously have $\ceil{\frac{g_4(K)}{2}} \leq \ceil{\frac{g(K)}{2}}$, so the theorem does give a lower bound for $g(K)$.

A different lower bound for the knot genus producing non-$L$-spaces has been found by Jabuka in \cite{jabukaGenus}, but unlike our bound, it also depends on the denominator of the slope. Note also that there exists a manifold for which the genus of knots producing it is not bounded above \cite{teragaito}.

More recently, Jabuka \cite{jabukaHat} has produced a new lower bound on the genus that does not involve the denominator of the slope. He also obtained the ranks of $\widehat{HF}$ for the result of surgery on a knot in $S^3$. His genus bound appears to be quite different from ours.

Using the genus bound of Theorem \ref{thm:genus} and some other considerations we are able to prove results about Seifert fibred surgery on knots in $S^3$. In \cite{wuCones} Wu (improving on the results of \cite{OSzSeifFibr}) has proven the following (the definitions of Seifert orientation and torsion coefficients will be provided later).

\begin{theorem}[Wu]
Let $K \subset S^3$ be a knot. Suppose there is a rational number $p/q > 0$ such that $Y = \KS$ is Seifert fibred. 

If $Y$ is a positively oriented Seifert fibred space, then all the torsion coefficients $t_i(K)$ are non-negative and $\widehat{HFK}(K,g(K))$ is supported in even degrees. In particular, $\deg \Delta_K = g(K)$.

If $Y$ is a negatively oriented Seifert fibred space and $0<p/q<3$, then for all \linebreak $i>0$ the torsion coefficients $t_i(K)$ are non-positive. If $Y$ is a negatively oriented Seifert fibred space, $g(K)>1$ and $2g(K)-1 > p/q$, then $\widehat{HFK}(K,g(K))$ is supported in odd degrees. In particular, $\deg \Delta_K = g(K)$.
\label{thm:wu}
\end{theorem}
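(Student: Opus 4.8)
The plan is to combine two ingredients: the rigidity of $HF^+$ for Seifert fibred rational homology spheres on the one hand, and the mapping cone description of $HF^+(\KS)$ in terms of the knot complexes $A^+_k(K)$ on the other. For the first ingredient I would invoke the computation of Heegaard Floer homology of Seifert fibred spaces via the plumbing calculus of Ozsv\'ath and Szab\'o (\cite{OSzSeifFibr}): such a space over $S^2$ bounds a definite plumbing with at most one bad vertex, and the resulting $HF^+_{red}$ is supported in gradings of a single parity, that parity being governed by the orientation (even for one orientation, odd for the other). Thus the hypothesis that $Y=\KS$ is a positively (resp.\ negatively) oriented Seifert fibred space translates into the statement that $HF^+_{red}(Y,i)$ lives entirely in even (resp.\ odd) degree for every $\mathrm{Spin}^c$-structure $i$. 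Consequently its graded Euler characteristic equals $\pm\dim_{\FF} HF^+_{red}(Y,i)$ and therefore has a fixed sign.

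Next I would feed this sign information through the surgery formula. Using the mapping cone, the graded Euler characteristic $\chi\bigl(HF^+_{red}(\KS,i)\bigr)$ can be written as a signed sum of the Euler characteristics $\chi\bigl(HF^+_{red}(A^+_k(K))\bigr)$ over the indices $k$ appearing in the cone, corrected only by the background lens space $L(p,q)$, whose reduced homology vanishes so that it contributes through the towers ($d$-invariants) alone. Since $\chi\bigl(HF^+_{red}(A^+_k(K))\bigr)$ equals, up to a fixed sign, the torsion coefficient $t_k(K)$, the parity statement above forces each of these signed sums to be non-negative (resp.\ non-positive). The role of the slope restriction $0<p/q<3$ in the negative case is precisely to keep the cone short enough that the windows of indices appearing do not overlap destructively, so that the sign of an \emph{individual} $t_i(K)$ can be isolated from the sign of the sums; I would extract the $t_i(K)$ one at a time by inducting downward on $i$, peeling off the outermost coefficient at each stage. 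Outside this range different $t_i(K)$ of opposite signs could cancel in the sums, which is why the hypothesis is needed.

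Finally, for the statements about the top group $\widehat{HFK}(K,g(K))$ I would specialise to the extreme Alexander grading. Writing $\Alex=\sum_i a_i t^i$ for the symmetrised Alexander polynomial, the torsion coefficients satisfy $t_i(K)=\sum_{j\ge 1} j\,a_{i+j}$, so that $t_{g(K)-1}(K)=a_{g(K)}$, while $a_{g(K)}=\chi\bigl(\widehat{HFK}(K,g(K))\bigr)$ up to the fixed grading sign. Because $\widehat{HFK}(K,g(K))\neq 0$ detects the Seifert genus, the sign constraint on $t_{g(K)-1}(K)$ forces $a_{g(K)}$ to be non-zero of the prescribed sign, which immediately yields $\deg\Delta_K=g(K)$. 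A closer look at the extreme $\mathrm{Spin}^c$-structure, where the homology of $A^+$ in top filtration level is controlled by $\widehat{HFK}(K,g(K))$ alone, then shows this top group lies in the single parity dictated by $Y$ (under the auxiliary hypotheses $g(K)>1$ and $2g(K)-1>p/q$, needed to guarantee that the top grading is not disturbed by the cone). I expect the main obstacle to be the second paragraph: cleanly transferring the parity of $HF^+_{red}(Y)$ through the mapping cone to the individual $A^+_k(K)$, since the cone interleaves the tower ($d$-invariant) contributions with the reduced parts, and one must verify that only the reduced parts carry the parity signal. This is exactly where the restrictions on $p/q$ and $g(K)$ do the real work.
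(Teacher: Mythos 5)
Your skeleton (single-parity support of $HF_{red}$ of a Seifert fibred space from the plumbing computation \cite{OSzPlumbed}, transferred through the mapping cone, converted into signs of torsion coefficients, then evaluated at the top Alexander grading) is the right one; note the paper does not reprove Wu's theorem (it is quoted from \cite{wuCones}), but its own refinement, Theorem \ref{thm:seifert}, is proved exactly along these lines. However, your second paragraph contains a genuine gap, in fact two. First, the identity you rely on is wrong: by Lemma \ref{lemma:euler} one has $t_k(K) = V_k + \chi(\hAr)$, not $t_k(K) = \pm\chi\bigl(\hAr\bigr)$. In the positively oriented case the omission is harmless, since $V_k \geq 0$ and $\chi(\hAr) = \dim \hAr \geq 0$ both already have the correct sign; but in the negatively oriented case $\chi(\hAr) \leq 0$ can be overwhelmed by a positive $V_i$, and the entire function of the hypothesis $0<p/q<3$ is to kill these $V_i$'s: a negatively oriented Seifert fibred space bounds a negative-definite four-manifold, and a $d$-invariant/lattice inequality of Greene--McCoy type (in the paper, $2\widetilde{g} \leq \ceil{p/q} - \sqrt{\ceil{p/q}}$ with $\widetilde{g}$ the least index with $V_{\widetilde{g}}=0$, via \cite{mccoy}, \cite{greeneCabling}) then forces $V_i = 0$ in the relevant range. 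Your explanation of the slope restriction --- ``keeping the cone short so that windows of indices do not overlap destructively'' --- is not what happens; correspondingly, the hypothesis $2g(K)-1 > p/q$ serves to guarantee $V_{g(K)-1} = 0$, so that $\widehat{HFK}(K,g(K)) \cong HF^+(S^3_0(K),g(K)-1) \cong \boldsymbol{A}^{red}_{g(K)-1}$ with no $\tau(V_{g(K)-1})$ summand, rather than to ``protect the top grading from the cone''.

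Second, the mechanism you propose for isolating individual $t_i$'s --- writing $\chi(HF_{red})$ as a signed sum over the cone and peeling off coefficients by downward induction --- cannot work and is not needed. As you yourself worry, Euler characteristics of sums genuinely do not determine the parities or signs of individual summands, and no induction on these numerical identities repairs that. The correct tool is Proposition \ref{prop:hfipos} (Ni--Zhang \cite{niZhangChar}): for $p/q>0$ there is a splitting $\ker(\hD) \cong \ker(\hDT) \oplus \hAAr$ of \emph{absolutely graded} vector spaces, so each individual $\hAr$ sits inside $HF_{red}(Y)$ as a graded subgroup and directly inherits the parity dictated by the orientation (this is Lemma \ref{lemma:this} in the paper); no isolation argument is required. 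Moreover, in the negatively oriented case the summands $\tau(V_{\cdot})$ of $\ker(\hDT)$ described in Corollary \ref{cor:dtpos} lie in even parity and hence must vanish outright --- a second constraint on the $V$'s that your proposal never exploits. With the splitting and the vanishing of the relevant $V_i$'s in hand, the sign statements follow at once from $t_i = V_i + \chi(\hAr)$, and the top-degree statements follow from $a_{g(K)} = t_{g(K)-1} = \chi(\boldsymbol{A}^{red}_{g(K)-1})$ together with genus detection, exactly as in your final paragraph.
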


We are able to prove the following.

\begin{theorem}
Let $K \subset S^3$ be a knot. Suppose there is a rational number $p/q > 0$ such that $Y = \KS$ is a negatively oriented Seifert fibred space. Then

\begin{itemize}
\item $U^{g(K)}\cdot HF_{red}(Y) = 0$;
\item if $0<p/q\leq 3$, then all the torsion coefficients $t_i(K)$ are non-positive (including $t_0(K)$) and $\deg \Delta_K = g(K)$;
\item more generally, if $i\geq \floor{\frac{\ceil{p/q}-\sqrt{\ceil{p/q}}}{2}}$, then $t_i$ is non-positive;
\item if $g(K) > \floor{\frac{\ceil{p/q}-\sqrt{\ceil{p/q}}}{2}}$, then $\deg \Delta_K = g(K)$;
\item if $U^{\floor{|H_1(Y)|/2}}\cdot HF_{red}(Y) \neq 0$ then $\deg \Delta_K = g(K)$.
\end{itemize}

In all statements where $\deg \Delta_K = g(K)$ we have that $\widehat{HFK}(K,g(K))$ is supported in odd degrees.
\label{thm:seifert}
\end{theorem}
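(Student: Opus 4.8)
The plan is to extract $HF_{red}(Y)$ from the mapping cone formula and to convert the geometric hypothesis that $Y=\KS$ is a negatively oriented Seifert fibred space into two algebraic inputs about \cfk{}: a parity statement and a vanishing statement. For the parity, I would use the structure theory for such spaces (\cite{OSzSeifFibr}, and the mechanism behind Theorem \ref{thm:wu}): after reversing orientation a negatively oriented Seifert fibred rational homology sphere bounds a negative-definite plumbing with at most one bad vertex, and for such manifolds $HF_{red}(Y,i)$ is supported in odd grading in every \Sp. In the mapping cone each reduced summand $\hAr$ that contributes to $HF_{red}(Y)$ is shifted by a $d$-invariant of $L(p,q)$, so I would check these shifts respect parity, whence the odd support of $HF_{red}(Y)$ forces each contributing $\hAr$ into odd grading; in particular $\chi(\hAr)\le 0$.

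For the first bullet, the mapping cone expresses $HF_{red}(Y)$ in terms of the pieces $\hAr$ (only $|k|<g(K)$ occur, as $\hAr=0$ otherwise) together with a ``surplus'' tower contribution arising from the cokernel of $\hB$ on the towers $\hAT$, controlled by the local invariants $V_k$. Each $\hAr$ is annihilated by $U^{g(K)}$, being the reduced homology of a large surgery whose associated subquotient of \cfk{} has Alexander width at most $g(K)$. The extra factor $U^{\ceil{g_4(K)/2}}$ in Theorem \ref{thm:genus} appears precisely because in general a reduced class can be linked by the differential and the $U$-action to the surplus tower, raising its $U$-torsion order; here the odd parity of $HF_{red}(Y)$ against the even-graded surplus obstructs this linkage, so $HF_{red}(Y)$ splits off the surplus and reduces to a sum of the $\hAr$, giving the sharp $U^{g(K)}\cdot HF_{red}(Y)=0$.

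For the torsion coefficients and the genus, I would use an identity of the form
\[ t_i(K)=V_{|i|}+\chi\big(\hAr\big), \]
relating the torsion coefficient to the local invariant $V_{|i|}$ and the Euler characteristic of the reduced piece (it specialises to $t_i=V_i$ for $L$-space knots, where $\hAr=0$, consistently with Theorem \ref{thm:wu}). The parity input gives $\chi(\hAr)\le 0$, so it remains to force $V_{|i|}=0$ in the stated range. This is where negative orientation enters quantitatively: comparing $d(Y,i)$ with $d(L(p,q),i)$ over all $\approx\ceil{p/q}$ \Sps{} and using that negative orientation makes the $d$-invariants extremal, a counting argument yields $V_i=0$ for $i\ge\floor{\frac{\ceil{p/q}-\sqrt{\ceil{p/q}}}{2}}$, hence $t_i\le 0$ there; for $0<p/q\le 3$ the threshold is $0$, which covers $t_0$. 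For the genus, $\widehat{HFK}(K,g(K))\neq 0$ always, and I would match its parity with that of the top reduced piece $\boldsymbol{A}^{red}_{g(K)-1}(K)$, so that odd support forces $\widehat{HFK}(K,g(K))$ into odd degree; then $a_{g(K)}=\chi(\widehat{HFK}(K,g(K)))\neq 0$, i.e.\ $\deg\Delta_K=g(K)$, whenever $g(K)$ exceeds the threshold. The last bullet is the contrapositive count: bounding the $U$-torsion order of $HF_{red}(Y)$ by a quantity depending on $\deg\Delta_K$ and $|H_1(Y)|=p$ shows that the order $\floor{|H_1(Y)|/2}$ is attainable only when $\deg\Delta_K=g(K)$.

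The main obstacle is the translation carried out in the first paragraph: turning ``negatively oriented Seifert fibred'' into both the odd-parity support of $HF_{red}(Y)$ and the quantitative vanishing of the $V_i$, while matching the $\mathbb{Q}$-gradings of the mapping cone (shifted by lens-space $d$-invariants) with those of the $\hAr$ so that the parities genuinely line up in each \Sp. Making the threshold $\floor{\frac{\ceil{p/q}-\sqrt{\ceil{p/q}}}{2}}$ sharp, rather than settling for a cruder linear bound, is the delicate point, and the $\mathrm{Spin}^c$ counting argument that produces the square-root saving is the technical heart of the proof.
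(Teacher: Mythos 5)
Your outline reproduces the paper's proof in its structural respects: the odd $\ZZ/2\ZZ$-support of $HF_{red}(Y)$ comes from \cite[Corollary 1.4]{OSzPlumbed} together with orientation reversal, and it transfers to each $\hAr$ simply because Proposition \ref{prop:hfipos} exhibits $\hAr$ as an absolutely graded subgroup of $\HFFr$ (no separate check that the lens-space shifts respect parity is needed); the even-graded truncated towers $\tau(V_{\cdot})$, $\tau(H_{\cdot})$ of Corollary \ref{cor:dtpos} are then forced to vanish, and rerunning the proof of Theorem \ref{thm:genus} with Lemma \ref{lemma:As-genus} gives the sharp $U^{g(K)}\cdot HF_{red}(Y)=0$, exactly as in the paper; the torsion coefficients are handled through $t_k(K)=V_k+\chi(\hAr)$ (Lemma \ref{lemma:euler}); and the top coefficient through $a_{g(K)}=t_{g(K)-1}=\chi(\boldsymbol{A}^{red}_{g(K)-1}(K))$ with $\boldsymbol{A}^{red}_{g(K)-1}(K)\cong HF^+(S^3_0(K),g-1)\cong\widehat{HFK}(K,g(K))\neq 0$ once $V_{g(K)-1}=0$. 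The last bullet also follows, as you indicate, by combining the first and fourth bullets, since $\widetilde{g}\leq\frac{\ceil{p/q}-\sqrt{\ceil{p/q}}}{2}<\frac{p+1}{2}\leq g(K)$ when $U^{\floor{|H_1(Y)|/2}}\cdot HF_{red}(Y)\neq 0$.

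The genuine gap is the quantitative step you yourself call the technical heart: establishing $V_i=0$ for $i\geq\floor{\frac{\ceil{p/q}-\sqrt{\ceil{p/q}}}{2}}$. Your proposed mechanism --- comparing $d(Y,i)$ with $d(L(p,q),i)$ over the \Sps{} and invoking that ``negative orientation makes the $d$-invariants extremal'' --- does not work as stated: the correction-term formula \eqref{d-inv} only says $2\max\{V_{\floor{i/q}},H_{\floor{(i-p)/q}}\}=d(L(p,q),i)-d(Y,i)$, and no counting of these $p$ differences produces the square-root saving; it cannot even rule out all the $V_i$ being positive. What the paper actually uses is that a negatively oriented Seifert fibred space bounds a negative-definite $4$-manifold with \emph{torsion-free} first homology (read off from the plumbing description), fed into Greene's theorem \cite[Theorem 1.1]{greeneCabling} as adapted by McCoy \cite[Lemma 2.3]{mccoy}: writing $\widetilde{g}=\min\{i:V_i=0\}$ and $n=\ceil{p/q}$, one has $2\widetilde{g}\leq n-\sqrt{n}$. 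That result is proved by gluing the filling to the reversed surgery trace and applying Donaldson's diagonalisation theorem together with a lattice-embedding estimate --- machinery entirely absent from your sketch, as is the torsion-free-$H_1$ hypothesis it requires. Since the $V_i$ form a non-increasing sequence, once this threshold is imported (rather than rederived by your counting argument) the rest of your outline closes up and coincides with the paper's proof.
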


After the proof of Theorem \ref{thm:genus} in Section \ref{sec:genus}, we describe negatively oriented Seifert fibred spaces $Y$ for which the power of $U$ needed to annihilate $HF_{red}(Y)$ gets arbitrarily large compared to the order of the first homology group.

Theorem \ref{thm:seifert} combined with the result of Wu has the following straightforward corollary.

\begin{cor}
Suppose $Y = \KS$ is a Seifert fibred rational homology sphere. If $|H_1(Y)| \leq 3$, then all the torsion coefficients of $K$ have the same sign and $\deg \Delta_K = g(K)$.
\label{cor:seifert_small_H1}
\end{cor}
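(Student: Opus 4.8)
The plan is to read off the conclusion directly from Wu's Theorem~\ref{thm:wu} and Theorem~\ref{thm:seifert}, once the surgery slope has been normalised. The point of departure is that $p/q$ surgery on a knot in $S^3$ yields $H_1(\KS) \cong \mathbb{Z}/p\mathbb{Z}$, so the hypothesis $|H_1(Y)| \le 3$ is exactly the arithmetic constraint $|p| \le 3$. Since $Y$ is a rational homology sphere we have $p \neq 0$, so $|p| \in \{1,2,3\}$.

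The first real step is to reduce to a positive slope bounded by $3$. If $p/q < 0$ I would replace $K$ by its mirror $m(K)$ and $Y$ by $-Y$: using $S^3_{-p/q}(m(K)) = -\KS$, the manifold $-Y$ is again a Seifert fibred rational homology sphere, now obtained by the positive surgery $|p|/q$ on $m(K)$. Because the (symmetrised) Alexander polynomial is unchanged under mirroring, both the genus $g(K)$ and every torsion coefficient $t_i(K)$ are preserved, so it is enough to prove the statement for $m(K)$. Hence I may assume $p/q > 0$, and then the crucial inequality
$$
\frac{p}{q} = \frac{|p|}{q} \le |p| \le 3
$$
holds because the surgery convention gives $q \ge 1$. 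Thus $0 < p/q \le 3$.

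With this in hand I would split on the orientation of the Seifert fibration. If $Y$ is a positively oriented Seifert fibred space, Theorem~\ref{thm:wu} gives that every $t_i(K)$ is non-negative and that $\deg \Delta_K = g(K)$. If instead $Y$ is negatively oriented, then because $0 < p/q \le 3$ the second bullet of Theorem~\ref{thm:seifert} applies and shows that every $t_i(K)$, including $t_0(K)$, is non-positive and again $\deg \Delta_K = g(K)$. In either case all torsion coefficients share one sign and the Alexander degree equals the genus, which is precisely the assertion.

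The only delicate point — and the step I would check most carefully — is the orientation-and-mirror bookkeeping in the reduction, together with the elementary inequality $p/q \le |p|$ that converts the homological hypothesis $|H_1(Y)| \le 3$ into the slope hypothesis $0 < p/q \le 3$ required by the second bullet of Theorem~\ref{thm:seifert}. Once these are in place the result is an immediate consequence of the two cited theorems, which is why it is a straightforward corollary.
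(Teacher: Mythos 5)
Your proof is correct and is exactly the argument the paper intends when it calls this a ``straightforward corollary'': since $|H_1(\KS)| = |p|$, mirroring reduces to $0 < p/q \le |p| \le 3$, after which Theorem \ref{thm:wu} handles the positively oriented case and the second bullet of Theorem \ref{thm:seifert} the negatively oriented one. Your bookkeeping (invariance of $g$, $\Delta_K$ and the $t_i$ under mirroring, and $q \ge 1$) is all sound, so there is nothing to add.
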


To prove Theorems \ref{thm:alternating} and \ref{thm:genus} we need to study the mapping cone formula, which connects the Heegaard Floer data of the knot with the Heegaard Floer homology of the manifolds obtained by surgery on it. Given a knot $K$ in $S^3$ there is a doubly-filtered complex $C = $ \cfk \ associated to it. The doubly-filtered homotopy type of this complex is a knot invariant, from which all the flavours of knot Floer homology are derived.

In fact, the mapping cone formula states that given $C$ and a certain chain homotopy equivalence which identifies $C\{i\geq 0\}$ with $C\{j\geq 0\}$ we can determine $\HFF$ completely for any rational $p/q$.

In Section \ref{sec:calculations} we derive an explicit description of $\HFF$ as an absolutely graded vector space in terms of homological data from \cfk, with no reference to the chain homotopy equivalence mentioned above. For a large part this has already been done (\cite{niZhangChar}, \cite{niWu}, \cite{OSzRatSurg}), but the results are scattered across multiple papers, sometimes not in explicit form, and we consider it useful to have them collected in one place. While all the results of this section concerning positive surgeries have been shown before, as far as we are aware, the results for negative and zero surgeries (contained in subsections 3.2 and 3.3 respectively) are new.

This allows us to derive some other applications as well, a few of which we mention here.

\begin{theorem}
Suppose $K$ is a non-trivial knot and $Y = \KS$. Then 
$$
|q| \leq |H_1(Y)| + \dim(HF_{red}(Y)).
$$
\label{thm:q-bound}
\end{theorem}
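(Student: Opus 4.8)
The plan is to extract the bound $|q| \leq |H_1(Y)| + \dim(HF_{red}(Y))$ directly from the mapping cone formula, using the explicit homological description of $\HFF$ that the paper develops. Recall that for $p/q$ surgery the manifold $Y = \KS$ has $|H_1(Y)| = |p|$, so the statement is really $|q| \leq |p| + \dim(HF_{red}(Y))$. First I would recall the structure of the mapping cone $\mathbb{X}^+_{i,p/q}$: it is built from infinitely many copies of the hat-level complexes $\boldsymbol{A}^+_s(K)$ (in the columns indexed by the integers) and copies of $\boldsymbol{B}^+ \cong \mathcal{T}^+$, connected by the vertical maps $\cv$ and horizontal maps $\ch$. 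The key numerical input is that for each fixed $\mathrm{Spin}^c$-structure $i$ on $Y$, exactly $q$ of the $\boldsymbol{A}^+$-columns contribute per ``period'' determined by $p/q$, reflecting the fact that the $q$ denominators of the surgery coefficient govern how many $\boldsymbol{A}$-complexes feed into a single $\mathrm{Spin}^c$-structure.

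The core of the argument is a counting/rank comparison in the mapping cone. For all but finitely many values of $s$, the map $\cv$ or $\ch$ from $\boldsymbol{A}^+_s(K)$ is an isomorphism onto the tower $\mathcal{T}^+$ (this is the standard stabilisation of $A^+_s(K) \to B^+$ for $|s|$ large, where the knot complex looks trivial), so those columns contribute nothing to the homology of the cone beyond the surgery formula's tower structure. The plan is therefore to localise attention to the ``reduced'' part: one should identify, for the given $\mathrm{Spin}^c$-structure $i$, how the $q$ relevant strands interact, and show that the tower $\mathcal{T}^+$ structure of $\HFF$ can account for at most $|p|$ worth of the ``free'' generators, so that any excess of the $q$ incoming strands over $|p|$ must be detected in $HF_{red}(Y)$. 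Concretely, I would compare the total number of $\boldsymbol{B}^+$ summands to the number of $\boldsymbol{A}^+$ summands feeding a single $\mathrm{Spin}^c$-structure and track the cokernels/kernels of the truncated connecting maps.

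More precisely, I expect the following mechanism to do the work. Summing over all $|p|$ $\mathrm{Spin}^c$-structures, the mapping cone computes $\HFF$ as the homology of a two-step complex $\bigoplus \boldsymbol{A}^+ \to \bigoplus \boldsymbol{B}^+$ whose kernel contributes the towers and whose failure to be surjective (or injective) measures $HF_{red}$. A dimension count of the truncated version of this complex, after quotienting by the $\mathbb{F}[U]$-tower part, shows that each of the $|q|$ strands per period that does \emph{not} simply cancel against a $\boldsymbol{B}^+$ via an isomorphism must leave behind at least a one-dimensional $U$-torsion contribution to $\HFFr$. Balancing $|q|$ strands against $|p|$ towers then gives $|q| - |p| \leq \dim(HF_{red}(Y))$, which rearranges to the claim; the hypothesis that $K$ is non-trivial is needed precisely to rule out the degenerate case where all connecting maps are isomorphisms (which happens exactly for the unknot, where $\dim HF_{red} = 0$ forces $|q| \leq |p|$, failing for $|q| > |p|$ unless one knows the surgery is genuinely a lens space).

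The main obstacle will be making the rank/dimension bookkeeping in the truncated mapping cone rigorous and uniform in the sign of $p$ and $q$ and across all $\mathrm{Spin}^c$-structures simultaneously. One has to choose a truncation level large enough that the stabilised columns are genuinely isomorphisms, verify that the truncation does not alter $HF_{red}$, and then carefully match the $\cv$ and $\ch$ maps so that no cancellation is double-counted; handling $p<0$ and $q<0$ (where the roles of the two connecting maps and the orientation of the strands flip) without sign errors is the delicate part. I would expect to reduce everything to the case $p,q>0$ first by an orientation-reversal argument, then push through the combinatorics of which $\boldsymbol{A}$-columns survive.
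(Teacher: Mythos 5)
Your proposal is correct and takes essentially the same route as the paper: the truncated mapping-cone bookkeeping you outline (each $\boldsymbol{A}_k$ appearing $q$ times, with exactly one torsion summand per \Sp\ absorbed into a tower) is precisely what the paper has already packaged as the rank formula of Proposition \ref{prop:rankpos}, from which the theorem follows by bounding the $p$ absorbed terms $\max\bigl(V_{\floor{i/q}},H_{\floor{(i-p)/q}}\bigr)$ by $pV_0$ and using that non-triviality forces $\delta(K)+V_0\geq 1$. Your identification of where non-triviality enters (all connecting maps are isomorphisms only for the unknot, i.e.\ $\delta(K)=V_0=0$) is exactly the paper's key step, so carrying out your plan amounts to re-deriving Proposition \ref{prop:rankpos} before performing the same arithmetic.
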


The existence of such a bound for the denominator seems to be known to some experts in Heegaard Floer homology (it could be deduced from \cite[Proposition 9.6]{OSzRatSurg}) but we have not seen it explicitly stated. We will use this fact in the proof of Theorem \ref{thm:alternating}.

A bound on the number of slopes that produce a given manifold has also been obtained in \cite[Theorem 2.9]{lackenbyDehnSurg} in quite a general but somewhat different setting (in particular due to homological conditions it does not deal with surgeries in $S^3$).

\begin{theorem}
Let $K$ be an $L$-space knot and $p/q \leq 1$ a rational number. Then $\KS$ and $p/q$ determine the Alexander polynomial of $K$.
\label{Lsurg}
\end{theorem}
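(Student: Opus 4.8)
The plan is to use the $L$-space property to collapse the mapping cone into a purely combinatorial object built from one sequence of integers, and then to invert the resulting formula for $\HFF$. First I would record the structure of \cfk\ for an $L$-space knot $K$: such a complex is chain homotopy equivalent to a staircase, so each $\cA$ has $H_*(\cA)\cong\Ta$ and both maps $\cv,\ch\colon\cA\to\cB$ are, on homology, multiplication by powers of $U$, say $\cv=U^{V_k}$ and $\ch=U^{H_k}$. The integers $V_k$ form a non-increasing sequence of non-negative integers with $V_k-V_{k+1}\in\{0,1\}$ that vanishes for $k\geq g(K)$; they obey the conjugation symmetry $H_k=V_{-k}=V_k+k$, and for $k\geq 0$ they coincide with the torsion coefficients, $V_k=t_k(K)$. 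I would then note that $\{t_k\}_{k\geq 0}$ determines $\Alex$: for $k\geq 1$ one recovers the coefficient $a_k$ of $t^k$ as the second difference $a_k=t_{k-1}-2t_k+t_{k+1}$, and the remaining coefficient $a_0$ is fixed by the normalisation $\Delta_K(1)=1$. Hence it suffices to recover $t_0,t_1,t_2,\dots$ (equivalently the $V_k$, $k\geq 0$) from $\HFF$ and $p/q$.

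Next I would feed this data into the explicit mapping cone formula established earlier in the paper. The hypothesis $p/q\leq 1$ enters decisively here: when $p\leq q$ the index $\floor{(i+pk)/q}$ of the $A$-summands increases by $0$ or $1$ as $k$ increases by $1$, so as $k$ ranges over $\ZZ$ every integer occurs as an index, and hence every $V_k$ genuinely contributes to the complex. (For $p/q>1$ some $\cA$ would be skipped and the corresponding $V_k$ would be invisible, which is exactly why the bound is needed.) Because all the maps are powers of $U$ between towers, the truncated complex is combinatorial. Fixing a \Sp\ $i$, the complex computing $\HF$ splits as a single surviving tower together with a reduced summand $\HFr$; the tower records the $d$-invariant, while $\HFr$ is a direct sum of cyclic $\FR$-modules whose gradings and $U$-orders are explicit functions of the $V_k$ and of the grading shifts $\dU$.

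The crux of the argument, and the step I expect to be the main obstacle, is to show that this assignment $\{V_k\}\mapsto\HFF$ is injective: that the absolutely graded isomorphism type of $\HFF$, together with the value of $p/q$, pins down each individual $V_k$ rather than merely some aggregate of them. Knowing $p/q$ is precisely what lets us compute the lens space corrections $\dU$ and fix the absolute grading normalisation of the mapping cone, so that each cyclic summand of $\HFr$ can be attributed to a definite index $k$; the spreading-out guaranteed by $p/q\leq 1$ is what keeps these contributions in distinct gradings and prevents cancellation between neighbouring $A$- and $B$-towers. Carrying out this bookkeeping carefully should yield $t_k=V_k$ for all $k\geq 1$ from $\HFr$, while $V_0=t_0$ is read off from the $d$-invariants.

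Finally I would substitute the recovered sequence into the second-difference formula to obtain $a_k$ for $k\geq 1$, and fix $a_0$ by $\Delta_K(1)=1$, thereby reconstructing $\Alex$ from $\KS$ and $p/q$ alone. The only delicate points are the grading computation in the penultimate paragraph and verifying that no two distinct admissible $V$-sequences produce the same graded module in this slope range; everything else is a direct translation of the staircase structure through the surgery formula.
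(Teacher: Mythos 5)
You follow the same route as the paper---reduce to recovering the torsion coefficients (your second-difference formula $a_k=t_{k-1}-2t_k+t_{k+1}$ plus normalisation is fine), feed the $L$-space structure $\hA\cong\Ta$ into the explicit surgery computations, observe that $p/q\leq 1$ makes every $V_k$ visible, and read $t_0$ from the $d$-invariants at slope $1$---but the step you yourself flag as the crux is a genuine gap, and the mechanism you sketch for closing it is not the right one. You propose to prove injectivity of $\{V_k\}\mapsto HF^+$ by attributing each cyclic summand of $\HFFr$ to a definite index $k$ via absolute gradings, claiming $p/q\leq 1$ ``keeps these contributions in distinct gradings.'' That claim is never argued and is not available in general: each order $V_k$ recurs across the $|p|$ \Sps\ (and on both the $V$- and $H$-sides of the zig-zag), equal-order summands from different indices can sit in coincident gradings, and an abstract isomorphism of graded modules carries no labels attaching summands to indices. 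The paper closes this step with no grading bookkeeping at all. For $0<p/q\leq 1$ one checks $\floor{i/q}=0$ and $\floor{(i-p)/q}=-1$, so every \Sp\ falls into the first case of Corollary \ref{cor:dtpos}; summing over $i$ and $n\geq 1$, the multiset of $U$-orders of the cyclic summands of $\HFFr$ consists of each $t_k$ with $k\geq 1$ exactly $2q$ times (using $H_{-k}=V_k$) and of $t_0$ exactly $q-p$ times, with the analogous count for negative slopes coming from Lemma \ref{lemma:dtneg}. Since $p/q$ is known, the multiplicities are known; and since $(V_k)_{k\geq 0}$ is \emph{non-increasing}, the unordered multiset already determines the ordered sequence. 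You recorded monotonicity in your first paragraph but never used it---it is precisely the missing idea, and it replaces the delicate graded-module injectivity you defer.

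Two smaller points. First, the hypothesis $p/q\leq 1$ includes the slope $0$, where your description of $HF^+$ as ``a single surviving tower together with a reduced summand'' fails ($b_1(S^3_0(K))=1$, so there are two towers); the paper treats this case separately via Proposition \ref{zerosurg}, from which the statement is immediate for $L$-space knots. Second, your fallback to the $d$-invariants for $t_0$ at slope $1$ is correct and matches the paper: in the range $0<p/q\leq 1$ formula \eqref{d-inv} reads $d=\dU-2V_0$, since $\max\{V_0,H_{-1}\}=\max\{t_0,t_1\}=t_0=V_0$, so $t_0$ is recovered from any correction term once $p/q$ (hence $\dU$) is known.
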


We remark that the Alexander polynomial determines the knot Floer homology of an $L$-space knot \cite{OSzLensSpaceSurg}. Conversely, the Alexander polynomial of an $L$-space knot determines the $HF^+$ of all surgeries on it. In particular, if for two $L$-space knots $K$ and $K'$ we have that $HF^+(S^3_{p/q}(K)) \cong HF^+(S^3_{p/q}(K'))$ for some $p/q \leq 1$, then $HF^+(S^3_{p'/q'}(K)) \cong HF^+(S^3_{p'/q'}(K'))$ for all other $p'/q'$.

This theorem also implies that if two torus knots $T_{r,s}$, $T_{r',s'}$ with $r,s,r',s'>0$ have the same surgery with the same slope $\leq 1$, then they are the same. Presumably, this can also be obtained by more elementary methods. Note, however, that there do exist positive integral slopes for which there are two distinct torus knots with the same surgeries at these slopes \cite[Example 1.1]{niZhangChar}.

In \cite{teragaito} Teragaito constructs a small Seifert fibred space $Y$ and a sequence of knots $K_n \subset S^3$ such that $K_n(-4) = Y$.\footnote{In fact, Teragaito constructs $-Y$, his knots are the mirror images of $K_n$ and the slope he uses is $4$. It is more convenient for us to work with this orientation.} Moreover, the genus of the knots $K_n$ is unbounded. Incidentally, this shows that we cannot hope for an upper bound on knot genus for knots giving some arbitrary manifold by surgery. In section \ref{sec:other} we show that $Y$ can only be obtained by $(-4)$-surgery and we find the Alexander polynomial of the knots $K_n$. It is, in fact, possible to find the Heegaard Floer homology of all manifolds obtained by surgery on each $K_n$.

The organisation of this paper is as follows. In section \ref{sec:mapping_cone} we review some definitions, notation and the mapping cone formula. In section \ref{sec:calculations} we derive the expression for the Heegaard Floer homology of surgeries on a knot. In section \ref{sec:alternating} we prove Theorem \ref{thm:alternating}, in section \ref{sec:genus} we prove Theorem \ref{thm:genus} and in section \ref{sec:seifert} we prove Theorem \ref{thm:seifert}. Finally, in section \ref{sec:other} we present some other applications of the mapping cone formula to questions in Dehn surgery.

\section*{Acknowledgements}

I would like to thank Tye Lidman, Jake Rasmussen, Andr\'as Juh\'asz, Marc Lackenby and Duncan McCoy for their very valuable suggestions and comments on the earlier drafts of this paper (Tye Lidman, in particular, suggested that Theorem \ref{thm:alternating} could be proven using techniques of this paper). I am also very grateful to Marco Marengon, Tom Hockenhull and V.S.Pyasetkii for many important comments on the structure and presentation of this paper. This paper greatly benefited from a visit to the University of Texas at Austin, and many interesting and enlightening conversations that I had there. For this opportunity, I am very thankful to the Doris Chen Award, and the help of my supervisor Dorothy Buck. I am particularly grateful to Dorothy for her continued encouragement and support over the course of my Ph.D. studies. Finally, I
would like to thank the referee for helpful comments on the drafts of this paper.

\section{The mapping cone formula}
\label{sec:mapping_cone}

In this section, we set up notation and review the rational surgery formula of Ozsv\'ath and Szab\'o \cite{OSzRatSurg}. We largely follow the exposition and notation of Ni and Wu in \cite{niWu}.

Given a knot $K$ in $S^3$ we can associate to it a doubly-filtered complex \linebreak $C = CFK^{\infty}(K)$. We denote generators of this complex by $[\boldsymbol{x}, i , j]$, where this generator has filtration $(i,j) \in \ZZ \times \ZZ$. By \cite[Lemma 4.5]{rasmussenThesis} the complex $C$ is homotopy equivalent (as a filtered complex) to a complex for which all filtration-preserving differentials are trivial. In other words, at each filtration level we replace the group, viewed as a chain complex with the filtration preserving differential, by its homology. From now on we work with this, \slshape reduced \upshape complex.

The complex $C$ is invariant under the shift by the vector $(-1,-1)$. There is an action of a formal variable $U$ on $C$ which is simply the translation by the vector $(-1,-1)$. In other words, the group at the filtration level $(i,j)$ is the same as the one at the filtration level $(i-1, j-1)$ and $U$ is the identity map from the first one to the second. Of course, $U$ is a chain map. In $C$ the map $U$ is invertible (but note that it will not be in various subcomplexes and quotients), so $C$ is an $\mathbb{F}[U, U^{-1}]$-module.

This means that as an $\mathbb{F}[U, U^{-1}]$-module $C$ is generated by the elements with the first filtration level $i = 0$. In the reduced complex the group at filtration level $(0, j)$ is denoted $\widehat{HFK}(K,j)$ and is known as the knot Floer homology of $K$ (at Alexander grading $j$).

The complex $C$ is absolutely $\ZZ$-graded. In fact, the complex $C$ is the complex used to compute the ($\infty$-flavour of the) Heegaard Floer homology of $S^3$, the knot provides an additional filtration for it. By grading the Heegaard Floer homology of $S^3$ we obtain the grading on $C$. The map $U$ decreases this grading by 2.

Using the filtration on $C$ we can define the following quotients of it.
$$
\cA = C\{i\geq 0 \mbox{ or } j \geq k\}, \mbox{\ \ } k \in \mathbb{Z}
$$
and
$$
\cB = C\{i \geq 0\} \cong CF^+(S^3).
$$

We also define two chain maps, $\cv, \ch \co \cA \to \cB$. The first one is just the projection (i.e. it sends to zero all generators with $i < 0$ and acts as the identity map for everything else). The second one is the composition of three maps: firstly we project to $C\{j \geq k\}$, then we multiply by $U^k$ (this shifts everything by the vector $(-k,-k)$) and finally we apply a chain homotopy equivalence that identifies $C\{j \geq 0\}$ with $C\{i \geq 0\}$. Such a chain homotopy equivalence exists because the two complexes both represent $CF^+(S^3)$ and by general theory \cite{OSzKnotInv} there is a chain homotopy equivalence between them, induced by the moves between the Heegaard diagrams. We usually do not know the explicit form of this chain homotopy equivalence.

Genus detection alluded to before has the following form \cite[Theorem 1.2]{OSzGenusBounds}.

\begin{theorem}[Ozsv\'ath-Szab\'o]
Let $K \subset S^3$ be a knot. 

Then $g(K) = \max\{j \in \ZZ | \widehat{HFK}(K,j) \neq 0\}$.
\label{thm:genus_detect}
\end{theorem}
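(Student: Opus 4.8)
The plan is to prove the two inequalities separately. For the easy direction, $\max\{j : \widehat{HFK}(K,j) \neq 0\} \leq g(K)$, I would use the construction of knot Floer homology from a Heegaard diagram subordinate to a minimal-genus Seifert surface. The Alexander grading of an intersection point can be computed as a relative $\mathrm{Spin}^c$ evaluation against such a surface, and a standard adjunction-type argument bounds this evaluation by the Euler characteristic of the surface, giving $|j| \leq g(K)$ already on the chain level and hence on homology. Equivalently, this direction follows from the fact that the symmetrised Alexander polynomial, whose coefficients are Euler characteristics of $\widehat{HFK}$, has degree at most $g(K)$; but since those Euler characteristics can cancel, the cleaner route is the filtration/adjunction bound, which controls the groups themselves rather than their Euler characteristics.

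The substance of the theorem is the reverse inequality, i.e.\ the nonvanishing $\widehat{HFK}(K, g(K)) \neq 0$. My approach would be to identify this top group with the sutured Floer homology of the sutured manifold obtained by cutting the knot complement $S^3 \setminus K$ along a minimal-genus Seifert surface. By Juh\'asz's decomposition and nonvanishing theorems, the sutured Floer homology of a \emph{taut} balanced sutured manifold is nonzero, and the required tautness is exactly the content of Gabai's theorem that a minimal-genus Seifert surface is taut in the complement. Combining these, the sutured Floer homology of the complementary sutured manifold is nonzero, and the identification with $\widehat{HFK}(K,g(K))$ then forces the top knot Floer group to be nonzero.

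The main obstacle is precisely this nonvanishing statement: it cannot be established by formal or algebraic manipulation of \cfk \ alone, since there is no purely homological mechanism preventing the top group from being zero. One genuinely needs geometric input --- either Gabai's construction of taut foliations and sutured hierarchies, or (as in the original argument of Ozsv\'ath and Szab\'o) the nonvanishing of the contact invariant associated to a tight contact structure produced by perturbing a taut foliation via Eliashberg--Thurston. In either formulation the crux is importing a deep three-dimensional topology input (tautness) and translating it, through the definition of the Floer chain complex, into the survival of a nonzero class in the top Alexander grading. Once that input is in hand, assembling the two inequalities into the stated equality is immediate.
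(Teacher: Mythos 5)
Your outline is correct, but note that the paper itself offers no proof of this statement: it is imported wholesale as \cite[Theorem 1.2]{OSzGenusBounds}, so the relevant comparison is with the original Ozsv\'ath--Szab\'o argument. Your easy direction (the adjunction-type bound on the Alexander grading from a diagram adapted to a Seifert surface, rather than the degree bound on $\Alex$, whose coefficients can cancel) is the standard one and is fine. For the nonvanishing $\widehat{HFK}(K,g(K)) \neq 0$, however, you take a genuinely different route from the cited source. Ozsv\'ath and Szab\'o first pass to the zero-surgery via the identification $\widehat{HFK}(K,g(K)) \cong HF^+(S^3_0(K),g-1)$ (\cite[Corollary 4.5]{OSzKnotInv} --- the same identification this paper invokes in Section \ref{sec:other}), then use Gabai's taut foliation on $S^3_0(K)$, its Eliashberg--Thurston perturbation to a weakly symplectically fillable contact structure, and the resulting nonvanishing in $HF^+$ of the zero-surgery; this is your parenthetical ``original argument.'' Your primary route --- identifying $\widehat{HFK}(K,g(K))$ with the sutured Floer homology of the complement of a minimal-genus Seifert surface and invoking Juh\'asz's decomposition and nonvanishing theorems together with Gabai's tautness --- is the later sutured-Floer proof, and it is also valid; one point in its favour, which your write-up slightly obscures, is that Juh\'asz's nonvanishing theorem is proved by running a Gabai sutured hierarchy down to a product sutured manifold (where $SFH \cong \FF$) and observing that $SFH$ of each decomposition is a direct summand of the previous one, so this route needs no contact geometry or Eliashberg--Thurston perturbation at all, only the existence of hierarchies for taut sutured manifolds. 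Either way, your central claim --- that the top-group nonvanishing cannot be extracted from formal manipulation of \cfk\ and requires tautness as geometric input --- is exactly right, and assembling the two inequalities is, as you say, immediate.
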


From this (together with symmetries of $C$) we can see that the maps $\cv$ (respectively $\ch$) are isomorphisms if $k \geq g$ (respectively $k \leq -g$).

We define chain complexes
$$
\cAA = \bigoplus_{n \in \mathbb{Z}}(n,\cAn), \ \cBB = \bigoplus_{n \in \mathbb{Z}}(n,\cB).
$$

The first entry in the brackets here is simply a label used to distinguish different copies of the same group. 
There is a chain map $\cD$ from $\cAA$ to $\cBB$ defined by taking sums of all maps $\cv, \ch$ with appropriate domains and requiring that the map $\cv$ goes to the group with the same label $n$ and $\ch$ increases the label by 1. Explicitly, $\cD(\{(k,a_k)\}_{k \in \ZZ}) = \{(k,b_k)\}_{k \in \ZZ}$ where $b_k = v^+_{\floor{\frac{i+pk}{q}}}(a_k)+h^+_{\floor{\frac{i+p(k-1)}{q}}}(a_{k-1})$.

Each of $\cA$ and $\cB$ inherits a relative $\ZZ$-grading from the one on $C$. Let $\MC$ denote the mapping cone of $\cD$. We fix a relative $\ZZ$-grading on the whole of it by requiring that the maps $\cv, \ch$ (and so $\cD$) decrease it by 1. The following is proven in \cite{OSzRatSurg}.

\begin{theorem}[Ozsv\'ath-Szab\'o]
\label{thm:mapping cone}
There is a relatively graded isomorphism of $\FR$-modules
$$
H_*(\MC ) \cong HF^+( \KS , i ).
$$
\end{theorem}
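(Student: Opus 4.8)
The plan is to deduce the formula from two more basic inputs — the \emph{large surgery formula} and the \emph{integer surgery formula} of \cite{OSzKnotInv, OSzRatSurg} — and then to promote the result from integer to rational slopes.

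First I would establish the large surgery formula: for every integer $N \geq 2g(K)-1$ and every $s$ there is an isomorphism $HF^+(S^3_N(K),[s]) \cong H_*(A^+_s(K))$, where $A^+_s(K) = C\{i \geq 0 \mbox{ or } j \geq s\}$. The argument is to choose a Heegaard diagram adapted to $N$-surgery in which $K$ runs many times through a single winding region; for $N$ large the fixed $\mathrm{Spin}^c$-structure $[s]$ pins the domains of the disks contributing to $\partial$ so tightly that the differential never leaves the portion of $C$ governed by the knot filtration, and $CF^+(S^3_N(K),[s])$ is then identified, as a relatively graded $\FR$-complex, with $A^+_s(K)$.

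Next I would recognise the maps $v_s$ and $h_s$ as the two families of maps induced on $HF^+$ by the two-handle cobordism $W$ from $S^3_N(K)$ to $S^3$: summing the cobordism map over the $\mathrm{Spin}^c$-structures on $W$, those with small pairing against the core yield, under the large-surgery identification, the projection $v_s$, while those with large pairing yield the flip map $h_s$ (project to $C\{j\geq s\}$, multiply by $U^s$, apply the homotopy equivalence $C\{j\geq 0\}\simeq C\{i\geq 0\}$). With these maps in hand the integer surgery formula is assembled from the surgery exact triangle relating $S^3$, $S^3_n(K)$ and the large surgeries: one encodes all of the relevant fillings in a single multi-pointed Heegaard (multi-)diagram and shows, by a count of holomorphic triangles, that after truncation the total differential on $\bigoplus_s A^+_s(K)\,\oplus\,\bigoplus_s B^+$ splits up to homotopy as the internal differentials together with $\sum_s(v_s+h_s)$; the mapping cone of this map then computes $HF^+(S^3_n(K))$.

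Finally, to reach an arbitrary slope $p/q$ I would glue a genus-one pattern for the $p/q$-filling of the knot complement onto a diagram for the complement. The generators of the resulting diagram are naturally indexed by a $\mathbb{Z}$-family of copies of the $A^+$'s and $B^+$'s, and tracking how the filling curve wraps $p$ times meridionally and $q$ times longitudinally through the winding region is exactly what produces the floors $\floor{\frac{i+pn}{q}}$ in the definition of $\cD$; the integer-surgery triangle count then applies to give $H_*(\MC)\cong\HF$, with all maps decreasing the relative grading by $1$ as normalised before the statement. The technical heart, and the main obstacle, is the holomorphic-polygon analysis underlying the integer case — proving that the assembled differential contains precisely the triangle maps $v_s,h_s$ and that no higher polygon contributions survive the truncation — after which the rational case adds the separate bookkeeping of constructing the winding diagram and matching the $\mathrm{Spin}^c$-structures to the floor functions.
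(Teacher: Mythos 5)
This theorem is quoted in the paper from \cite{OSzRatSurg} without proof, so your attempt must be measured against the proof in that source. Your first two steps do track it: the large surgery identification $HF^+(S^3_N(K),[s]) \cong H_*(A^+_s(K))$ is \cite[Theorem 4.4]{OSzKnotInv} (stated there for $N$ sufficiently large; the sharp bound $2g(K)-1$ is not needed), the recognition of $v_s$ and $h_s$ as the two extremal families of $\mathrm{Spin}^c$-summands of the two-handle cobordism map is exactly how Ozsv\'ath and Szab\'o set things up, and the integer surgery formula is indeed proved in \cite{OSzIntegerSurgeries} by identifying the maps in the surgery exact sequence with sums of $v_s$ and $h_s$ and then running an algebraic truncation argument.

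The genuine gap is your final step. The surgery exact triangle, and hence the ``integer-surgery triangle count,'' applies only to triples of fillings whose slopes are pairwise at distance one; the meridian $1/0$ and the slope $p/q$ have distance $q$, so for $q \geq 2$ there is no triangle to which your argument ``then applies,'' and no published argument extracts the floors $\floor{\frac{i+pn}{q}}$ from a direct holomorphic count on a winding-region diagram for the $p/q$-filling. What \cite{OSzRatSurg} actually does is purely a reduction to the integer case: $S^3_{p/q}(K)$ is realized as an \emph{integral} surgery on the connected sum $K \# O_{q/r}$, where $O_{q/r}$ is a Floer-simple knot in the lens space $L(q,r)$; one then invokes the integer surgery formula for null-homologous knots in arbitrary three-manifolds and computes $CFK^\infty(K \# O_{q/r})$ by the K\"unneth formula. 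Because $O_{q/r}$ is Floer simple, the groups $A^+_s(K \# O_{q/r})$ are identified with $\cAn$, each $A^+$ of $K$ appearing $q$ times --- this is where the floor functions and the $q$-fold repetition in $\cAA$ come from, not from diagrammatic bookkeeping of a curve wrapping $p$ times meridionally and $q$ times longitudinally. The alternative direct route, iterating the exact triangle along a continued-fraction (chain-link) presentation of the $p/q$-filling, would leave you with an iterated mapping cone that still has to be identified with $\MC$, and your sketch supplies neither that identification nor the connected-sum reduction. So the outline is faithful up through the integer formula but does not contain a viable mechanism for the rational case.
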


The index $i$ in $HF^+( \KS , i )$ stands for the \Sp. The numbering of \Sps\ we refer to is defined in \cite{OSzRatSurg}, but we do not need precise details of how to obtain this numbering for our purposes.

We can also determine the absolute grading on the mapping cone. The group $\cBB$ is independent of the knot. Now if we insist that the absolute grading on the mapping cone for the unknot should coincide with the grading of $HF^+$ of the surgery on it (i.e. $\dU$), this fixes the grading on $\cBB$. We then use this grading to fix the grading on $\MC$ for arbitrary knots -- this grading then is the correct grading, i.e. it coincides with the one $HF^+$ should have.

The mental picture we have of the mapping cone theorem is illustrated in Figure \nolinebreak \ref{cone_complex}. We have two rows of groups. The bottom row is just the row of identical groups $\cB$. The upper row consists of the various `hook' groups $\cA$. Specifically, if the surgery slope is $p/q$, in the upper row we repeat each group $q$ times. We then have vertical arrows pointing down for the maps $\cv$, and the arrows for the maps $\ch$ are slanted. More precisely, they go $p$ groups to the right (if $p$ is negative, this means $-p$ to the left). This creates $|p|$ subcomplexes, connected by a zig-zag set of arrows. Each such zig-zag subcomplex corresponds to a \Sp\ on the manifold that is the result of the surgery. To obtain the Heegaard Floer homology of this manifold we need to take the mapping cone of this chain map.

\begin{figure}
\includegraphics[scale=0.8, clip = true, trim = 65 725 60 0]{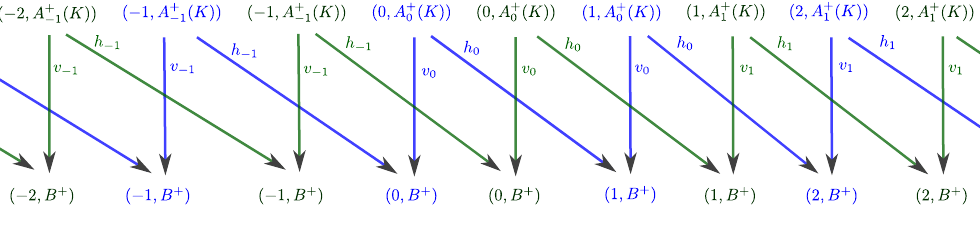}
\caption{Schematic representation of the portion of the complex, mapping cone of which gives the Heegaard Floer homology of the surgery on a knot. This case illustrates $\frac{2}{3}$-surgery, blue and green subcomplexes represent two different \Sps\ on the resulting space.}
\label{cone_complex}
\end{figure}

For our purposes, it suffices to pass to the homology of the mapping cone under consideration. Let $\hA = H_*(\cA ), \ \hB = H_*(\cB ), \ \hAA = H_*(\cAA), \ \hBB = H_*(\cBB)$ and let $\hv, \hh, \hD$ denote the maps induced by $\cv, \ch, \cD$ (respectively) in homology.

When we talk about $\hAA$ as an absolutely graded group, we mean the grading that it inherits from the absolute grading of the mapping cone that we described above.

Since $\cB \cong CF^+(S^3), \ \hB \cong \T$, where $\T \cong \mathbb{F}[U^{-1}] = \mathbb{F}[U, U^{-1}]/U\mathbb{F}[U]$, $d$ signifies the grading of 1 and multiplication by $U$ decreases the grading by 2. We sometimes call this module the \slshape tower. \upshape When we are not interested in the absolute grading we omit the subscript.

Recall that the short exact sequence
{\setlength\mathsurround{0pt}
\begin{center}
\begin{tikzcd}

0 \arrow{r}
&\cBB \arrow{r}{i}
&\MC \arrow{r}{j}
&\cAA \arrow{r}
&0
\end{tikzcd}
\end{center}
}
induces the exact triangle

{\setlength\mathsurround{0pt}
\begin{center}
\begin{equation}
\begin{tikzcd}
\hAA \arrow{r}{\hD}\arrow[leftarrow]{rd}{j_*}
&\hBB \arrow{d}{i_*}\\
&H_*(\MC ) \cong \HF.
\label{extriang}
\end{tikzcd}
\end{equation}
\end{center}
}

All maps in these sequences are $U$-equivariant. This triangle is the main tool in the calculations of the next section. In particular, if the surgery slope is positive, then the map $\hD$ will be surjective, so the triangle above implies that \linebreak $\HF \cong \ker(\hD)$.

\section{Calculations}
\label{sec:calculations}

In this section we want to use the mapping cone formula to calculate the Heegaard Floer homology for the results of surgery on a knot in $S^3$. Given a rational homology sphere $Y$ and $\mathfrak{s} \in$ \Sp, $\mbox{ } HF^+(Y, \mathfrak{s}) = \T \oplus HF_{red}(Y, \mathfrak{s})$, where $d = d(Y, \mathfrak{s})$ is called the \slshape correction term \upshape and $HF_{red}(Y, \mathfrak{s})$ is a finite-dimensional $\FR$-module annihilated by a big enough power of $U$, called the reduced Floer homology of $Y$ in \Sp\ $\mathfrak{s}$. The sum of these groups over all \Sps\ is called the reduced Floer homology of $Y$ and is denoted by $HF_{red}(Y)$.

We state a weaker version of \cite[Theorem 2.3]{OSzIntegerSurgeries}.

\begin{theorem}[Ozsv\'ath-Szab\'o]
\label{thm:large surgeries}
There is an integer $N$ such that for all $m \geq N$ and all $i \in \mathbb{Z}/m\mathbb{Z}$ there is an isomorphism of relatively graded $\FR$-modules
$$
\hA \cong HF^+(K_m, i),
$$
where $k \equiv i \mbox{ (mod $m$)}$ and $|k| \leq m/2$.
\end{theorem}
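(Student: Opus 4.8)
The statement is the \emph{large surgery formula} of Ozsv\'ath and Szab\'o, and the plan is to reduce it to a direct computation on a Heegaard diagram adapted to $K$. First I would fix a doubly-pointed Heegaard diagram $(\Sigma, \boldsymbol{\alpha}, \boldsymbol{\beta}, w, z)$ for $(S^3, K)$, where the two basepoints record the two filtrations $i$ and $j$ defining $C$. To access the surgered manifold $K_m$ I would replace the meridional $\boldsymbol{\beta}$-curve by one that winds $m$ times through a fixed \emph{winding region} near the knot; this is the standard diagram presenting $m$-surgery. The new intersection points created in the winding region are indexed so as to be in bijection with the generators $[\boldsymbol{x}, i, j]$ of the reduced complex $C$, the winding displacing the filtration level by a controlled amount.

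Next I would read off the $\mathrm{Spin}^c$ decomposition. The winding region sorts the generators of $CF^+(K_m)$ by their $\mathrm{Spin}^c$ structure, and for the class labelled $i$ the surviving generators are exactly those lying in the hook region cut out by $\cA = C\{i \geq 0 \text{ or } j \geq k\}$, where $k \equiv i \pmod{m}$ with $|k| \leq m/2$. The two legs of this hook are governed by the two basepoints: counting with respect to $w$ sees the subcomplex $C\{i \geq 0\} = \cB \cong CF^+(S^3)$, while counting with respect to $z$ sees $C\{j \geq k\}$, precisely the two projections that enter the definitions of $\cv$ and $\ch$.

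The analytic core is to show that for $m$ large the surgery differential on $CF^+(K_m, i)$ coincides with the differential on $\cA$. Here I would run an energy (area) filtration argument: any holomorphic disk whose domain passes from one leg of the hook to the other must cross the winding region, and the area of such a domain grows linearly in $m$. Taking $m$ past a threshold $N$ bounding the area needed -- a threshold one can take to depend only on the support of $C$, hence on $g(K)$ via Theorem \ref{thm:genus_detect} -- forces these mixed disks out of the count, leaving only the disks internal to each leg. These reproduce the differential of $\cA$, giving a chain isomorphism $CF^+(K_m, i) \cong \cA$ and hence $HF^+(K_m, i) \cong \hA$. I would then match Maslov indices through the winding construction to obtain the claimed relative $\ZZ$-grading, and compare the two basepoint actions to check $U$-equivariance over $\FR$.

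The step I expect to be the main obstacle is exactly this disk count in the winding region: one must verify not only that cross-region disks are suppressed for large $m$, but that each disk contributing to the $\cA$ differential appears in the surgered diagram with the correct multiplicity and that no spurious contributions arise, and moreover that a \emph{single} threshold $N$, depending only on $K$ and not on $i$, works uniformly over all $i \in \ZZ/m\ZZ$. Since this is carried out in full in \cite{OSzIntegerSurgeries}, for our purposes it suffices to invoke that theorem; the weaker relatively graded form stated here is all we shall need.
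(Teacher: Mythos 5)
Your proposal agrees with the paper exactly where it matters: the paper offers no proof of this statement, presenting it as a weaker version of the quoted theorem \cite[Theorem 2.3]{OSzIntegerSurgeries} of Ozsv\'ath--Szab\'o, and your concluding step --- invoking that theorem for the analytic core --- is precisely what the paper does. Your winding-region sketch (generators matched through the winding region, $\mathrm{Spin}^c$ sorting, the hook complex $\cA$ with its two legs seen by the two basepoints, and a large-$m$ suppression of disks crossing the winding region, with threshold controlled by $g(K)$) is a fair outline of the argument in the cited source, so there is nothing to fault here.
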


In particular, each $\hA$ is an $HF^+$ of a rational homology sphere in a certain \Sp, hence by the previous paragraph we can decompose it as \linebreak $\hA \cong \hAT \oplus \hAr$, where $\hAr$ is a finite-dimensional vector space in the kernel of some power of $U$ and $\hAT \cong \Ta$.

We will need to talk about the Euler characteristic of the groups $\hAr$, so we need to fix an absolute $\ZZ/2\ZZ$-grading for them. We do so by requiring that for the purposes of this grading each group $\hAT$ lies entirely in grading $0$ and then using the relative $\ZZ/2\ZZ$-grading (induced by the parity of the relative $\ZZ$-grading) on $\hA$.

A rational homology sphere $Y$ is called an \slshape $L$-space \upshape if $HF_{red}(Y, \mathfrak{s}) = 0$ for all \Sps\ $\mathfrak{s}$. A knot $K \subset S^3$ is called an \slshape $L$-space knot \upshape if some positive surgery on it is an $L$-space. In fact it is known that a $p/q$ surgery on an $L$-space knot is an $L$-space if and only if $p/q \geq 2g(K)-1$ ($g(K)$ is, as usual, the genus of $K$). In particular, all large surgeries on $L$-space knots are $L$-spaces, hence for any $L$-space knot $K$ we have $\hAr = 0$ for all $k$.

In the same way we can decompose the complexes of the exact triangle \eqref{extriang}:
$$
\hAAT = \bigoplus_{n \in \ZZ}(n,\hATn), \mbox{ } \hAAr = \bigoplus_{n \in \ZZ}(n,\hArn).
$$

We can also decompose the map $\hD = \hDT \oplus \hDr$, where the first map is the restriction of $\hD$ to $\hAAT$ and the second one is the restriction to $\hAAr$. Note that for $L$-space knots $\hD = \hDT$.

Now the restriction of $\hv$ (respectively $\hh$) to $\hAT$ is a multiplication by some power of $U$. (This is because at large gradings these maps are isomorphisms.) We denote this power by $V_k$ (respectively $H_k$). Following are some useful properties of $V_k$ and $H_k$, see \cite{niWu}:

\begin{itemize}
\item $V_k \geq V_{k+1}$ for any $k \in \ZZ$;
\item $H_k \leq H_{k+1}$ for any $k \in \ZZ$;
\item $V_k = H_{-k}$ for any $k \in \ZZ$;
\item $V_k \to + \infty$ as $k \to - \infty$;
\item $H_k \to + \infty$ as $k \to +\infty$;
\item $V_k = 0$ for $k\geq g(K)$;
\item $H_k = 0$ for $k \leq -g(K)$.
\label{vhprop}
\end{itemize}

In other words, $V_k$ form a non-increasing unbounded sequence of non-negative numbers, which become zero at $g(K)$ and $H_k = V_{-k}$.

We now separate into three different cases. Firstly, we cover the case of positive surgery slope. Secondly, we treat negative surgeries. The third case is the zero surgery.

\subsection{Positive surgeries}
\label{subsec:positive}

The next lemma is used to establish that $\hDT$ is surjective when $p/q > 0$. We state it in a slightly more general form because in this form it also applies to other manifolds.

\begin{lemma} 
Let $X = Y = \bigoplus_{i \in \ZZ}(i,\Ta), X' = \bigoplus_{i \neq 0}(i,\Ta)$ and maps

$$
\alpha_i \co (i,\Ta) \to (i,\Ta),
$$

$$
\beta_i \co (i,\Ta) \to (i+1,\Ta)
$$

be multiplications by $U^{a_i}$ and $U^{b_i}$ respectively. Suppose further that 
\begin{itemize}
\item there is a number $N$ s.t. $a_i = 0$ for $i\geq N$, $b_i = 0$ for $i\leq - N$ and
\item $a_i \to + \infty$ as $i \to -\infty$, $b_i \to + \infty$ as $i \to + \infty$.
\end{itemize}
Define $D$ to be the sum of the maps $\alpha_i$ and $\beta_i$. Then the restriction of $D$ to $X'$ is surjective.
\label{lemma:surj}
\end{lemma}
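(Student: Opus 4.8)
\emph{The plan} is to read the statement as an assertion about solving an infinite linear system over the tower. Recall that $\Ta \cong \FF[U^{-1}]$, so that for every $a \geq 0$ multiplication by $U^a$ is a \emph{surjective} endomorphism of $\Ta$ (with kernel of dimension $a$). For a nonzero $t = \sum_j c_j U^{-j} \in \Ta$ write $\mathrm{ht}(t) = \max\{j : c_j \neq 0\}$ and set $\mathrm{ht}(0) = -\infty$; then $\mathrm{ht}(U^a t) = \mathrm{ht}(t) - a$ (with the convention that $U^a t = 0$ once $a > \mathrm{ht}(t)$), and a preimage of $w$ under $U^a$ may always be chosen of height $\mathrm{ht}(w) + a$. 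Writing elements of $X$ and $Y$ as sequences $\{x_i\}$ and $\{y_k\}$ with finitely many nonzero entries, the map $D$ sends $\{x_i\}$ to the sequence whose $k$-th entry is $\alpha_k(x_k) + \beta_{k-1}(x_{k-1})$, and membership in $X'$ just means $x_0 = 0$. Thus I must show that every finitely supported $\{y_k\}$, say with $y_k = 0$ for $|k| > M$, has a finitely supported preimage with $x_0 = 0$.

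First I would solve the system by two independent sweeps away from the seam $k = 0$, using surjectivity of the individual maps. Set $x_0 = 0$ and process the equations $\alpha_k(x_k) + \beta_{k-1}(x_{k-1}) = y_k$ for $k = 1, 2, 3, \dots$ in increasing order: at each stage $x_{k-1}$ is already determined, so I solve $\alpha_k(x_k) = y_k - \beta_{k-1}(x_{k-1})$ for $x_k$, which is possible since $\alpha_k$ is surjective. Symmetrically, process the equations for $k = 0, -1, -2, \dots$ in decreasing order: here $x_k$ is already known, and I solve $\beta_{k-1}(x_{k-1}) = y_k - \alpha_k(x_k)$ for $x_{k-1}$, possible since $\beta_{k-1}$ is surjective. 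The equation at $k = 0$ reads $\beta_{-1}(x_{-1}) = y_0$ precisely because $x_0 = 0$, which is exactly what lets us dispense with the $0$-th summand of the domain. Each equation is used once and determines one new coordinate, so this produces a genuine preimage of $\{y_k\}$ lying in $X'$.

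The hard part is to check that the preimage so produced has \emph{finite} support; this is where the growth hypotheses enter. On the right, once $k \geq \max(N, M+1)$ we have $\alpha_k = \mathrm{id}$ and $y_k = 0$, so the recursion collapses to $x_k = -\beta_{k-1}(x_{k-1}) = -U^{b_{k-1}} x_{k-1}$, giving $\mathrm{ht}(x_k) \leq \mathrm{ht}(x_{k-1}) - b_{k-1}$; since $b_{k-1} \to +\infty$ and every element of $\Ta$ has finite height, the heights drop to $-\infty$ after finitely many steps, i.e.\ $x_k = 0$ for all large $k$ (and this is preserved thereafter). On the left the roles are reversed: once $k \leq \min(1-N, -M-1)$ we have $\beta_{k-1} = \mathrm{id}$ and $y_k = 0$, so $x_{k-1} = -\alpha_k(x_k) = -U^{a_k} x_k$ with $a_k \to +\infty$, and the same height argument forces $x_{k-1} = 0$ for all sufficiently negative $k$. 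The finitely many intermediate coordinates are arbitrary elements of $\Ta$, hence automatically of finite height, so no divergence occurs there. The one point to watch in the bookkeeping is that in each tail it is exactly the \emph{inverted} map that has become the identity while the \emph{applied} map carries the unboundedly growing power of $U$; this is what guarantees termination rather than blow-up, and it yields a finitely supported preimage, establishing surjectivity of $D|_{X'}$.
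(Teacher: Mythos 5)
Your proof is correct and is essentially the argument of the paper: you build a preimage with $x_0=0$ by recursive cancellation sweeping away from the seam, and finiteness of support in each tail follows because there the map being inverted has exponent zero while the map being applied carries an unboundedly growing power of $U$ on the finite-height module $\Ta$ --- exactly the mechanism (via $a_{s+1}-b_s\to+\infty$) in the paper's recursion. The only cosmetic difference is that the paper first reduces by linearity and symmetry to monomial targets $(i,U^{-n})$ with $i\leq 0$ and performs a one-sided sweep, whereas you handle an arbitrary finitely supported target with a simultaneous two-sided sweep and explicit height bookkeeping.
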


The setting here is very similar to the one described by Figure \ref{cone_complex}. Only we choose one of the zig-zag complexes and all the groups in both the top and the bottom row are the towers -- see Figure \ref{fig:positive_complex}.

\begin{proof}
This is essentially what Ni and Wu prove in \cite[Lemma 2.8]{niWu}. We will show that for any $n \geq 0$ and $i\leq 0$, $(i,U^{-n})$ is in the image of the restriction of $D$ to $X'$. The conclusion will then follow by symmetry and linearity.

We clearly have $(i,U^{-n}) = \beta_{i-1}(i-1,U^{-n-b_{i-1}})$. Define $\xi = \{(i,\xi_i)\}_{i \in \ZZ} \in X'$ recursively by 

$$
  \xi_s=\begin{cases}
    0 & \text{if $s \geq i$},\\
    U^{-n-b_{i-1}} & \text{if $s = i-1$},\\
    (-1)^{s-i+1}U^{a_{s+1}-b_s}\cdot \xi_{s+1} & \text{otherwise}.
  \end{cases}
$$

In a way, after we set that $\xi_s = 0$ for $s \geq i$, this is the only possible definition (up to the kernel of $D$). This is because the arrow `slanted to the right' has to be used to cancel the rightmost element in the lower row, hence we know what element in its co-domain we have to choose so that it indeed cancels. This tells us what the image of the `vertical' arrow is and hence what the next `slanted' arrow has to cancel etc.

Since we have $a_{s+1}-b_s \to + \infty$ as $s \to -\infty,$ $\xi$ only has a finite number of non-zero coordinates and hence is a well-defined element of $X'$. It is also easy to see that $D(\xi) = (i,U^{-n})$.
\end{proof}

Let $\tau_d(N)$ be a submodule of $\T$ generated by $\{ U^{-n} \}_{0 \leq n \leq N-1}$. As before, we omit the subscript in the absence of the absolute grading.

The setting of the next lemma is less general, indeed we use more information about the numbers $V_k$ and $H_k$.

\begin{figure}
\includegraphics[scale=0.72, clip = true, trim = 30 750 60 10]{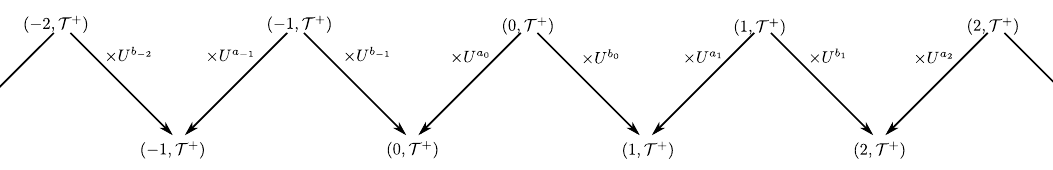}
\caption{Maps and groups of Lemmas \ref{lemma:surj} and \ref{lemma:surj_ker}}.
\label{fig:positive_complex}
\end{figure}

\begin{lemma}
To the assumptions of Lemma \ref{lemma:surj} add the following:
\begin{itemize}
\item $(a_i)$ is a non-increasing sequence;
\item $(b_i)$ is a non-decreasing sequence;
\item $a_i \leq b_i$ for $i \geq 0$;
\item $a_i \geq b_i$ for $i < 0$.
\end{itemize}
Put absolute gradings on $X$ and $Y$ by the rule that the maps $\alpha_i$ and $\beta_i$ decrease it by 1, the multiplication by $U$ decreases it by 2 and $1 \in (0,\Ta) \subset Y$ has grading $d-1$, where $d$ is some rational number.

Then if $a_0 \geq b_{-1}$ 
$$
\ker(D) \cong \Ta_{d-2a_0} \bigoplus_{n \geq 1} \tau_{d^-_n}(b_{-n}) \bigoplus_{n \geq 1} \tau_{d^+_n}(a_{n}).
$$
Otherwise
$$
\ker(D) \cong \Ta_{d-2b_{-1}} \bigoplus_{n \geq 2} \tau_{d^-_n}(b_{-n}) \bigoplus_{n \geq 0} \tau_{d^+_n}(a_{n}).
$$

The isomorphisms are as absolutely graded $\FR$-modules. The numbers $d^{\pm}_n$ are defined by $d^{\pm}_0 = d-2\max\{a_0,b_{-1}\}$, $d^-_{n+1} = d^-_n + 2(a_{-n}-b_{-(n+1)})$ and \linebreak $d^+_{n+1} = d^+_n + 2(b_{n}-a_{n+1}).$
\label{lemma:surj_ker}
\end{lemma}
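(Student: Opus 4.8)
The plan is to compute $\ker(D)$ directly as a graded $\FR$-module, using the surjectivity already established in Lemma \ref{lemma:surj}. An element of $\ker(D)$ is a finitely supported sequence $\xi=\{\xi_i\}$ satisfying $U^{a_k}\xi_k+U^{b_{k-1}}\xi_{k-1}=0$ in the $k$-th copy of $\Ta$ for every $k$; since $a_i=0$ for $i\gg 0$ and $b_i=0$ for $i\ll 0$, these relations are vacuous outside a bounded band of indices, which is what makes the answer finitely presented. First I would pin down the absolute grading. The normalisation of $1\in(0,\Ta)\subset Y$ in degree $d-1$, together with the rules that $\alpha_i,\beta_i$ lower the grading by $1$ and $U$ lowers it by $2$, forces the bottom generator $1$ of the $i$-th tower of $Y$ (resp.\ $X$) into a definite degree; writing this out gives the one-step recursions [degree of $1\in(i{+}1,\Ta)\subset Y$] $=$ [degree of $1\in(i,\Ta)\subset Y$] $+2(b_i-a_i)$ and [degree of $1\in(i,\Ta)\subset X$] $=$ [degree of $1\in(i,\Ta)\subset Y$] $+1-2a_i$. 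Subtracting consecutive terms reproduces exactly the increments $2(b_n-a_{n+1})$ and $2(a_{-n}-b_{-(n+1)})$ appearing in the definitions of $d^{\pm}_{n}$, so once the module structure is known the absolute gradings fall out of this bookkeeping.

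Because $D$ is onto (its restriction to $X'$ already is, by Lemma \ref{lemma:surj}) and lowers degree by $1$, the graded dimension is computable as $\dim_{\FF}\ker(D)_g=\dim_{\FF}X_g-\dim_{\FF}Y_{g-1}$, and each of these two terms simply counts the indices $i$ whose bottom tower generator lies in degree $\le g$. This yields the full Poincaré series of $\ker(D)$ after an elementary counting problem. In parallel I would compute the socle $\{\eta:U\eta=0\}$ by hand: $U\eta=0$ confines each $\eta_i$ to the bottom of its tower, and the relations then collapse to linear conditions over $\FF$ whose solution space is supported precisely on the central band of indices where both $(a_i)$ and $(b_i)$ are small. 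The graded socle dimension records how many cyclic summands bottom out in each degree.

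Next I would produce explicit generators realising the decomposition. Seeding a generator at a central index and propagating the relations outward — exactly as in the proof of Lemma \ref{lemma:surj}, where the growth $a_i\to+\infty$ as $i\to-\infty$ and $b_i\to+\infty$ as $i\to+\infty$ guarantees that the propagation terminates — gives, at one distinguished crossover index, an element that can be lifted through arbitrarily high powers of $U$ while remaining in $\ker(D)$; this is the tower summand $\Ta$. At the remaining central indices the same propagation stops after finitely many steps, producing the finite summands $\tau(\cdot)$, whose lengths are the values $\min\{a_i,b_i\}$ dictated by the crossing hypotheses $a_i\le b_i$ for $i\ge 0$ and $a_i\ge b_i$ for $i<0$ — that is, $a_n$ for the right-hand summands and $b_{-n}$ for the left-hand ones. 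Matching these generators against the Poincaré series and the socle count shows they are linearly independent and exhaustive, so the sum is direct, and the earlier grading recursions then assign each summand its degree.

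The step I expect to be the main obstacle is locating the single infinite tower and proving the splitting, not the (routine) grading arithmetic. The delicacy is that a naive lift of a socle element seated at a non-tower index also appears to extend indefinitely, yet its images under $U$ secretly acquire genuine tower contributions; to see that only one socle class is honestly infinitely $U$-divisible one must analyse the image of $U$ on each graded piece of $\ker(D)$ rather than merely solve the lifting relations. This is exactly the point that forces the split into two cases: the tower sits at index $0$ when $a_0\ge b_{-1}$ and at index $-1$ otherwise — equivalently at whichever of the two central towers has the lower bottom degree $d-2\max\{a_0,b_{-1}\}$ — and this choice shifts the indexing of the surviving finite summands, which is the origin of the two displayed formulas.
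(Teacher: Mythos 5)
Your route shares its constructive half with the paper's proof: the paper also builds the tower by seeding an element at index $0$ and propagating outward via $\xi_{s}=-U^{b_{s-1}-a_{s}}\xi_{s-1}$ (for $s>0$) and $\xi_{s}=-U^{a_{s+1}-b_{s}}\xi_{s+1}$ (for $s<0$), and the monotonicity hypotheses plus the case hypothesis $a_0\ge b_{-1}$ (resp.\ $a_0<b_{-1}$, with the seed moved to index $-1$) are exactly what make all exponents non-negative, so your account of the origin of the case split and of the tower's bottom degree $d-2\max\{a_0,b_{-1}\}$ agrees with the paper. The finite summands are in the paper simply the kernels of $\alpha_i+\beta_i$, i.e.\ single-coordinate elements killed by $U^{\min(a_i,b_i)}$ — note your ``propagation that stops after finitely many steps'' in fact stops after zero steps for these seeds, since $U^{\min(a_i,b_i)}\xi_i=0$ kills both $\alpha_i(\xi_i)$ and $\beta_i(\xi_i)$. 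Where you genuinely diverge is exhaustiveness: the paper subtracts an element of the tower's image to arrange $\nu_0=0$ and then derives a contradiction by propagating a surviving coordinate out to infinity, whereas you propose an Euler-characteristic argument, $\dim_{\FF}\ker(D)_g=\dim_{\FF}X_g-\dim_{\FF}Y_{g-1}$, valid because $D$ is homogeneous of degree $-1$, surjective by Lemma \ref{lemma:surj}, and the graded pieces are finite-dimensional (the bottom degrees of the towers tend to $+\infty$ as $i\to\pm\infty$, by the same monotonicity and divergence hypotheses). That is a legitimate alternative, but it trades the paper's short contradiction for an unexecuted, and not entirely trivial, verification that this Hilbert function coincides with that of the claimed direct sum.

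One step would fail as literally stated: ``matching these generators against the Poincar\'e series and the socle count shows they are linearly independent and exhaustive.'' Counting cannot establish linear independence, and the pair (Hilbert series, socle series) does not determine a graded $\FR$-module: $\tau_{0}(3)\oplus\tau_{2}(1)$ and $\tau_{0}(2)\oplus\tau_{2}(2)$ have identical Hilbert and socle series but are not isomorphic, so if you meant to pin down the module type from those two series alone, that is a genuine gap. The repair is cheap and renders the socle computation superfluous: prove independence directly — in a relation $\rho^{T}(\eta)+\sum_{i\neq 0} f_i=0$, with each $f_i$ supported at the single index $i$, the coordinate at the tower's seed index (where no $f_i$ lives) forces $\eta=0$, and then each $f_i=0$ — after which the dimension count in each degree upgrades independence of the explicitly constructed summands to exhaustiveness. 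With that substitution your argument closes, and it is at that point a genuinely different proof of the splitting than the paper's. (A small side remark: the relations are not ``vacuous outside a bounded band'' — for $k\gg 0$ the relation still forces $\xi_k=-U^{b_{k-1}}\xi_{k-1}$, and finite support follows only because $b_{k-1}\to+\infty$ — but nothing downstream in your plan uses that claim.)
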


\begin{proof}
The two cases are completely analogous, so we will assume $a_0 \geq b_{-1}$. First, following \cite[proof of Proposition 1.6]{niWu} we define $\rho^{T} \co \Ta_{d-2a_0} \to \ker(D)$ as follows. If we write $\rho^{T}(\eta) = \{(s,\xi_s)\}_{s \in \mathbb{Z}}$, we set $\xi_0 = \eta$ and determine the other components by
$$
  \xi_s=\begin{cases}
    -U^{b_{s-1}-a_s}\xi_{s-1} & \text{if $s > 0$},\\
    -U^{a_{s+1}-b_s}\xi_{s+1} & \text{if $s < 0$}.
  \end{cases}
$$

In effect, we want to simply send the tower to the tower in the 0-component of the upper group. But it is not in the kernel of $D$, so we need to correct for that. In fact, we also want the map to be an $\FR$-module homomorphism, which is the reason for considering the cases $a_0 \geq b_{-1}$ and $a_0 < b_{-1}$ separately.

Notice that we always multiply by a non-negative power of $U$: if $s > 0$, $b_{s-1} \geq a_{s-1} \geq a_s$; if $s = -1$, this is the assumption $a_0 \geq b_{-1}$; if $s < -1$, $a_{s+1} \geq b_{s+1} \geq b_s$. Thus the map is indeed an $\FR$-module homomorphism.

As before, $\xi_s = 0$ if $|s|$ is very big, so the map is well-defined. The map $\rho^{T}$ is one-to-one because its 0-component is (i.e. $\xi_0 = \eta$).  It is also graded correctly (i.e. the map $\rho^{T}$ sends homogeneous elements of absolute grading $d$ to homogeneous elements of grading $d$) because $(0,U^{-a_0})\in X$ is sent to $(0,1) \in Y$ by $\alpha_0$, which has grading $d-1$. Thus $(0,1) \in X$ has grading $d-2a_0$, since to descend from $(0,U^{-a_0})\in X$ to $(0,1) \in X$ we need to multiply by $U^{a_0}$ and multiplication by $U$ has grading $-2$.

We have identified the tower in the kernel. Now we need to deal with the rest of it. Below we prove, that the rest of the kernel consist of the kernels of the maps $\alpha_i + \beta_i$ for each $i$, except the one at which the tower is situated (i.e. $i=0$). It is easy to see, that the kernel of $\alpha_i + \beta_i$ is isomorphic to $\tau(\min(a_i,b_i))$.

If $\nu = \{(s,\nu_s)\}_{s\in \mathbb{Z}} \in \ker(D)$, by subtracting elements in the image of $\rho^{T}$ we may assume that $\nu \in X'$, i.e. $\nu_0 = 0$. Without loss of generality there exists $s<0$ s.t. $\nu_s \neq 0$. To finish the proof we need to show that $U^{b_s}\cdot \nu_s = 0$ (recall that in this range $b_s \leq a_s$). Suppose this is not so and $0 \neq U^{b_s}\cdot \nu_s$. Since $\nu$ is in the kernel, it has to be cancelled by something. It follows that we must have $\beta_s(\nu_s)+\alpha_{s+1}(\nu_{s+1})=0$. Thus $0 \neq U^{b_s}\cdot \nu_s = -U^{a_{s+1}}\nu_{s+1} \Rightarrow 0 \neq U^{b_{s+1}}\nu_{s+1}$, as $a_{s+1} \geq b_{s+1}$ if $s<-1$. By proceeding in this way it follows that $\nu_0 \neq 0$, i.e. $\nu \not \in X'$ -- a contradiction.
\end{proof}

The two lemmas above can be readily translated into results about surgery. The $d$-invariant formula \eqref{d-inv} from the corollary below is \cite[Proposition 1.6]{niWu}.

\begin{cor}
If $p/q>0$ the map $\hDT$ is surjective. It follows that so is $\hD$ and we conclude that $\HF \cong \ker(\hD)$.

If $\floor{\frac{i}{q}} \leq -\floor{\frac{i-p}{q}}$, then
$$
\ker(\hDT) \cong \T \bigoplus_{n \geq 1} \tau_{d^-_n}(H_{\floor{\frac{i-np}{q}}})\bigoplus_{n \geq 1} \tau_{d^+_n}(V_{\floor{\frac{i+np}{q}}}).
$$
Otherwise
$$
\ker(\hDT) \cong \T \bigoplus_{n \geq 2} \tau_{d^-_n}(H_{\floor{\frac{i-np}{q}}})\bigoplus_{n \geq 0} \tau_{d^+_n}(V_{\floor{\frac{i+np}{q}}}).
$$

Here 
\begin{equation}
d = \dK = \dU -2\max\{V_{\floor{\frac{i}{q}}},H_{\floor{\frac{i-p}{q}}}\},
\label{d-inv}
\end{equation}

and 
$$
d^-_n = d+2\sum_{k=0}^{n-1}(V_{\floor{\frac{i-kp}{q}}} - H_{\floor{\frac{i-(k+1)p}{q}}}),
$$
$$
d^+_n = d+2\sum_{k=0}^{n-1}(H_{\floor{\frac{i+kp}{q}}} - V_{\floor{\frac{i+(k+1)p}{q}}}).
$$
\label{cor:dtpos}
\end{cor}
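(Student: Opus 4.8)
The plan is to feed the surgery data into Lemmas~\ref{lemma:surj} and~\ref{lemma:surj_ker}. For the zig-zag subcomplex attached to the Spin$^c$-structure $i$, the $n$-th upper group is the tower $\hATn$; the vertical map leaving it is $v^+_{\floor{\frac{i+pn}{q}}}$, acting on the tower as multiplication by $U^{V_{\floor{\frac{i+pn}{q}}}}$, and the slanted map is $h^+_{\floor{\frac{i+pn}{q}}}$, acting as $U^{H_{\floor{\frac{i+pn}{q}}}}$. Accordingly I would set $a_n = V_{\floor{\frac{i+pn}{q}}}$ and $b_n = H_{\floor{\frac{i+pn}{q}}}$, so that $\hDT$ becomes exactly the map $D$ of the two lemmas.

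First I would verify the hypotheses of Lemma~\ref{lemma:surj}. As $p/q>0$, the index $\floor{\frac{i+pn}{q}}$ is non-decreasing in $n$ and runs to $\pm\infty$; with the bullet-listed behaviour of $V_k,H_k$ this yields $a_n=0$ for $n\gg 0$, $b_n=0$ for $n\ll 0$, and $a_n\to+\infty$ as $n\to-\infty$, $b_n\to+\infty$ as $n\to+\infty$. Lemma~\ref{lemma:surj} then gives surjectivity of $\hDT$; since this is the restriction of $\hD$ to the tower part $\hAAT$ and already lands onto all of $\hBB$, the full map $\hD=\hDT\oplus\hDr$ is surjective. Feeding this into the exact triangle~\eqref{extriang} kills $i_*$ and identifies $j_*$ with an isomorphism onto $\ker(\hD)$, giving $\HF\cong\ker(\hD)$.

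Next I would check the extra hypotheses of Lemma~\ref{lemma:surj_ker}. Monotonicity of $(a_n)$ and $(b_n)$ is immediate from that of $V_k$, $H_k$ and of the index. Because $a_n-b_n$ is then non-increasing, the sign conditions $a_n\le b_n$ for $n\ge 0$ and $a_n\ge b_n$ for $n<0$ reduce to $a_0\le b_0$ and $a_{-1}\ge b_{-1}$, which via the symmetry $V_k=H_{-k}$ amount to $\floor{\frac{i}{q}}\ge 0$ and $\floor{\frac{i-p}{q}}\le 0$; these hold for the representative $i$ given by the numbering. The same symmetry converts the alternative $a_0\ge b_{-1}$ of the lemma into $\floor{\frac{i}{q}}\le-\floor{\frac{i-p}{q}}$, matching the two cases of the corollary. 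Substituting $a_n=V_{\floor{\frac{i+pn}{q}}}$ and $b_{-n}=H_{\floor{\frac{i-np}{q}}}$ into the kernel formula of Lemma~\ref{lemma:surj_ker} produces the two displayed descriptions of $\ker(\hDT)$, and telescoping the recursions $d^-_{n+1}=d^-_n+2(a_{-n}-b_{-(n+1)})$ and $d^+_{n+1}=d^+_n+2(b_n-a_{n+1})$ from the base value yields the stated closed sums for $d^\pm_n$.

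The delicate point, and what I expect to be the main obstacle, is the absolute grading, i.e. establishing~\eqref{d-inv}. The lemmas fix gradings only up to the free parameter $d$, with $1\in(0,\Ta)\subset\hBB$ in degree $d-1$. To pin it down I would exploit that $\hBB$ is knot-independent and normalise against the unknot: there $g=0$, all $V_k=H_k=0$ in the relevant range, and the mapping cone computes $HF^+(L(p,q),i)=\Ta_{\dU}$, which forces the lemma's parameter to be $d=\dU$. For a general knot the kernel tower then sits with its bottom in degree $\dU-2\max\{V_{\floor{\frac{i}{q}}},H_{\floor{\frac{i-p}{q}}}\}$, i.e. exactly $\dK$, in both cases; this is the value $d^\pm_0$ from which the telescoped gradings are measured, and it reproduces~\eqref{d-inv}. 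I expect the only genuine care to be this grading bookkeeping---the mapping-cone shift by $1$ and the choice of representative $i$---everything else being a direct translation of the two lemmas.
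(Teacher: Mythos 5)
Your proposal is correct and takes essentially the same route as the paper's proof: the identical renumbering $a_n = V_{\floor{\frac{i+pn}{q}}}$, $b_n = H_{\floor{\frac{i+pn}{q}}}$ feeding Lemmas \ref{lemma:surj} and \ref{lemma:surj_ker}, followed by the same normalisation of the knot-independent grading on $\hBB$ against the unknot (where all relevant $V_k, H_k$ vanish and the cone computes $HF^+(L(p,q),i)$) to obtain \eqref{d-inv} and the telescoped values of $d^{\pm}_n$. If anything, you verify the lemmas' hypotheses (monotonicity, the sign conditions via $V_k = H_{-k}$ and the representative $0 \leq i \leq p-1$, and the case split $a_0 \geq b_{-1}$ versus $\floor{\frac{i}{q}} \leq -\floor{\frac{i-p}{q}}$) more explicitly than the paper, which simply declares the application straightforward.
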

\begin{proof}
This is a straightforward application of Theorem \ref{thm:mapping cone} and Lemmas \ref{lemma:surj} and \ref{lemma:surj_ker} after renumbering of the groups and maps -- objects numbered with $\floor{\frac{i+np}{q}}$ correspond to the ones numbered with $n$ in Lemmas \ref{lemma:surj} and \ref{lemma:surj_ker}.

To fix the grading, note that the grading of $\hBB$ does not depend on the knot, but only on the surgery slope. Thus to grade it we can take the unknot $U$. For the unknot we have $V_i = 0$ for $i \geq 0$ and $V_i = i$ for $i < 0$. Hence $0 = V_{\floor{\frac{i}{q}}} \geq H_{\floor{\frac{i-p}{q}}} = 0$, and by the the same argument as we used for an arbitrary knot, the grading of 1 in $(0, \boldsymbol{A}^+_{\floor{\frac{i}{q}}}(U))$ is the $d$-invariant of the surgery, which we know to be $\dU$ in this case. Since $V_{\floor{\frac{i}{q}}} = 0$, we find that the grading of 1 in $(0,\boldsymbol{B}^+)$ is $\dU - 1$. This allows us to fix the $d$-invariants for all other knots.

We can fix $d^{\pm}_n$ by the fact that the maps $\hv$ and $\hh$ reduce it by 1 and the multiplication by $U$ reduces it by 2.
\end{proof}

As we noted before, for $L$-space knots $\hD = \hDT$. Let $K$ be a knot and $\Delta_K(T) = a_0 + \sum_i a_i (T^i+T^{-i})$ be its symmetrised Alexander polynomial, with normalisation convention $\Delta_K(1) = 1$. Define its \slshape torsion coefficients \upshape $t_i(K)$ for $i\geq 0$ by
\begin{equation*}
t_i(K) = \sum_{j\geq 1} ja_{i+j}.
\end{equation*}

Clearly, if we know all the torsion coefficients, we know the Alexander polynomial. For $L$-space knots, $V_k = t_k$ for $k \geq 0$ (this follows, for example, from \cite{OSzRatSurg}), so Corollary \ref{cor:dtpos} determines the Heegaard Floer homology of positive surgeries on an $L$-space knot in terms of its Alexander polynomial.

The next proposition expresses the Heegaard Floer homology of positive surgeries for arbitrary knots in terms of data from $CFK^{\infty}$. This proposition is essentially \cite[Proposition 3.5]{niZhangChar}.

\begin{prop}
As absolutely graded vector spaces,
$$
\ker(\hD) \cong \ker(\hDT) \oplus \hAAr.
$$
Moreover, $\ker(\hDT)$ is actually a submodule of $\ker(\hD)$.
\label{prop:hfipos}
\end{prop}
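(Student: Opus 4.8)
The plan is to exhibit a short exact sequence of absolutely graded vector spaces
$$0 \to \ker(\hDT) \to \ker(\hD) \xrightarrow{\ \pi\ } \hAAr \to 0$$
and then to split it degreewise. Since we work over a field, any such sequence of $\ZZ$-graded vector spaces splits in a grading-preserving manner, which yields the asserted isomorphism $\ker(\hD) \cong \ker(\hDT) \oplus \hAAr$ as absolutely graded vector spaces. The reason this is only a vector-space statement, and not a module statement, is that the splitting is not canonical and need not be $U$-equivariant; this is precisely why the ``moreover'' clause has to be phrased separately.

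To build the sequence I would use the decomposition $\hAA = \hAAT \oplus \hAAr$ together with the fact that $\hD$ is the sum of its two restrictions, so that for $(x,y) \in \hAAT \oplus \hAAr$ one has $\hD(x,y) = \hDT(x) + \hDr(y)$ in $\hBB$. Let $\pi$ be the restriction to $\ker(\hD)$ of the projection $(x,y) \mapsto y$; it is manifestly grading-preserving and $U$-equivariant, and its kernel consists of the elements $(x,0)$ with $\hDT(x)=0$, that is, exactly $\ker(\hDT)$ included via $x \mapsto (x,0)$. The one substantive point is surjectivity of $\pi$: given a homogeneous $y \in \hAAr$, the element $-\hDr(y)$ lies in $\hBB$, and since $\hDT$ is surjective by Corollary \ref{cor:dtpos} (this is where positivity of the slope enters), I can choose a homogeneous $x \in \hAAT$ of the correct grading with $\hDT(x) = -\hDr(y)$; then $(x,y) \in \ker(\hD)$ and $\pi(x,y)=y$. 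Fixing a graded linear section of $\hDT$ makes this choice linear in $y$ and produces an explicit graded splitting $\hAAr \to \ker(\hD)$, which is the whole content of the first assertion.

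For the ``moreover'' statement, note that $\hAAT$ is a $U$-submodule of $\hAA$ with $\hDT = \hD|_{\hAAT}$, so $\ker(\hDT) = \ker(\hD) \cap \hAAT$ is an honest $\FR$-submodule of $\ker(\hD)$ carrying its inherited grading. I expect the main obstacle to be bookkeeping rather than conceptual: one must verify that $\pi$, the inclusion, and the chosen section each shift the absolute grading correctly, recalling that $\hD$ and its pieces lower grading by $1$ while $U$ lowers it by $2$, and one must be scrupulous that the splitting is claimed only at the level of graded vector spaces, since $\hAAr$ does not sit inside $\ker(\hD)$ as a canonical submodule. Finally, the identification $\ker(\hD) \cong \HF$ for $p/q>0$ coming from the exact triangle \eqref{extriang} transports this description onto the Heegaard Floer homology of the surgery.
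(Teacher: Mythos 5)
Your proposal is correct and takes essentially the same route as the paper: the paper states the identical fact as an abstract linear-algebra lemma (for $\rho_U \oplus \rho_V$ with $\rho_U$ surjective, choosing a graded section $\rho^*_U$ and defining $T(x \oplus y) = (x - \rho^*_U \circ \rho_V(y)) \oplus y$), which is exactly your short exact sequence split by a graded section of $\hDT$, with surjectivity of $\hDT$ for $p/q>0$ invoked in the same way. The only cosmetic difference is packaging --- explicit splitting map versus splitting a short exact sequence over a field --- so there is nothing substantive to add.
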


\begin{proof}
This is a straightforward exercise in linear algebra. 

Given vector spaces $U, V, W$ and linear maps $\rho_U \co U \to W, \ \rho_V \co V \to W$, such that $\rho_U$ is surjective, $\ker(\rho_U \oplus \rho_V) \cong \ker(\rho_U) \oplus V$.

There exists a map $\rho^*_U \co W \to V$ such that $\rho_U \circ \rho^*_U = \mathrm{id}_W$. In the graded situation we can make $\rho^*_U$ send homogeneous elements to homogeneous elements. Then we can define $T \co \ker(\rho_U) \oplus V \to \ker(\rho_U \oplus \rho_V)$ by $T(x \oplus y) = (x - \rho^*_U \circ \rho_V(y)) \oplus y.$ Since in our case $\rho_U \oplus \rho_V$ is graded, $T$ is an isomorphism of graded vector spaces.
\end{proof}

Let
$$
\As = \bigoplus_{k \in \ZZ} \hAr.
$$

This is a finite-dimensional vector space, as each $\hAr$ is and $\hAr = 0$ for $|k| \geq g(K)$. We define $\delta(K) = \dim(\As)$. Note that $\delta(K) = 0 \Leftrightarrow K$ is an $L$-space knot. The following proposition (which generalises \cite[Proposition 5.3]{niWu}) is \cite[Corollary 3.6]{niZhangChar}.

\begin{prop}
Let $K \subset S^3$ be a knot and $p/q>0$. Then
\begin{equation}
\dim(\HFFr) = q\delta(K) + qV_0 + 2q\sum_{i=1}^{g-1}V_i - \sum_{i=0}^{p-1}\max(V_{\floor{\frac{i}{q}}},H_{\floor{\frac{i-p}{q}}}).
\end{equation}
\label{prop:rankpos}
\end{prop}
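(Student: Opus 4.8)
The plan is to assemble the dimension count directly from the structural results already in hand. For $p/q>0$ Corollary \ref{cor:dtpos} gives $\HF \cong \ker(\hD)$, and Proposition \ref{prop:hfipos} splits this as $\ker(\hDT)\oplus\hAAr$ of graded vector spaces. Since $\ker(\hDT)$ contains exactly one tower summand $\Ta$ (Corollary \ref{cor:dtpos}) while $\hAAr$ is entirely $U$-torsion, removing the tower identifies the reduced homology in the \Sp\ labelled $i$ as $\HFr \cong (\text{the non-tower part of } \ker(\hDT))\oplus\hAAr$. As we only care about dimensions, gradings are irrelevant, and I would write $\dim\HFFr=\sum_{i=0}^{p-1}\dim\HFr$ as the sum of an $\hAAr$-contribution and a $\ker(\hDT)$-contribution, handling each separately.

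For the $\hAAr$-contribution, note $\dim\hAAr=\sum_{n\in\ZZ}\dim\hArn$. The key combinatorial fact I would establish is that as $(i,n)$ ranges over $\{0,\dots,p-1\}\times\ZZ$ the quantity $i+np$ runs over $\ZZ$ bijectively (division by $p$), and then $m\mapsto\floor{m/q}$ attains each value $k$ exactly $q$ times; consequently each index $k$ of $\hAr$ is realized exactly $q$ times. Therefore $\sum_{i=0}^{p-1}\dim\hAAr=q\sum_{k\in\ZZ}\dim\hAr=q\dim\As=q\delta(K)$, which is the first term of the formula.

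For the $\ker(\hDT)$-contribution I would first use $\dim\tau(N)=N$, so that Corollary \ref{cor:dtpos} expresses the non-tower dimension as a sum of $V$- and $H$-values. The clean move is to unify the two cases of the corollary by writing this dimension as $S_i-M_i$, where $M_i=\max(V_{\floor{\frac{i}{q}}},H_{\floor{\frac{i-p}{q}}})$ and $S_i=\sum_{n\geq 0}V_{\floor{\frac{i+np}{q}}}+\sum_{n\geq 1}H_{\floor{\frac{i-np}{q}}}$. In the first case of the corollary one has $M_i=V_{\floor{\frac{i}{q}}}$ and the absent $n=0$ term of the $V$-sum accounts precisely for the difference; in the second case $M_i=H_{\floor{\frac{i-p}{q}}}$ and the absent $n=1$ term of the $H$-sum does. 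Both therefore collapse to $S_i-M_i$, and $\sum_{i=0}^{p-1}M_i$ is exactly the subtracted term in the statement.

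It remains to evaluate $\sum_{i=0}^{p-1}S_i$. Restricting the bijection to $n\geq 0$ realizes each non-negative integer once, so $\floor{\frac{i+np}{q}}$ hits each $k\geq 0$ exactly $q$ times, giving $\sum_{i=0}^{p-1}\sum_{n\geq 0}V_{\floor{\frac{i+np}{q}}}=q\sum_{k\geq 0}V_k=q\bigl(V_0+\sum_{k=1}^{g-1}V_k\bigr)$ since $V_k=0$ for $k\geq g$. Likewise $n\geq 1$ realizes each negative integer once, so $\sum_{i=0}^{p-1}\sum_{n\geq 1}H_{\floor{\frac{i-np}{q}}}=q\sum_{k<0}H_k$, and $H_k=V_{-k}$ converts this to $q\sum_{j\geq 1}V_j=q\sum_{j=1}^{g-1}V_j$. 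Adding gives $\sum_{i=0}^{p-1}S_i=qV_0+2q\sum_{i=1}^{g-1}V_i$, and combining the three pieces yields the claimed identity. I expect the only genuinely delicate point to be the case-merging in the third paragraph, together with the repeated counting claim that each $\floor{\cdot/q}$ value is attained exactly $q$ times; once those are pinned down the rest is routine summation.
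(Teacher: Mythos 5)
Your proposal is correct and follows essentially the same route as the paper: the paper likewise combines Proposition \ref{prop:hfipos} with Corollary \ref{cor:dtpos}, unifies the two cases of the corollary by subtracting the term $\max\bigl(V_{\floor{\frac{i}{q}}},H_{\floor{\frac{i-p}{q}}}\bigr)$ from the full sum of $\tau$-pieces, and uses the same counting (each value of $\floor{\cdot/q}$ attained exactly $q$ times, plus $H_k=V_{-k}$ and vanishing for $|k|\geq g$) to collapse everything to $q\delta(K)+qV_0+2q\sum_{i=1}^{g-1}V_i$ minus the max-term. Your write-up merely makes explicit the case-merging and bijection arguments that the paper leaves implicit in its displayed computation.
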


\begin{proof}
Since
$$
\dim(\HFFr) = \sum_{i=0}^{p-1}\dim(\HFr),
$$

combining Proposition \ref{prop:hfipos} and Corollary \ref{cor:dtpos} we see that
\begin{multline*}
\dim(\HFFr) = \\
= \sum_{i \in \ZZ}\dim(\hAq) + \sum_{i \geq 0}V_{\floor{\frac{i}{q}}} + \sum_{i \geq 1}H_{\floor{\frac{-i}{q}}} - \sum_{i=0}^{p-1}\max(V_{\floor{\frac{i}{q}}},H_{\floor{\frac{i-p}{q}}}) = \\
= q\sum_{k \in \ZZ}\dim(\hAr) + q\sum_{i=0}^{g-1}V_i + q\sum_{i=-(g-1)}^{-1}H_i- \sum_{i=0}^{p-1}\max(V_{\floor{\frac{i}{q}}},H_{\floor{\frac{i-p}{q}}})= \\
= q\delta(K) + qV_0 + 2q\sum_{i=1}^{g-1}V_i - \sum_{i=0}^{p-1}\max(V_{\floor{\frac{i}{q}}},H_{\floor{\frac{i-p}{q}}}).
\end{multline*}
\end{proof}

Now we are ready to prove Theorem \ref{thm:q-bound}.

{
\renewcommand{\thetheorem}{\ref{thm:q-bound}}
\begin{theorem}
Suppose $K$ is a non-trivial knot and $Y = \KS$. Then 
$$
|q| \leq |H_1(Y)| + \dim(HF_{red}(Y)).
$$
\end{theorem}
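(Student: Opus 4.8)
The plan is to read everything off the rank formula of Proposition~\ref{prop:rankpos}, after first normalising the signs. Write the slope as $p/q$ with $q>0$; then $|H_1(Y)|=|p|$, so the claim is equivalent to $\dim(\HFFr)\ge q-|p|$. Reversing orientation and mirroring replaces $(K,p/q)$ by $(\overline{K},-p/q)$ and leaves $q$, $|H_1(Y)|$, $\dim(\HFFr)$, and non-triviality all unchanged, so I may assume $p>0$; the goal then reads $\dim(\HFFr)\ge q-p$.

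The first step is a cheap lower bound. By Proposition~\ref{prop:hfipos} we have $\HF\cong\ker(\hD)\cong\ker(\hDT)\oplus\hAAr$, and the summands $\hAAr$ are $U$-torsion, hence lie entirely in reduced homology; summing over the $p$ \Sps\ they contribute $q\,\delta(K)$ in total, exactly as in the computation in the proof of Proposition~\ref{prop:rankpos}. Thus $\dim(\HFFr)\ge q\,\delta(K)$. If $K$ is not an $L$-space knot then $\delta(K)\ge 1$, so $\dim(\HFFr)\ge q\ge q-p$ and we are done. This settles every knot that is not an $L$-space knot.

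It remains to treat a non-trivial $L$-space knot, where $\delta(K)=0$ and the bound above is vacuous. If $q\le p$ the inequality is trivial, so assume $q>p$, i.e. $0<p<q$. In this range $\floor{\frac{i}{q}}=0$ and $\floor{\frac{i-p}{q}}=-1$ for every $0\le i\le p-1$, so the subtracted term in Proposition~\ref{prop:rankpos} collapses:
$$
\sum_{i=0}^{p-1}\max\!\left(V_{\floor{\frac{i}{q}}},H_{\floor{\frac{i-p}{q}}}\right)=p\,\max(V_0,H_{-1})=p\,V_0 ,
$$
using $H_{-1}=V_1\le V_0$. Since $\delta(K)=0$, Proposition~\ref{prop:rankpos} then gives
$$
\dim(\HFFr)=(q-p)V_0+2q\sum_{i=1}^{g-1}V_i .
$$

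The crux is the single inequality $V_0\ge 1$, and this is exactly where non-triviality of $K$ enters. Because $K$ is an $L$-space knot, $V_k=t_k(K)$ for $k\ge 0$, and since $\deg\Delta_K=g\ge 1$ the definition $t_{g-1}(K)=\sum_{j\ge 1}j\,a_{g-1+j}=a_g$ identifies $V_{g-1}$ with the leading coefficient $a_g$ of $\Delta_K$, a non-zero integer; as $V_{g-1}\ge 0$ this forces $a_g\ge 1$, whence $V_0\ge V_{g-1}=a_g\ge 1$. Plugging this in yields $\dim(\HFFr)\ge(q-p)V_0\ge q-p$, as required. I expect this last step to be the only genuinely delicate point: isolating the $L$-space case, where the generic estimate $\dim(\HFFr)\ge q\,\delta(K)$ gives nothing, and then extracting the strict positivity $V_0\ge 1$ from the structure of $\Delta_K$; everything else is bookkeeping with Proposition~\ref{prop:rankpos}.
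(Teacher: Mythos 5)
Your proof is correct and takes essentially the same route as the paper: both arguments rest on the rank formula of Proposition~\ref{prop:rankpos} together with the dichotomy that either $\delta(K)\geq 1$ or $K$ is a non-trivial $L$-space knot with $V_0\geq 1$. The paper packages this as a single estimate $q \leq \frac{\dim(\HFFr)+pV_0}{\delta(K)+V_0} \leq \dim(\HFFr)+p$ (using $\max(V_{\floor{i/q}},H_{\floor{(i-p)/q}})\leq V_0$ rather than your exact evaluation in the range $0<p<q$), while you split into cases and justify $V_0\geq 1$ via the leading coefficient of $\Delta_K$ instead of citing $V_k=0\Leftrightarrow k\geq g(K)$ for $L$-space knots, but these are reorganizations of the same argument.
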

\addtocounter{theorem}{-1}
}
\begin{proof}
This is an easy consequence of Ni-Zhang's formula of Proposition \ref{prop:rankpos} (by taking the mirror image we may assume $p/q >0$). We have
\begin{multline*}
\dim(\HFFr) + \sum_{i=0}^{p-1}\max(V_{\floor{\frac{i}{q}}},H_{\floor{\frac{i-p}{q}}}) = \\
= q\delta(K) + qV_0 + 2q\sum_{i=1}^{g-1}V_i \geq q(\delta(K) + V_0).
\end{multline*}

Recall that $\delta(K) = \dim(\As)$, so it is non-negative and $\delta(K) = 0$ if and only if $K$ is an $L$-space knot, in which case $V_k = 0$ if and only if $k\geq g(K)$, so for non-trivial $L$-space knots $V_0 \neq 0$. If $V_0 = 0$ then all $V$'s (and $H$'s) are zero and as $\delta(K) \neq 0$ by the previous sentence, we clearly get $q \leq \dim(\HFFr)$.

So suppose $V_0 \neq 0$. Then
\begin{multline*}
\dim(\HFFr) + pV_0 \geq \\
\geq \dim(\HFFr) + \sum_{i=0}^{p-1}\max(V_{\floor{\frac{i}{q}}},H_{\floor{\frac{i-p}{q}}}) \geq q(\delta(K) + V_0).
\end{multline*}

Finally we have
\begin{multline*}
q \leq \frac{\dim(\HFFr) + pV_0}{\delta(K) + V_0} = \\
 \frac{\dim(\HFFr)}{\delta(K) + V_0} + \frac{pV_0}{\delta(K) + V_0} \leq \dim(\HFFr) + p.
\end{multline*}
\end{proof}

\subsection{Negative surgeries} 

In the case when $p/q <0$ the map $\hD$ is no longer surjective. However, we can show that the cokernel consists of exactly the tower part and the kernel is the reduced Floer homology $\HFr$. We start with a general lemma, which is similar to Lemmas \ref{lemma:surj} and \ref{lemma:surj_ker}. The main difference is in that the $\beta_i$ maps go to the groups labelled with a smaller index.

\begin{lemma} 
Let $X = Y = \bigoplus_{i \in \ZZ}(i,\Ta)$ and maps $\alpha_i \co (i,\Ta) \to (i,\Ta), \linebreak \beta_i \co (i,\Ta) \to (i-1,\Ta)$ be multiplications by $U^{a_i}$ and $U^{b_i}$ respectively. Suppose further that $a_i, b_i$ have the following properties.
\begin{itemize}
\item There is a number $N$ s.t. $a_i = 0$ for $i\geq N$, $b_i = 0$ for $i\leq - N$;
\item $a_i \to + \infty$ as $i \to -\infty$, $b_i \to + \infty$ as $i \to + \infty$;
\item $a_i \geq b_i$ for $i < 0$, $a_i \leq b_i$ for $i \geq 0$.
\end{itemize}
Then no element of $(-1,\Ta) \subset Y$ is in the image of $D$ and $(-1,\Ta) \subset Y$ generates the cokernel of $D$.
The kernel of $D$ has the following form.
$$
\ker(D) \cong \bigoplus_{i \in \ZZ}\tau(\min(a_i,b_i)).
$$
\label{lemma:non-surj}
\end{lemma}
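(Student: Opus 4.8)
The plan is to analyze $D$ componentwise. Writing $x=\{(j,x_j)\}_{j\in\ZZ}$ with each $x_j\in\Ta$, the defining formula reads
$$
D(x)_j = U^{a_j}x_j + U^{b_{j+1}}x_{j+1},
$$
so the vertical map $\alpha_j$ contributes to the same label and the slanted map $\beta_{j+1}$ feeds label $j+1$ into label $j$. The one structural tool I will use throughout is the ``top degree'' $\deg\colon\Ta\setminus\{0\}\to\ZZ_{\geq 0}$ sending $\sum_n c_nU^{-n}$ to $\max\{n:c_n\neq 0\}$; it satisfies $\deg(U^aw)=\deg(w)-a$ when $\deg(w)\geq a$, and $U^aw=0$ exactly when $w\in\tau(a)$. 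The two facts I will lean on are that every element of $X$ or $Y$ has only finitely many nonzero components, and that $a_j\to+\infty$ as $j\to-\infty$ while $b_j\to+\infty$ as $j\to+\infty$; together these let me convert ``infinite propagation of a nonzero degree'' into a contradiction with finite support.

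For the kernel I set $z_j:=U^{a_j}x_j$, so that $D(x)_j=0$ reads $z_j=-U^{b_{j+1}}x_{j+1}$. I first show $z_j=0$ for all $j$. For $j\geq 0$ I propagate to the right: if $z_{j_0}\neq 0$ for some $j_0\geq 0$, then using $b_{j+1}\geq a_{j+1}$ (the hypothesis $a_i\leq b_i$ for $i\geq 0$) one checks $z_{j+1}\neq 0$ with $\deg(z_{j+1})\geq\deg(z_j)$, hence $z_j\neq 0$, and so $x_{j+1}\neq 0$, for all $j\geq j_0$, contradicting finite support. Symmetrically, for $j<0$ I propagate to the left using $a_j\geq b_j$ to produce a never-ending tail of nonzero components. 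Thus $z_j\equiv 0$, which forces $U^{a_j}x_j=0$ and (from $z_{j-1}=0$) $U^{b_j}x_j=0$ for every $j$; equivalently $x_j\in\tau(a_j)\cap\tau(b_j)=\tau(\min(a_j,b_j))$. Conversely any such family lies in $\ker(D)$, which gives the stated $\FR$-module splitting $\ker(D)\cong\bigoplus_j\tau(\min(a_j,b_j))$ (the decomposition is by label and each summand is a $U$-submodule).

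For the cokernel there are two things to establish. To see that $(-1,\Ta)$ generates it, I propagate from both ends toward the middle: starting from zero on the far right and solving $U^{a_j}x_j=y_j-U^{b_{j+1}}x_{j+1}$ downward determines $x_j$ for $j\geq 0$ (each step solvable since $U^{a_j}$ is surjective, and forced to be zero once $j$ is large, where $a_j=0$ and $y_j=0$), while starting from zero on the far left and solving $U^{b_{j+1}}x_{j+1}=y_j-U^{a_j}x_j$ upward determines $x_j$ for $j\leq -1$. The only equation left unused is the one at $j=-1$, so $D(x)$ agrees with $y$ in every label except $-1$, and hence $y$ is congruent modulo $\mathrm{im}(D)$ to an element of $(-1,\Ta)$. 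To see that no nonzero element of $(-1,\Ta)$ is hit, suppose $D(x)=(-1,c)$. Running the degree argument above on the labels $j\neq -1$ (where $D(x)_j=0$) forces $U^{a_j}x_j=0$ for all $j\neq -1$ and $U^{b_k}x_k=0$ for all $k\neq 0$; in particular $x_{-1}\in\tau(b_{-1})$ and $x_0\in\tau(a_0)$. Then $c=U^{a_{-1}}x_{-1}+U^{b_0}x_0$, and the two sign hypotheses finish it: $a_{-1}\geq b_{-1}$ together with $x_{-1}\in\tau(b_{-1})$ gives $U^{a_{-1}}x_{-1}=0$, while $a_0\leq b_0$ together with $x_0\in\tau(a_0)$ gives $U^{b_0}x_0=0$, so $c=0$.

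I expect the main obstacle to be the bookkeeping in these propagation arguments rather than a single hard idea. One must check carefully that the top degree genuinely cannot decrease in the relevant direction — this is exactly where the sign hypotheses $a_i\leq b_i$ for $i\geq 0$ and $a_i\geq b_i$ for $i<0$ enter — and that the elements produced in the generation step really have finite support, which is guaranteed by $a_j=0$ for $j\geq N$, $b_j=0$ for $j\leq -N$, and the blow-up of $a_j,b_j$ at the two ends. The change of sign of $a_j-b_j$ at $j=0$ is precisely what blocks propagation through the single link $\beta_0\colon(0,\Ta)\to(-1,\Ta)$ and thereby localizes the entire cokernel at label $-1$.
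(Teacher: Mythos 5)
Your proposal is correct and takes essentially the same route as the paper: the same propagation-to-infinity argument against finite support (driven by the sign hypotheses $a_i \geq b_i$ for $i<0$ and $a_i \leq b_i$ for $i \geq 0$) rules out hitting a nonzero element of $(-1,\Ta)$ and shows the kernel splits into the per-label kernels $\tau(\min(a_i,b_i))$, while componentwise surjectivity of the $\alpha_i$, $\beta_i$ gives generation of the cokernel. You merely make explicit the bookkeeping (the substitution $z_j = U^{a_j}x_j$ and the two-sided solve localizing everything at label $-1$) that the paper's proof leaves implicit.
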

\begin{proof}
As all of the maps $\alpha_i, \ \beta_i$ are surjective, it is easy to see that the cokernel of $D$ is generated by the (equivalence classes of) elements in any one of $(i,\Ta) \subset Y$. Suppose $\eta = \{(s,\eta_s)\}_{s\in \mathbb{Z}} = D(\xi)$ with $\eta_s = 0$ for $s \neq -1$. Let $\xi = \{(s,\xi_s)\}_{s\in \mathbb{Z}}$.

Without loss of generality (by symmetry) we may assume that $\alpha_{-1}(\xi_{-1}) \neq 0$. Since $a_{-1} \geq b_{-1}$ it follows that $\beta_{-1}(\xi_{-1}) \neq 0$. Since $\eta_{-2} = 0 = \beta_{-1}(\xi_{-1}) + \alpha_{-2}(\xi_{-2})$, we have $\alpha_{-2}(\xi_{-2}) \neq 0 \Rightarrow \xi_{-2} \neq 0$. Continuing in the same way we conclude that $\xi$ is not supported on a finite set and hence no such $\xi$ can exist.

Similarly to the proof of Lemma \ref{lemma:surj_ker}, we want to show that the kernel of $D$ separates into the kernels of maps $\alpha_i + \beta_i$. This will finish the proof.

Now let $\xi = \{(s,\xi_s)\}_{s\in \mathbb{Z}} \in \ker(D)$. As before, without the loss of generality we assume there is $n<0$ such that $\beta_n(\xi_n) \neq 0$. Then $\alpha_{n-1}(\xi_{n-1}) \neq 0$, so $\beta_{n-1}(\xi_{n-1}) \neq 0$. Proceeding inductively we again reach a contradiction to $\xi$ being finitely supported.
\end{proof}

The previous lemma describes the action of $\hDT$ when $p/q<0$. We make this explicit in the next lemma.

\begin{lemma}
Let $p < 0, \ q > 0$. Then 

$$
\mathrm{coker}(\hDT) \cong \T,
$$
where $d = \dU$.
$$
\ker(\hDT) \cong \bigoplus_{n \geq 1} \tau_{d^-_n}(H_{\floor{\frac{i-np}{q}}})\bigoplus_{n \geq 0} \tau_{d^+_n}(V_{\floor{\frac{i+np}{q}}}).
$$

Here $d^+_0 = d+1-2H_{\floor{\frac{i}{q}}}$, $d^-_n = d^+_0 +2\sum_{k=0}^{n-1}(V_{\floor{\frac{i-kp}{q}}} - H_{\floor{\frac{i-(k+1)p}{q}}})$,\linebreak  $d^+_n = d^+_0+2\sum_{k=0}^{n-1}(H_{\floor{\frac{i+kp}{q}}} - V_{\floor{\frac{i+(k+1)p}{q}}})$.
\label{lemma:dtneg}
\end{lemma}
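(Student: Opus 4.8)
The plan is to realise $\hDT$ as a concrete instance of the map $D$ from Lemma~\ref{lemma:non-surj} and then simply read off the conclusion. First I would restrict the exact triangle to a single zig-zag subcomplex (i.e.\ to one $\mathrm{Spin}^c$-structure) and pass to the tower summands $\hATn\cong\Ta$. On these towers the map $\hDT$ is the sum of the vertical maps $\hv$ and the slanted maps $\hh$, which in the group labelled $n$ act as multiplication by $U^{V_{\floor{\frac{i+pn}{q}}}}$ and $U^{H_{\floor{\frac{i+pn}{q}}}}$ respectively, with $\hh$ raising the label by one. Since $p<0$, the index $\floor{\frac{i+pn}{q}}$ \emph{decreases} as $n$ grows, so I would reverse the labelling (replacing $n$ by $-n$), which turns the slanted maps into maps that \emph{lower} the label, exactly as required by Lemma~\ref{lemma:non-surj}. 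With this reversal I can take $\alpha_i\leftrightarrow\hv$ and $\beta_i\leftrightarrow\hh$, so that $a_i$ runs through the $V$'s and $b_i$ through the $H$'s. Verifying the hypotheses of Lemma~\ref{lemma:non-surj} — eventual vanishing of each family, divergence at the two ends, and the inequalities $a_i\ge b_i$ for $i<0$, $a_i\le b_i$ for $i\ge0$ — would follow directly from the listed properties of $V_k$ and $H_k$, using in particular that $V_k=H_{-k}$ together with monotonicity gives $V_k\le H_k$ precisely when $k\ge0$.

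With that dictionary in hand, Lemma~\ref{lemma:non-surj} immediately yields that $\mathrm{coker}(\hDT)$ is generated by a single bottom tower and is isomorphic to $\Ta$, and that $\ker(\hDT)\cong\bigoplus_i\tau(\min(a_i,b_i))$. Here the key observation is that $\min(V_k,H_k)=V_k$ for $k\ge0$ and $\min(V_k,H_k)=H_k$ for $k<0$, so the kernel splits into a family governed by the numbers $V_{\floor{\frac{i+np}{q}}}$ and a family governed by the numbers $H_{\floor{\frac{i-np}{q}}}$; translating the running index back through the reversed labelling is what produces precisely the ranges $n\ge0$ and $n\ge1$ in the statement. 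I would emphasise that, in contrast with the positive case of Corollary~\ref{cor:dtpos}, the tower now lives in the cokernel rather than the kernel, which is exactly why $\ker(\hDT)$ carries no extra $\Ta$ summand.

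It then remains to fix the absolute gradings. As in the proof of Corollary~\ref{cor:dtpos}, the grading on $\hBB$ is independent of the knot, so I would compute it from the unknot; this pins the bottom of the cokernel tower at $d=\dU$. The gradings $d^\pm_n$ of the kernel summands would be obtained by propagating this normalisation through the complex, using that $\hv$ and $\hh$ drop the grading by $1$ and multiplication by $U$ drops it by $2$: tracking the telescoping differences of consecutive $V$'s and $H$'s gives the stated recursions, with the base value $d^+_0=d+1-2H_{\floor{\frac{i}{q}}}$ coming from the single group adjacent to the cokernel tower. I expect the main obstacle to be precisely this bookkeeping: one must carry out the orientation reversal forced by $p<0$ consistently, so that the $\min$-assignment lands on the correct $V$- and $H$-indexed families over the stated ranges, while simultaneously keeping the absolute grading aligned so that the cokernel tower sits at $\dU$ and $d^+_0$ comes out as claimed. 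Everything else is the same linear algebra and grading argument already used in the positive case.
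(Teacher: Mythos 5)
Your proposal is correct and follows essentially the same route as the paper's own proof: the paper likewise reduces Lemma \ref{lemma:dtneg} to Lemma \ref{lemma:non-surj} by the index reversal (objects labelled $\floor{\frac{i+np}{q}}$ in the mapping cone correspond to those labelled $-n$ in Lemma \ref{lemma:non-surj}), takes $a_n = V_{\floor{\frac{i-np}{q}}}$, $b_n = H_{\floor{\frac{i-np}{q}}}$, fixes the cokernel grading by comparison with the unknot, and recovers $d^{\pm}_n$ from the grading shifts of $\hv$, $\hh$ (by $-1$) and $U$ (by $-2$). If anything, you spell out more than the paper does, e.g.\ the verification of the hypotheses of Lemma \ref{lemma:non-surj} via $V_k = H_{-k}$ and monotonicity, and the identification of $d^+_0$ from the group adjacent to the cokernel tower.
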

\begin{proof}
This is a straightforward application of Lemma \ref{lemma:non-surj}. Objects that are labelled with $\floor{\frac{i+np}{q}}$ in the mapping cone correspond to the ones labelled with $-n$ in Lemma \ref{lemma:non-surj}. In particular take, $a_n = V_{\floor{\frac{i-np}{q}}}$, $b_n = H_{\floor{\frac{i-np}{q}}}$. The grading comes from the fact that this works in the same way for the unknot (the towers in the cokernel coincide for all knots). Just as in Corollary \ref{cor:dtpos} we get the values of $d^{\pm}_n$ by the fact that the maps $\hv$, $\hh$ have grading $-1$ and the multiplication by $U$ has grading \nolinebreak $-2$.
\end{proof}

Just as Corollary \ref{cor:dtpos} is sufficient for positive surgeries on $L$-space knots, so is Lemma \ref{lemma:dtneg} for negative surgeries on $L$-space knots. We observe that in this case the Alexander polynomial also determines the Heegaard Floer homology of the surgeries. Lemma \ref{lemma:dtneg} also implies that negative $p/q$ surgeries on $L$-space knots have the same $d$-invariants as the lens space $L(p,q)$, so do not depend on the particular $L$-space knot. The next proposition extends our analysis to arbitrary knots.

\begin{prop}
Let $p < 0, \ q > 0$. As absolutely graded $\FR$-modules
$$
\mathrm{coker}(\hD) \cong \T.
$$
As absolutely graded vector spaces
$$
\HFr \cong \ker(\hD) \cong \ker(\hDT) \oplus \mathcal{A},
$$
where $\hAAr \cong \mathcal{A} \oplus \tau_{\delta}(N_{i,p/q})$, $\delta = \dU + \nolinebreak 1$ and $N_{i,p/q}$ is characterised by 
$$
d = d(S^3_{p/q}(K),i) = d(L(p,q),i) + 2N_{i,p/q}.
$$

In fact, $N_{i,p/q} = \max\{\overline{V}_{\floor{\frac{i}{q}}}, \overline{H}_{\floor{\frac{i+p}{q}}}\}$, where $\overline{V}_k, \ \overline{H}_k$ are for the mirror image of $K$ the same as $V_k, \ H_k$ are for $K$.
\label{prop:hfineg}
\end{prop}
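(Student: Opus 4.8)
The plan is to read off both conclusions from the exact triangle \eqref{extriang}. Since the surgery slope is negative, $\hD$ is no longer surjective, and the long exact sequence underlying \eqref{extriang} gives the short exact sequence of graded $\FR$-modules
$$
0 \longrightarrow \mathrm{coker}(\hD) \xrightarrow{\; i_* \;} \HF \xrightarrow{\; j_* \;} \ker(\hD) \longrightarrow 0 .
$$
Granting $\mathrm{coker}(\hD) \cong \Ta$ and the finite-dimensionality of $\ker(\hD)$ (both checked below, the latter because Lemma \ref{lemma:dtneg} presents $\ker(\hDT)$ as a finite sum of $\tau$'s and $\hAAr$ is finite-dimensional), the $U$-equivariant injection $i_*$ must carry the tower $\mathrm{coker}(\hD)$ isomorphically onto the tower summand of $\HF \cong \Ta \oplus \HFr$: the two towers agree in all sufficiently high gradings, where $\HFr$ vanishes, so $i_*$ is an isomorphism on the tower. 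The cokernel of $i_*$ is then exactly $\HFr$, giving $\HFr \cong \ker(\hD)$ as graded vector spaces.

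Next I would compute $\mathrm{coker}(\hD)$. Writing $\hD = \hDT \oplus \hDr$, Lemma \ref{lemma:dtneg} gives $\mathrm{coker}(\hDT) \cong \Ta_{\dU}$. Composing $\hDr$ with the projection $\hBB \to \mathrm{coker}(\hDT)$ yields a $U$-equivariant map $\overline{\phi}\colon \hAAr \to \mathrm{coker}(\hDT) \cong \Ta_{\dU}$. As $\hAAr$ is annihilated by a power of $U$, its image is a $U$-torsion submodule of the uniserial module $\Ta_{\dU}$, and every such submodule equals $\tau(N_{i,p/q})$ for a unique $N_{i,p/q}\geq 0$. By the third isomorphism theorem
$$
\mathrm{coker}(\hD) \cong \mathrm{coker}(\hDT)/\mathrm{im}(\overline{\phi}) \cong \Ta_{\dU}/\tau(N_{i,p/q}) \cong \Ta_{\dU + 2N_{i,p/q}} .
$$
Comparing with the tower of $\HF$, whose bottom lies in grading $d(\KS,i)$, forces $d(\KS,i) = \dU + 2N_{i,p/q}$, which is the claimed characterisation of $N_{i,p/q}$. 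Because $\hDr$ lowers grading by $1$, a graded complement in $\hAAr$ of $\mathcal{A} := \ker(\overline{\phi})$ maps isomorphically onto $\tau(N_{i,p/q})$ after a grading shift of $1$, so $\hAAr \cong \mathcal{A}\oplus \tau_{\delta}(N_{i,p/q})$ with $\delta = \dU+1$.

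For the kernel the linear-algebra argument of Proposition \ref{prop:hfipos} is unavailable, since $\hDT$ is not surjective; instead I would argue directly. A pair $(x,y)\in \hAAT\oplus\hAAr$ lies in $\ker(\hD)$ precisely when $\hDT(x) = -\hDr(y)$, which requires $\hDr(y)\in \mathrm{im}(\hDT)$, i.e. $y\in\ker(\overline{\phi}) = \mathcal{A}$; conversely each such $y$ admits a solution $x$, unique up to $\ker(\hDT)$. This produces a short exact sequence of graded vector spaces
$$
0 \longrightarrow \ker(\hDT) \longrightarrow \ker(\hD) \longrightarrow \mathcal{A} \longrightarrow 0 ,
$$
which splits in the graded category, giving $\ker(\hD)\cong\ker(\hDT)\oplus\mathcal{A}$ and completing the vector-space description of $\HFr$.

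It remains to identify $N_{i,p/q} = \max\{\overline{V}_{\floor{\frac{i}{q}}},\overline{H}_{\floor{\frac{i+p}{q}}}\}$, and here I would invoke the orientation-reversal duality $\KS = -S^3_{-p/q}(\overline{K})$ together with $d(-Y,\mathfrak{s}) = -d(Y,\mathfrak{s})$. Because $-p/q>0$, Corollary \ref{cor:dtpos} applied to the mirror $\overline{K}$ computes $d(S^3_{-p/q}(\overline{K}),\cdot) = d(L(-p,q),\cdot) - 2\max\{\overline{V}_{\floor{\frac{i}{q}}},\overline{H}_{\floor{\frac{i+p}{q}}}\}$; negating this and using $d(L(p,q),i) = -d(L(-p,q),\cdot)$ turns the relation $d(\KS,i) = \dU + 2N_{i,p/q}$ into the desired formula. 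I expect this final step to be the principal obstacle: it requires matching the (only implicitly defined) numbering of $\mathrm{Spin}^c$-structures under the combination of mirroring and orientation reversal, and verifying that the floor arguments $\floor{\frac{i}{q}}$ and $\floor{\frac{i+p}{q}}$ transform consistently on the two sides.
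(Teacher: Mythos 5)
Your proposal is correct and follows essentially the same route as the paper: the exact triangle giving $\mathrm{coker}(\hD)\hookrightarrow \HF \twoheadrightarrow \ker(\hD)$, identification of the cokernel with a tower whose bottom is shifted by $2N_{i,p/q}$ from $\dU$, a linear-algebra splitting of $\ker(\hD)$ as $\ker(\hDT)\oplus\mathcal{A}$ with $\mathcal{A}=\ker(\overline{\phi})$, and the mirror/orientation-reversal comparison with Corollary \ref{cor:dtpos} to evaluate $N_{i,p/q}$. Your packaging via the third isomorphism theorem and the explicit short exact sequence for the kernel is a mild tidying of the paper's element-level argument (the paper finds the largest $N$ with $U^{-N+1}$ in the image and then defers to the linear algebra of Proposition \ref{prop:hfipos}), and the $\mathrm{Spin}^c$-numbering caveat you flag is exactly what the paper uses implicitly in writing $d(S^3_{p/q}(K),i)=-d(S^3_{-p/q}(m(K)),i)$.
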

\begin{proof}
Recall that no element in $(-1, \hB)$ is in the image of the map $\hDT$. Since $\hAAr$ lies in the kernel of the multiplication by a big enough power of $U$, so is its image under $\hD$. Hence $\hD$ only 'chops off' a finite piece of the tower. More precisely, let $N$ be the largest integer such that $U^{-N+1} \in (-1, \hB)$ appears as a term of some element $\eta$ in the image of $\hD$.

We claim that then $U^{-N+k}$ is also in the image for all $k \geq 1$. This is easily seen by an inductive argument: 1 is in the image, as $1 = U^{N-1}\eta$; $U^{-1}$ is, because 1 is and $U^{N-2}\eta$ is. Proceeding in the same way we establish the claim.

Thus the cokernel of $\hD$ is generated by $U^{-N-k}$ for $k \geq 0$, none of which are in its image. Thus the map $i_*$ from the exact triangle \eqref{extriang} injects $\langle\{U^{-N-k}\}_{k \geq 0}\rangle\subrangle{\FF}$ into $\HF$. Since $U^{-N+1} \in (-1, \hB)$ is in the image of $\hD$, it is in the kernel of $i_*$ and we have $U\cdot i_*(U^{-N}) = 0$. Hence the image of $i_*$ is exactly the tower $\T$ with $d = \dK$. By Lemma \ref{lemma:dtneg}, $1 \in (-1, \hB)$ has grading $\dU$, so $\dK = \dU + 2N$.

By the First Isomorphism Theorem and exactness of \eqref{extriang} 
$$
\ker(\hD) = \mathrm{im}(j_*) \cong \HF / \ker(j_*) = \HF / \mathrm{im}(i_*).
$$

Since $\mathrm{im}(i_*)$ is the tower, we have
$$
\ker(\hD) \cong \HF / \mathrm{im}(i_*) \cong \HFr.
$$

The rest is just linear algebra again. We can split $\hAAr$ into the part that goes isomorphically to the base of the tower, which is not in the image of $\hDT$ (i.e. $(-1, \hB) \cap \mathrm{im}(\hD)$) and the part that goes into the image of $\hDT$. We then proceed as in the proof of Proposition \ref{prop:hfipos}.

The fact that $N_{i,p/q} = \max\{\overline{V}_{\floor{\frac{i}{q}}}, \overline{H}_{\floor{\frac{i-p}{q}}}\}$ follows from taking the mirror image of $K$ and comparing with the already obtained formula for the correction terms from Corollary \ref{cor:dtpos}. We have

\begin{multline*}
2N_{i,p/q} = d(S^3_{p/q}(K),i) - d(L(p,q),i) = \\
 = - d(S^3_{-p/q}(m(K)),i) + d(L(-p,q),i) = \max\{\overline{V}_{\floor{\frac{i}{q}}}, \overline{H}_{\floor{\frac{i+p}{q}}}\},
\end{multline*}

where $m(K)$ is the mirror image of $K$.
\end{proof}

We can also express the total rank of $\HFr$ as follows.

\begin{prop}
We have
$$
\dim(\HFFr) = q\delta(K) + qV_0 + 2q\sum_{i=1}^{g-1}V_i - \sum_{i=0}^{p-1}N_{i,p/q}.
$$
\label{prop:rankneg}
\end{prop}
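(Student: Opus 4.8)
The plan is to run the same bookkeeping as in Proposition \ref{prop:rankpos}, but with the negative-slope inputs. Concretely, I would sum $\dim(\HFr)$ over the $|p|$ \Sps\ and substitute the decomposition from Proposition \ref{prop:hfineg}, namely $\HFr \cong \ker(\hDT) \oplus \mathcal{A}$ together with $\dim(\mathcal{A}) = \dim(\hAAr) - N_{i,p/q}$. This immediately gives
$$
\dim(\HFFr) = \sum_{i}\dim(\ker(\hDT)) + \sum_{i}\dim(\hAAr) - \sum_{i}N_{i,p/q}.
$$
The last sum is already the subtracted term appearing in the statement, so the whole problem reduces to evaluating the first two sums. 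Note the conceptual difference from the positive case: there the correction term $-\max(\ldots)$ lived inside $\ker(\hDT)$ because of the case distinction in Corollary \ref{cor:dtpos}, whereas here Lemma \ref{lemma:dtneg} has no case split and the correction $-N_{i,p/q}$ instead comes from removing the $\tau_\delta(N_{i,p/q})$ summand of $\hAAr$ via Proposition \ref{prop:hfineg}.

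The term $\sum_i \dim(\hAAr)$ is handled exactly as in Proposition \ref{prop:rankpos}. Since $\hAAr = \bigoplus_n (n,\hArn)$, and as $i$ runs over a complete residue system and $n$ over $\ZZ$ the indices $\floor{\frac{i+np}{q}}$ sweep out every integer with multiplicity $q$, I would conclude $\sum_i \dim(\hAAr) = q\sum_{k \in \ZZ}\dim(\hAr) = q\delta(K)$, producing the first term of the formula.

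For $\sum_i \dim(\ker(\hDT))$ I would read the reduced summands off Lemma \ref{lemma:dtneg}: they contribute a $V$-family indexed by $n \geq 0$ and an $H$-family indexed by $n \geq 1$. Summing over the \Sps\ and reindexing, the $V$-family covers all non-negative integer indices (each floor-value attained $q$ times), giving $q\sum_{k\geq 0}V_k = qV_0 + q\sum_{k=1}^{g-1}V_k$ once we use $V_k = 0$ for $k \geq g$; the $H$-family covers all negative indices, giving $q\sum_{k\leq -1}H_k = q\sum_{k=1}^{g-1}V_k$ via $H_k = V_{-k}$. Adding these yields $qV_0 + 2q\sum_{i=1}^{g-1}V_i$, which together with the previous paragraph completes the identity.

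The main obstacle is purely combinatorial: for $p<0$ one must verify that the index sets $\{\floor{\frac{i\pm np}{q}}\}$ from Lemma \ref{lemma:dtneg} genuinely tile $\ZZ_{\geq 0}$ and $\ZZ_{<0}$ with uniform multiplicity $q$ once $i$ ranges over a full residue system, and—crucially—that the $n=0$ term of the $V$-family, carrying $V_{\floor{i/q}}$, is counted while the $H$-family starts at $n\geq 1$. This asymmetry is exactly what makes the coefficient of $V_0$ equal to $q$ rather than $2q$, while every interior $V_i$ receives the doubled coefficient; getting this boundary accounting right is the only delicate point, the rest being a transcription of the positive-slope argument.
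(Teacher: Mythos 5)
Your proposal is correct and is exactly the argument the paper intends: its entire proof of Proposition \ref{prop:rankneg} is the single sentence that it is ``virtually the same as for Proposition \ref{prop:rankpos}'', and you have simply written out that transcription, correctly substituting Proposition \ref{prop:hfineg} and Lemma \ref{lemma:dtneg} for Proposition \ref{prop:hfipos} and Corollary \ref{cor:dtpos}. Your boundary accounting (the $V$-family tiling $\ZZ_{\geq 0}$ and the $H$-family tiling $\ZZ_{<0}$, each with multiplicity $q$, so that $V_0$ receives coefficient $q$ while interior $V_i$ receive $2q$) is precisely the bookkeeping the paper leaves implicit.
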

\begin{proof}
The proof is virtually the same as for Proposition \ref{prop:rankpos}.
\end{proof}

\subsection{Zero surgeries}

We now treat the case of zero surgeries. For the case of $L$-space knots the formula for the Heegaard Floer homology of the zero surgery was derived in \cite[Theorem 7.2]{OSzAbsGr}. The main tool we use is \cite[Theorem 9.19]{OSzPropApp}:
\begin{theorem}[Ozsv\'ath-Szab\'o]
There is a $U$-equivariant exact triangle
\begin{center}
\begin{equation}
{\setlength\mathsurround{0pt}
\begin{tikzcd}
HF^+(S^3) \arrow{r}{\F}\arrow[leftarrow]{rd}[swap]{\Fm}
&\bigoplus\limits_{j \equiv i (mod \ m)} HF^+(S^3_0(K),j) \arrow{d}{\Fo}\\
&HF^+(S^3_m(K),i).
\label{extriang1}
\end{tikzcd}
}
\end{equation}
\end{center}
Moreover, the map $\Fm$ is equal to the one induced by the surgery cobordism.
\end{theorem}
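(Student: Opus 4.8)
Since this statement is quoted from \cite{OSzPropApp}, one could simply cite it; to sketch a proof, the plan is to realise it as an instance of the surgery exact sequence and to build it from the holomorphic-triangle machinery for Heegaard multi-diagrams. First I would choose a doubly-pointed Heegaard diagram for $(S^3, K)$ and a system of attaching curves $(\Sigma, \alpha, \beta, \gamma, w)$ subordinate to a framing of $K$, arranged so that the three slopes realised on $\partial \nu(K)$ are the meridian, the Seifert longitude and $\mu + m\lambda$. With this choice the pair $(\alpha,\beta)$ presents $S^3$, the pair realising slope $0$ presents $S^3_0(K)$, and the pair realising slope $m$ presents $S^3_m(K)$. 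The three maps $\F$, $\Fo$ and $\Fm$ are then defined as the $+$-flavour $2$-handle cobordism maps, each a sum of holomorphic-triangle counts over the $\mathrm{Spin}^c$-structures on the relevant cobordism.

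Second, I would show that the three consecutive compositions $\Fo \circ \F$, $\Fm \circ \Fo$ and $\F \circ \Fm$ are chain homotopic to zero. Each composition is computed by a count of holomorphic quadrilaterals for the associated quadruple diagram, and the standard ``small quadrilateral'' model computation produces the required null-homotopies; this is also where $U$-equivariance is read off, since the $U$-action is recorded by the multiplicity of the holomorphic curves at the basepoint $w$. Given null-homotopic compositions, the statement reduces to the algebraic exact triangle detection lemma: it is enough to exhibit, for one of the three maps, a null-homotopy of the next composition that identifies the mapping cone of that map with the third complex up to quasi-isomorphism.

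The main obstacle is precisely this last verification. Because the slopes $0$ and $m$ have geometric intersection number $m$, the curve realising slope $m$ is obtained from the longitude by winding $m$ times in a neighbourhood of the meridian, so there are $m$ intersection points in the winding region. The key local calculation is that, after sufficiently stretching the neck along this winding region, the relevant triangle-counting map becomes a quasi-isomorphism onto the third group, which verifies the hypothesis of the exact triangle lemma. These $m$ winding intersection points are exactly what forces the $\mathrm{Spin}^c$-structures on $S^3_0(K)$ (of which there are infinitely many, indexed by $\ZZ$, since $H_1(S^3_0(K)) \cong \ZZ$) to be bundled into cosets modulo $m$: tracking first Chern classes of the triangle homotopy classes through the cobordisms shows that exactly those $HF^+(S^3_0(K),j)$ with $j \equiv i \pmod m$ feed into $HF^+(S^3_m(K),i)$, which is the indexing in the statement.

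Finally, the ``moreover'' clause follows from the construction of $\Fm$: by definition it is the sum of triangle counts for the $2$-handle cobordism obtained from the surgery, so identifying this cobordism with the surgery cobordism from $S^3_m(K)$ to $S^3$ and invoking the composition law for cobordism maps shows $\Fm$ agrees with the geometrically induced map. Absolute gradings are then pinned down by the grading-shift formula for $2$-handle cobordism maps once the $\mathrm{Spin}^c$-assignments above are fixed; this is routine bookkeeping rather than a substantive difficulty.
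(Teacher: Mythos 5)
The paper offers no proof of this statement at all: it is imported verbatim as an external tool, with the citation \cite[Theorem 9.19]{OSzPropApp}, so your opening line --- that one could simply cite Ozsv\'ath--Szab\'o --- coincides exactly with what the paper does. Your sketch is also a fair reconstruction of the cited argument (triangle-count maps from a Heegaard triple subordinate to the slopes $\infty$, $0$ and $m$; null-homotopies of consecutive compositions from quadrilateral counts, with $U$-equivariance read off the basepoint multiplicity; exactness via the triangle-detection principle, whose quasi-isomorphism hypothesis is checked by a local model computation; and $\mathrm{Spin}^c$ bookkeeping over the $m$ intersection points in the winding region producing the grouping of $j \equiv i \pmod m$), the only caveat being that the ``detection lemma'' packaging postdates the original proof, which established exactness more directly --- nothing in your outline would fail.
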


Given $i$ we can make $m$ in \eqref{extriang1} so big that 
$$
\bigoplus\limits_{j \equiv i (mod \ m)} HF^+(S^3_0(K),j) = HF^+(S^3_0(K),i).
$$
From now on we assume that $m$ is at least that large.

The group $\hAo \cong \hATo \oplus \hAro$ is relatively $\ZZ$-graded. If we fix an absolute $\mathbb{Q}$-grading for any element of $\hAo$, the relative grading will fix the absolute grading for all the elements. In  particular, it will absolutely grade $\hAro$.

In the statement of the next proposition (but not necessarily in the proof), we use the grading of $\hAro$ induced by grading the tower $\hATo$ in such a way that the grading of 1 is $\frac{1}{2}-2V_0$.

\begin{prop}
Let $k \neq 0$. Then as $\ZZ/2\ZZ$-graded vector spaces
\begin{equation}
HF^+(S^3_0(K),k) \cong \tau(V_{|k|}) \oplus \hAr.
\end{equation}

As absolutely $\mathbb{Q}$-graded vector spaces
\begin{equation}
HF^+(S^3_0(K),0) \cong \mathcal{T}^+_{-\frac{1}{2}+2\overline{V}_0} \oplus \mathcal{T}^+_{\frac{1}{2}-2V_0}\oplus \mathcal{A}.
\end{equation}

Here $\mathcal{A} \oplus \tau_{1/2}(\overline{V_0}) \cong \hAro$ as absolutely graded vector spaces, where the absolute grading of $\hAro$ is as described above.
\label{zerosurg}
\end{prop}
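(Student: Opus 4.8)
The plan is to run the exact triangle \eqref{extriang1} against the large-surgery identification of Theorem \ref{thm:large surgeries}. First I would fix the \Sp\ to be $k$ and take $m$ large (as in the paragraph following \eqref{extriang1}), so that the middle group collapses to $HF^+(S^3_0(K),k)$ and the third group becomes $\hA\cong\hAT\oplus\hAr$. The triangle then reads
$$
\Ta \xrightarrow{\F} HF^+(S^3_0(K),k) \xrightarrow{\Fo} \hA \xrightarrow{\Fm} \Ta ,
$$
and the whole computation is governed by the cobordism map $\Fm$ on the tower $\hAT\cong\Ta$. Since $\Fm$ is $U$-equivariant and the image of $HF^\infty\to HF^+$ is exactly the tower summand, I would pin this down by a rank count at the $\infty$-level, using that $HF^\infty(S^3_0(K),k)$ vanishes for $k\neq 0$ (non-torsion \Sp) and has rank two for $k=0$ (torsion \Sp). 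Comparing with the maps $\cv,\ch$, whose tower-powers are $V_k,H_k$, this identifies $\Fm$ on the tower as $U^{V_k}-U^{H_k}$ up to the standard automorphism of $\Ta$: for $k\neq 0$ it is surjective with kernel $\tau(\min(V_k,H_k))=\tau(V_{|k|})$, and for $k=0$, where $V_0=H_0$, it vanishes identically.

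For $k\neq 0$ the vanishing of $HF^\infty(S^3_0(K),k)$ confirms that $\Fm$ is surjective, so the triangle collapses to $HF^+(S^3_0(K),k)\cong\ker(\Fm)$. Applying the linear-algebra splitting used in the proof of Proposition \ref{prop:hfipos} to $\Fm=(\Fm|_{\hAT})\oplus(\Fm|_{\hAr})$ with the first summand surjective gives $\ker(\Fm)\cong\ker(\Fm|_{\hAT})\oplus\hAr\cong\tau(V_{|k|})\oplus\hAr$ as $\ZZ/2\ZZ$-graded vector spaces, which is the first claim.

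For $k=0$ the rank count instead gives $\Fm^\infty=0$, so $\Fm$ annihilates $\hATo$ and is supported on the finite part $\hAro$. The triangle now yields the short exact sequence
$$
0 \to \mathrm{coker}(\Fm) \to HF^+(S^3_0(K),0) \to \ker(\Fm) \to 0 .
$$
Because $\Fm$ kills $\hATo$, its image is a finite submodule $\tau(N)\subseteq\Ta$ for some $N\geq 0$, so $\mathrm{coker}(\Fm)\cong\Ta$ contributes one tower, while $\ker(\Fm)=\hATo\oplus\ker(\Fm|_{\hAro})$ contributes a second tower together with the reduced part $\mathcal{A}:=\ker(\Fm|_{\hAro})$; by rank-nullity $\hAro\cong\mathcal{A}\oplus\tau(N)$ with $\tau(N)\cong\mathrm{im}(\Fm|_{\hAro})$. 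The normalization in the statement places the bottom of $\hATo$ at grading $\tfrac12-2V_0$, so the $\ker(\Fm)$-tower is exactly $\mathcal{T}^+_{\frac12-2V_0}$.

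The remaining and most delicate point is to identify the bottom grading of the cokernel tower (equivalently $N$), which must come out to be $-\tfrac12+2\overline{V}_0$ (resp. $N=\overline{V}_0$) and hence involves the \emph{mirror} quantity. I would extract this not from the triangle for $K$ alone but from the orientation-reversal symmetry $S^3_0(m(K))=-S^3_0(K)$: the grading $\tfrac12-2V_0$ computed above is precisely the correction term $d_{1/2}$, and applying this formula to $m(K)$ together with the behaviour of correction terms under orientation reversal gives $d_{-1/2}(S^3_0(K))=-\bigl(\tfrac12-2\overline{V}_0\bigr)=-\tfrac12+2\overline{V}_0$. This fixes the second tower as $\mathcal{T}^+_{-\frac12+2\overline{V}_0}$ and forces $N=\overline{V}_0$, so that $\mathcal{A}\oplus\tau_{1/2}(\overline{V}_0)\cong\hAro$ with the stated grading. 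I expect this interplay between the exact triangle and the mirror symmetry, rather than either of the two routine linear-algebra splittings, to be the main obstacle, since it is what makes both $V_0$ and $\overline{V}_0$ appear simultaneously.
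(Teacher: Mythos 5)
Your proposal follows the same architecture as the paper's proof: the exact triangle \eqref{extriang1} with $m$ large, the identification of the third group with $\hA$, the linear-algebra splitting of Proposition \ref{prop:hfipos} for $k\neq 0$, the short exact sequence $0 \to \mathrm{coker}(\FOm) \to HF^+(S^3_0(K),0) \to \ker(\FOm) \to 0$ for $k=0$, and mirror symmetry to produce $\overline{V}_0$. Your $\infty$-level rank counts are a legitimate variant: they correctly give surjectivity of $\Fm$ for $k\neq 0$ and the vanishing of $\FOm$ on the tower for $k=0$ (the paper instead gets the former by citing the proof of Theorem 7.2 of \cite{OSzAbsGr} and the latter from exactness, after showing $\FOo$ carries $\mathcal{T}^+_{\frac12-2V_0}$ isomorphically onto the tower of $HF^+(S^3_m(K),0)$). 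But your identification of $\Fm$ on the tower as ``$U^{V_k}-U^{H_k}$ up to the standard automorphism'' is not correct as stated: $\Fm$ is a sum of homogeneous components, one per \Sp\ on the cobordism, with grading shifts $\frac{1-m(2k-1)^2}{4}$; for $k \neq 0$ only one of these contributes a power of $U$ bounded in $m$ (giving kernel $\tau(V_{|k|})$), while the subdominant component contributes an exponent that grows with $m$, not $H_k$. In particular, for $k=0$ the claimed identical vanishing via cancellation of $U^{V_0}-U^{H_0}$ is unjustified over an arbitrary field $\FF$ (the relative sign of the two conjugate-symmetric components is not pinned down); your conclusion survives only because the rank count covers it independently.

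The genuine gap is in the absolute gradings, which is exactly where the paper's proof does its real work. Rank--nullity gives $\hAro \cong \mathcal{A} \oplus \tau(N)$ only as \emph{ungraded} vector spaces: since $\FOm$ is \emph{not} homogeneous, neither $\tau(N)\cong\mathrm{im}(\FOm|_{\hAro})$ as graded spaces nor the placement $\tau_{1/2}(\overline{V}_0)$ follows from what you wrote. The paper's proof spends its second half on precisely this point: taking $m$ large so that all components of $\FOm$ with shifts $\frac{1-m(2k-1)^2}{4}$, $k\neq 0,1$, vanish on $\hAro$ (because they would land in negative gradings), so that $\FOm|_{\hAro}$ is homogeneous of degree $\frac{1-m}{4}$, forcing the bottom of the $\tau(\overline{V}_0)$-piece to sit at grading $\frac{m-1}{4}$, and then using the homogeneous shift $\frac{m-3}{4}$ of $\FOo$ (\cite[Lemma 7.11]{OSzAbsGr}) to transport $\mathcal{A}$ into $HF^+(S^3_0(K),0)$ with the stated grading. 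Relatedly, your assertion that ``the normalization in the statement places the bottom of $\hATo$ at grading $\frac12-2V_0$, so the $\ker(\FOm)$-tower is exactly $\mathcal{T}^+_{\frac12-2V_0}$'' conflates a bookkeeping convention on $\hAro$ with a computation in the intrinsic grading of $HF^+(S^3_0(K),0)$: one must combine the shift of $\FOo$ with the large-surgery tower bottom $\frac{m-1}{4}-2V_0$ from Corollary \ref{cor:dtpos}, or, as the paper does, invoke \cite[Proposition 4.12]{OSzAbsGr} together with the Ni--Wu $d$-invariant formula. Your mirror-symmetry derivation of $d_{-1/2}=-\frac12+2\overline{V}_0$ is sound and is morally the same input the paper uses, but contrary to your closing remark it is not the main obstacle -- the inhomogeneity of $\FOm$ is, and your proposal does not address it.
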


\begin{proof}
The first part is immediate from \cite[proof of Theorem 7.2]{OSzAbsGr}. Note that $HF^+(S^3_m(K),k) \cong \mathcal{T} \oplus \hAr$ (recall that we are assuming that $m$ is large). In \cite[proof of Theorem 7.2]{OSzAbsGr} Ozsv\'ath and Szab\'o show that the restriction of $\Fm$ to the tower part is surjective and its kernel is $\mathbb{F}[U^{-1}]/U^{-V_{|k|}}$. So we are done by the same elementary linear algebra as in the proof of Proposition \ref{prop:hfipos}.

For the second part, note that we can assign absolute gradings as we are dealing with a torsion \Sp. As shown in \cite[Theorem 10.4]{OSzPropApp}, $HF^{\infty}(S^3_0(K),0)$ is a direct sum of two copies of $\ZZ[U, U^{-1}]$ that lie in different relative $\ZZ/2\ZZ$-gradings. This is equivalent to saying that the difference of the absolute gradings between the elements from the different summands is always odd. As in the case of rational homology spheres, the exact sequence

$$
\ldots \to HF^-(Y, \mathfrak{s}) \to HF^{\infty}(Y, \mathfrak{s}) \to HF^+(Y, \mathfrak{s}) \to \ldots
$$

establishes that

$$
HF^+(S^3_0(K),0) \cong \mathcal{T}_{d_1} \oplus \mathcal{T}_{d_2} \oplus \mathcal{A},
$$

where $\mathcal{A} = HF_{red}(S^3_0(K),0)$ is a finitely generated $\FR$-module in the kernel of some large enough power of $U$.

In fact, combining \cite[Proposition 4.12]{OSzAbsGr} with the $d$-invariant formula of Ni-Wu stated in Corollary \ref{cor:dtpos} we obtain $d_1 = -\frac{1}{2}+2\overline{V}_0$, $d_2 = \frac{1}{2}-2V_0$.

The last step in the proof is determining $\mathcal{A}$. The maps $\FO$ and $\FOo$ from the exact triangle \eqref{extriang1} have gradings $-\frac{1}{2}$ and $\frac{m-3}{4}$ respectively by \cite[Lemma 7.11]{OSzAbsGr}. The map $\FOm$ is not graded but is a sum of graded maps, and the set of grading shifts of these maps is $\{\frac{1-m(2k-1)^2}{4}\}_{k \in \ZZ}$.

Since $HF^+(S^3) \cong \To$ and the grading of the map $\FO$ is $-\frac{1}{2}$, $\mathcal{T}^+_{\frac{1}{2}-2V_0}$ is not in the image of $\FO$, hence the map $\FOo$ is an isomorphism between $\mathcal{T}^+_{\frac{1}{2}-2V_0}$ and the tower part of $HF^+(S^3_m(K),0)$, which is equal to $\mathcal{T}^+_{\frac{m-1}{4}-2V_0}$ by Proposition \ref{prop:hfipos}. Hence the restriction of the map $\FOm$ to the tower part of $HF^+(S^3_m(K),0)$ is zero. As in the proof of Proposition \ref{prop:hfineg}, the restriction of $\FOm$ to $HF_{red}(S^3_m(K),0)$ maps a subgroup of the form $\tau(N)$ isomorphically to the base of the tower $HF^+(S^3) \cong \To$. By the grading considerations again we see that $N = \overline{V}_0$.

Recall from Proposition \ref{prop:hfipos} that $HF^+(S^3_m(K),0) \cong \mathcal{T}^+_{\frac{m-1}{4}-2V_0} \oplus \hAro$ (the grading here is such that the relative grading is as it should be). Let the maximal grading of a non-trivial element in $\hAro$ be $\frac{m-1}{4}-2V_0 + C$.

Consider one homogeneous summand of $\FOm$ with grading $\frac{1-m(2k-1)^2}{4}$. It maps the element of $\hAro$ of maximal grading to an element with grading 
$$
\frac{m-1}{4}-2V_0 + C + \frac{1-m(2k-1)^2}{4} = \frac{m(1-(2k-1)^2)-8V_0+4C}{4}.
$$

If $k \neq 0, 1$ we have $1-(2k-1)^2 < 0$ and so by making $m$ sufficiently large we can make sure that $\frac{m(1-(2k-1)^2)-8V_0+4C}{4} < 0$ and as all non-trivial elements in the image have grading $\geq 0$ this means that all components with $k \neq 0, 1$ are zero.

Thus we can assume that the map $\FOm$ has grading $\frac{1-m}{4}$. As discussed above the map $\FOm$ maps a subgroup of $\hAro$ of the form $\tau(\overline{V}_0)$ isomorphically to such a subgroup at the lower end of the tower $HF^+(S^3) \cong \To$. Therefore 1 in $\tau(\overline{V}_0)$ must have grading $\frac{m-1}{4}$.

The rest of $\hAro$ will be in the kernel of $\FOm$ and thus in the image of $\mathcal{A}$ by $\FOo$. Now noting that the grading of the map $\FOo$ is $\frac{m-3}{4}$ finishes the proof.
\end{proof}

Torsion coefficients of the Alexander polynomial of a knot describe the Euler characteristics of the groups $\hAr$, which we can see for example by combining Theorems 10.14 and 10.17 of \cite{OSzPropApp} (though a more direct proof is also possible). This has also been shown in \cite[Lemma 3.2]{niZhangChar}.

\begin{lemma} For $k\geq 0$
\begin{equation}
t_k(K) = V_k + \chi (\hAr).
\end{equation}
\label{lemma:euler}
\end{lemma}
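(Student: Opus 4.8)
The plan is to establish the identity $t_k(K) = V_k + \chi(\boldsymbol{A}^{red}_k(K))$ by relating the torsion coefficient, which is a combinatorial quantity computed from the Alexander polynomial, to the Euler characteristic of the hook complex $\boldsymbol{A}^+_k(K)$. The key observation is that both sides can be understood through the graded Euler characteristic of $CFK^{\infty}(K)$, which recovers the (symmetrised) Alexander polynomial. First I would recall that the torsion coefficients $t_k(K) = \sum_{j \geq 1} j\, a_{k+j}$ are, by the standard relation between the Alexander polynomial and the torsion of the Floer complex, exactly the Euler characteristics of the truncated complexes that compute $HF^+$ of large surgeries in the relevant \Sp. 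Concretely, $t_k(K) = \chi\bigl(HF^+(S^3_N(K), [k])\bigr)$ for $N$ large, which follows from the Euler-characteristic computation in \cite[Theorem 10.17]{OSzPropApp} (or directly from the relation $\chi(HF^+(Y,\mathfrak{s})) = \pm t_k$ for torsion \Sps).

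The main calculation is then to compute $\chi(\hA)$ in two ways. On the one hand, by Theorem \ref{thm:large surgeries} we have $\hA \cong HF^+(K_m, i)$ for large $m$, so $\chi(\hA) = \chi(HF^+(K_m,i)) = t_k(K)$ by the previous paragraph. On the other hand, using the decomposition $\hA \cong \hAT \oplus \hAr$ from the text, we may compute the Euler characteristic summand by summand. The crucial point is the $\ZZ/2\ZZ$-grading convention fixed earlier: each tower $\hAT \cong \Ta$ is declared to lie entirely in even grading, so it contributes a definite amount to $\chi(\hA)$. The plan is to show this tower contributes exactly $-V_k$ (or $+V_k$, depending on sign conventions), which is where the power $V_k$ of $U$ governing the map $\hv|_{\hAT}$ enters. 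Indeed, the tower $\Ta$ itself is infinite, but its Euler characteristic is regularised through the large-surgery identification: the grading shift by $2V_k$ between the tower in $\hAT$ and the standard $\Ta$ accounts precisely for the $V_k$ term.

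The cleanest route is to avoid any delicate regularisation of the infinite tower and instead argue directly with $HF^+$ of the large surgery. I would write $HF^+(K_m, i) \cong \Ta \oplus \hAr$ (via Theorem \ref{thm:large surgeries}), take Euler characteristics, and use that $\chi(HF^+(K_m,i))$ differs from the lens-space value by a correction measured by $V_k$. More precisely, comparing with the unknot (for which $\hAr = 0$ and $t_k = 0$ for $k \geq 0$), the tower of $K_m$ sits at a $d$-invariant shifted by $-2V_k$ relative to the unknot's, and this grading shift is exactly what converts into the additive $V_k$ in the Euler characteristic once one fixes the $\ZZ/2\ZZ$-grading so that the tower is even. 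Thus $t_k = V_k + \chi(\hAr)$.

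\emph{The hard part} is making the Euler-characteristic bookkeeping rigorous, since $\Ta$ is infinite-dimensional and $\chi$ is only well-defined after truncation or after passing to the reduced part. The careful statement is that $t_k$ equals the Euler characteristic of the finite-dimensional part of $HF^+(K_m,i)$ together with a boundary contribution from the tower, and one must verify that the tower's contribution is exactly $V_k$ with the correct sign under the fixed $\ZZ/2\ZZ$-grading. I expect this sign/normalisation check — tying the grading shift $2V_k$ of the tower to a $+V_k$ in the Euler characteristic — to be the only genuinely subtle point; everything else is the linear algebra already used in Propositions \ref{prop:hfipos} and \ref{zerosurg}.
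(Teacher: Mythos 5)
Your strategy founders at its first step, and the point you defer as ``the only genuinely subtle point'' is in fact the entire content of the lemma. The linchpin claim that $t_k(K) = \chi(HF^+(S^3_N(K),[k]))$ for large $N$ is not merely unproven but ill-defined: $S^3_N(K)$ is a rational homology sphere, so $HF^+(S^3_N(K),[k]) \cong \Ta \oplus \hAr$ contains the tower, which is infinite-dimensional with all of its generators in a single $\ZZ/2\ZZ$-parity, so no truncation-independent Euler characteristic exists. The results you invoke from \cite{OSzPropApp} (Theorems 10.14 and 10.17) compute Euler characteristics of $HF^+$ of the \emph{zero}-surgery $S^3_0(K)$, where for $k \neq 0$ the relevant \Sp\ is non-torsion and $HF^+(S^3_0(K),k)$ is honestly finite-dimensional; they say nothing about large surgeries. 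Your proposed regularisation also fails on its own terms: under the convention needed for the statement of the lemma, the tower $\hAT$ is declared to lie entirely in even grading, so shifting it by the even amount $2V_k$ in the absolute grading is invisible to any $\ZZ/2\ZZ$-graded Euler characteristic --- a $d$-invariant shift cannot ``convert into'' an additive $V_k$ by grading bookkeeping alone. Making such a renormalisation rigorous would require the invariant $\chi(HF_{red}(Y,\mathfrak{s})) - \frac{1}{2}d(Y,\mathfrak{s})$ and its behaviour under surgery (Casson--Walker theory), a substantially heavier input that you neither cite nor establish.

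The repair is to run your Euler-characteristic comparison on the zero surgery rather than the large surgery, which is exactly the route the paper takes (it gives a citation-level proof: combine Theorems 10.14 and 10.17 of \cite{OSzPropApp}, as in \cite[Lemma 3.2]{niZhangChar}). Concretely, by Proposition \ref{zerosurg} one has $HF^+(S^3_0(K),k) \cong \tau(V_{|k|}) \oplus \hAr$ as $\ZZ/2\ZZ$-graded vector spaces for $k \neq 0$, and this group is finite-dimensional; the cited theorems identify its Euler characteristic with the torsion coefficient $t_k(K)$ (up to the fixed sign conventions, with Theorem 10.17 handling the torsion \Sp, i.e.\ the case $k=0$), while $\tau(V_k)$, lying in even grading, contributes exactly $V_k$. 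Note where the $V_k$ really comes from: it is the finite piece $\tau(V_k)$ cut off from the tower by the surgery exact triangle relating $S^3$, $S^3_0(K)$ and $S^3_m(K)$ --- not a regularised contribution of the infinite tower itself. With that substitution your remaining steps (the linear algebra of Propositions \ref{prop:hfipos} and \ref{zerosurg}) go through as you describe.
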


Recall that the absolute $\ZZ/2\ZZ$ grading used to calculate the Euler characteristics here is fixed by the requirement that the tower $\hAT$ lies entirely in grading $0$.

\section{Proof of Theorem \ref{thm:alternating}}
\label{sec:alternating}

In this section we prove

{
\renewcommand{\thetheorem}{\ref{thm:alternating}}
\begin{theorem}
Let $Y$ be a $3$-manifold. There are at most finitely many alternating knots $K \subset S^3$ such that $Y = \KS$.
\end{theorem}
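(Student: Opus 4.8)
The plan is to fix $Y$, bound the complexity of any alternating $K$ with $Y=\KS$, and then combine two facts about alternating knots: their knot Floer homology is \emph{thin}, and their crossing number is bounded above by their determinant. I first reduce to a single slope. Since $|H_1(Y)|=|p|$, the numerator is determined up to sign by $Y$, and Theorem \ref{thm:q-bound} bounds $|q|$ in terms of $|H_1(Y)|$ and $\dim HF_{red}(Y)$, so only finitely many slopes $p/q$ can yield $Y$. If there were infinitely many alternating knots giving $Y$, infinitely many would share one slope, which I fix; after possibly replacing $Y$ by $-Y$ (this mirrors $K$, preserving alternation, and reverses the slope) I may assume $p/q>0$. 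I will use both descriptions $Y=\KS$ and $-Y=S^3_{-p/q}(m(K))$ together with the fact that $\dim HF_{red}(-Y)=\dim(\HFFr)$ is a fixed number.

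Next I record the thin structure. For an alternating knot $CFK^{\infty}(K)$ is filtered homotopy equivalent to a single staircase $\mathcal{S}(\tau)$, where $\tau=\tau(K)=-\sigma(K)/2$, together with $\beta$ square (``box'') summands, and $\operatorname{rank}\widehat{HFK}(K)=\det(K)$. Three consequences matter: $\det(K)=2|\tau|+1+4\beta$; the boxes are exactly what contributes to the reduced groups, so $\delta(K)$ is a fixed positive multiple of $\beta$ (and $\delta(m(K))=\delta(K)$); and the numbers $V_k$ depend only on the staircase, hence only on $\tau$, with $V_0(K)+V_0(m(K))=\ceil{|\tau|/2}$ and $\sum_{i\ge 1}\big(V_i(K)+V_i(m(K))\big)$ growing \emph{quadratically} in $|\tau|$ (it is the torsion data of the staircase $\mathcal{S}(|\tau|)$, which grows like $|\tau|^2/4$). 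My goal is therefore to bound $|\tau|$ and $\beta$, equivalently $\det(K)$: once $\det(K)$ is bounded, the inequality $\det(K)\ge c(K)$ valid for alternating knots bounds the crossing number $c(K)$, and there are only finitely many knots of bounded crossing number.

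To obtain these bounds I would add the two rank formulas, Proposition \ref{prop:rankpos} applied to $Y=\KS$ and Proposition \ref{prop:rankneg} applied to $-Y=S^3_{-p/q}(m(K))$. Each subtracted term is at most $p\cdot\max\{V_0(K),V_0(m(K))\}=p\ceil{|\tau|/2}$, i.e. linear in $|\tau|$, so
\[
2\dim(\HFFr)\ \ge\ 2q\,\delta(K)+2q\sum_{i\ge 1}\big(V_i(K)+V_i(m(K))\big)-2p\ceil{|\tau|/2}.
\]
The right-hand side grows at least quadratically in $|\tau|$ and at least linearly in $\delta(K)$, while the left-hand side is a fixed number; hence both $|\tau|$ and $\delta(K)$ (so also $\beta$ and $\det(K)$) are bounded, completing the argument for the fixed slope and, summing over the finitely many slopes, for $Y$.

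The main obstacle is the asymmetry that forces this two-sided approach. A single positive surgery is genuinely blind to the case $\tau<0$: then all $V_i(K)$ with $i\ge 0$ vanish and Proposition \ref{prop:rankpos} only sees $\delta(K)$, so no bound on $|\tau|$ can come from $Y$ alone. The resolution is to read the $\tau<0$ complexity off the orientation-reversed manifold $-Y$, where it reappears as the $\tau>0$ complexity of $m(K)$; this is why I must know $\HFF$ rather than merely $\dim HF_{red}(Y)$, and why the identity $\dim HF_{red}(-Y)=\dim(\HFFr)$ is essential. The second point needing care is purely topological: translating the Floer-theoretic bound on $\det(K)$ into genuine finiteness of knots, which is exactly where the alternating-specific inequality $\det(K)\ge c(K)$ enters.
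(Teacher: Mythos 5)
Your route is genuinely different from the paper's: the paper never uses thinness of $CFK^{\infty}$ for alternating knots. Instead it bounds $\sum_{i\geq 0}|t_i(K)|$ by a constant $c(Y)$ (combining Proposition \ref{prop:rankpos} with the $d$-invariant formula \eqref{d-inv}, so that the subtracted maximum terms are identified exactly with the correction-term quantity $M(Y,q)$ rather than merely estimated), then invokes Murasugi's nonvanishing theorem for Alexander coefficients of alternating knots to force $g(K)\leq 3c(Y)$, deduces finiteness of Alexander polynomials, and only then passes through the Bankwitz inequality $\det(K)\geq c(K)$ via Moore--Starkston. Your slope reduction via Theorem \ref{thm:q-bound} and your determinant endgame coincide with the paper's; your middle step (Petkova's staircase-plus-boxes classification of thin complexes, which is proved over $\mathbb{F}_2$ --- harmless, since finiteness is field-independent, but worth saying) replaces Murasugi, and your displayed inequality is a correct formal consequence of Propositions \ref{prop:rankpos} and \ref{prop:rankneg} together with $\dim HF_{red}(-Y)=\dim HF_{red}(Y)$.

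However, two of your structural claims are false, though repairably so. It is not true that the boxes are exactly what contributes to the reduced groups $\boldsymbol{A}^{red}_k(K)$: a staircase with $\tau<0$ contributes as well. The left-handed trefoil is thin with $\det=3$, hence $\beta=0$, yet $\delta=1$: all its $V_i$ vanish, so Proposition \ref{prop:rankpos} gives $\dim HF_{red}(S^3_N(K))=\delta(K)$ for large $N$, while $S^3_N(K)=-S^3_{-N}(T_{2,3})$ is never an $L$-space (e.g.\ $\dim HF_{red}(\Sigma(2,3,7))=1$). The same example refutes $\delta(m(K))=\delta(K)$, since the right-handed trefoil is an $L$-space knot with $\delta=0$. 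What your argument actually needs is weaker and true: $\delta(K)=\beta+\delta(\mathcal{S}(\tau))$ with $\delta(\mathcal{S}(\tau))\geq 0$, so $\delta(K)\geq\beta$ and likewise for the mirror, and replacing $2q\delta(K)$ by $q\bigl(\delta(K)+\delta(m(K))\bigr)$ in your display changes nothing downstream. Note that the same misconception drives your ``main obstacle'' paragraph: a positive surgery is \emph{not} blind to $\tau<0$, because the negative staircase is the complex of the left-handed $(2,2|\tau|+1)$ torus knot, for which Lemma \ref{lemma:euler} with $V_k=0$ forces $\dim\boldsymbol{A}^{red}_k\geq\lceil(|\tau|-|k|)/2\rceil$; thus $\delta(\mathcal{S}(\tau))$ is itself quadratic in $|\tau|$, and a one-sided version of your inequality would already bound $|\tau|$ and $\beta$, making the mirror trick unnecessary (though still valid).

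The second genuine gap is the omitted zero-surgery case. You silently assume $Y$ is a rational homology sphere (``$|H_1(Y)|=|p|$'', reduction to $p/q>0$), but the theorem concerns an arbitrary $3$-manifold: if $b_1(Y)=1$ the only possible slope is $0$, where Propositions \ref{prop:rankpos} and \ref{prop:rankneg} do not apply. The paper disposes of this case by noting that $HF^+(S^3_0(K))$ determines the Alexander polynomial outright (Propositions 10.14 and 10.17 of \cite{OSzPropApp}); within your scheme, Proposition \ref{zerosurg} would let you bound $\sum_{k}V_k$ and $\delta(K)$ at slope zero and rerun the determinant count, but as written this case is simply missing.
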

\addtocounter{theorem}{-1}
}

The strategy of our proof is as follows. We first want to restrict the possible Alexander polynomials of knots that yield a given $3$-manifold $Y$ by surgery. We then want to show that out of this restricted set, only finitely many can be Alexander polynomials of alternating knots. This will finish the proof, due to the next Proposition, which can be found in \cite[Proposition 5.1]{mooreStarkston}. We provide the proof for the reader's convenience (and since it is nice and short).

\begin{prop}[Moore-Starkston]
There is only a finite number of alternating knots with a given Alexander polynomial.
\label{prop:fin-Alex}
\end{prop}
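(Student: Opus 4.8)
The plan is to control alternating knots through a single numerical invariant extracted from the Alexander polynomial, namely the determinant. Recall the classical theorem of Crowell and Murasugi that the Alexander polynomial of an alternating knot is \emph{alternating}: writing $\Delta_K = a_0 + \sum_{i\geq 1} a_i(t^i + t^{-i})$, all the nonzero coefficients occupy a full range and their signs strictly alternate. Consequently the determinant $\det(K) := |\Delta_K(-1)|$ equals $\sum_i |a_i|$, a positive integer that is read off directly from $\Delta_K$. Thus fixing the Alexander polynomial pins down $\det(K)$, and my first step is just to record this.

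The second and key step is to bound the crossing number in terms of the determinant. For alternating knots this is Bankwitz's inequality $c(K) \leq \det(K)$, where $c(K)$ is the crossing number. The conceptual reason is that in a reduced alternating diagram the determinant equals the number of spanning trees of the Tait (checkerboard) graph $G$, and $G$ has exactly one edge per crossing; since a reduced alternating diagram of a prime knot has $G$ bridgeless (hence $2$-edge-connected), $G$ carries at least as many spanning trees as it has edges, giving $c(K) = |E(G)| \leq \det(K)$. The composite case reduces to this one, because the determinant is multiplicative and the crossing number is additive under connected sum for alternating knots, and each nontrivial prime alternating summand has $\det \geq 3$. Either way, once $\Delta_K$ is fixed we obtain a uniform bound $c(K) \leq |\Delta_K(-1)|$.

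The final step is the observation that there are only finitely many knot types whose crossing number lies below any fixed bound, since only finitely many diagrams have a bounded number of crossings. Combining the three steps yields finitely many alternating knots with a prescribed Alexander polynomial. I do not expect a genuine obstacle here: the only nontrivial inputs are the two classical facts — the alternating sign pattern of $\Delta_K$ and the inequality $c(K)\leq\det(K)$ — which I would cite rather than reprove, so the main subtlety is simply the bookkeeping reduction from composite to prime alternating knots via multiplicativity of the determinant.
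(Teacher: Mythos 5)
Your proposal is correct and follows essentially the same route as the paper: fix the determinant $|\Delta_K(-1)|$ from the Alexander polynomial, invoke Bankwitz's inequality $c(K)\leq\det(K)$ for alternating knots, and conclude by finiteness of knots with bounded crossing number. The extra material you include (the alternating sign pattern of $\Delta_K$ and the spanning-tree proof of Bankwitz) is sound but unnecessary, since the paper simply cites Bankwitz's theorem directly.
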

\begin{proof}
By the Bankwitz Theorem \cite[Theorem 5.5]{crowell} the determinant $\det(K)$ of an alternating knot $K$ is greater than or equal to the minimal crossing number of $K$. Thus there are only finitely many alternating knots with a given determinant. The classical result \cite[page 213]{rolfsen} (or definition) $\det(K) = |\Delta_K(-1)|$ finishes the proof.
\end{proof}

For a knot $K \subset S^3$, let $\mathrm{m}(K)$ be its mirror image. Clearly, $K$ is alternating if and only if $\mathrm{m}(K)$ is. Since $S^3_{p/q}(K) = -S^3_{-p/q}(\mathrm{m}(K))$ we can assume that the surgery slope is positive (if non-zero).

For $Y$ a rational homology sphere and $q>0$ a natural number define 
$$
M(Y,q) = \frac{1}{2}(\sum_{0\leq i\leq p-1}d(L(p,q),i) - \sum_{\mathfrak{s}\in Spin^c(Y)}d(Y,\mathfrak{s})),
$$
where $p = |H_1(Y)|$.

Theorem \ref{thm:q-bound} shows that for any rational homology sphere $Y$ there is some number $n(Y)$ such that $Y \neq \KS$ for any $K$ and $|q| > n(Y)$.

If $Y$ is obtained by $p/q>0$ surgery on $K$, then by \eqref{d-inv} the numbers $V_k$ for $K$ satisfy
$$
M(Y,q) = \sum_{i=0}^{p-1}\max\{V_{\floor{\frac{i}{q}}},V_{-\floor{\frac{i-p}{q}}}\}.
$$

Combining this with Proposition \ref{prop:rankpos} we get
$$
\dim(HF_{red}(\KS)) + M(\KS,q) = q(\delta(K)+V_0+2\sum_{i\geq 1}V_i).
$$

This formula implies the following inequality:

$$
\frac{\dim(HF_{red}(\KS)) + M(\KS,q)}{q}\geq \sum_{k\geq 0}(V_k + \dim(\hA)).
$$

Now let
$$
c(Y) = \max_{1\leq q \leq n(Y)}\{\frac{\dim(HF_{red}(Y))+M(Y,q)}{q}\}.
$$

The inequality above implies, that if a rational homology sphere $Y$ is obtained by surgery on a knot $K$ with associated sequence $\{V_k\}_{k\geq 0}$, then
\begin{equation}
c(Y)\geq \sum_{k\geq 0}(V_k + \dim(\hA)).
\label{v-a-y}
\end{equation}

\begin{lemma}
Suppose $Y$ is a rational homology sphere obtained by a $p/q>0$ surgery on a knot $K \subset S^3$. Then
\begin{equation}
\sum_{i\geq 0}|t_i(K)|\leq c(Y).
\label{Y-bd}
\end{equation}
\label{torsBound}
\end{lemma}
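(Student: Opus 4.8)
The plan is to reduce the statement to the inequality \eqref{v-a-y}, which already carries all of the geometric content; what then remains is an elementary comparison between the torsion coefficients and the quantities $V_k$ and $\dim(\hAr)$. First I would invoke Lemma \ref{lemma:euler}, which records that for every $k \geq 0$ one has $t_k(K) = V_k + \chi(\hAr)$, the Euler characteristic being taken with respect to the $\ZZ/2\ZZ$-grading in which the tower $\hAT$ sits in grading $0$ (the convention recalled right after Lemma \ref{lemma:euler}).

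Next I would estimate each summand separately. Since the numbers $V_k$ are non-negative, as recalled in Section \ref{sec:calculations}, the triangle inequality gives $|t_k(K)| \leq V_k + |\chi(\hAr)|$. For any finite-dimensional $\ZZ/2\ZZ$-graded vector space $W$ one has $|\chi(W)| \leq \dim W$, because $\chi(W)$ is the difference of the even- and odd-graded dimensions whereas $\dim W$ is their sum; applying this to $W = \hAr$ yields
$$
|t_k(K)| \leq V_k + \dim(\hAr) \quad \text{for all } k \geq 0.
$$

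Finally I would sum over $k \geq 0$ and feed the result into \eqref{v-a-y}:
$$
\sum_{i \geq 0}|t_i(K)| \leq \sum_{k \geq 0}\bigl(V_k + \dim(\hAr)\bigr) \leq c(Y).
$$
I expect no genuine obstacle at this stage: all of the real work sits upstream in \eqref{v-a-y}, whose derivation combines the rank formula of Proposition \ref{prop:rankpos} with the $d$-invariant identity \eqref{d-inv} (packaged as $M(Y,q)$) and the symmetry $\dim(\boldsymbol{A}^{red}_k(K)) = \dim(\boldsymbol{A}^{red}_{-k}(K))$ used to fold the two-sided sum defining $\delta(K)$ into a one-sided sum over $k \geq 0$. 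The only point to keep in view is that the bound $|\chi| \leq \dim$ depends on the fixed $\ZZ/2\ZZ$-grading on $\hAr$, so that $\chi(\hAr)$ is precisely the well-defined quantity appearing in Lemma \ref{lemma:euler}; with that in hand the lemma is a formal corollary of the estimates already established.
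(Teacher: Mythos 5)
Your proposal is correct and takes essentially the same route as the paper's own proof, which likewise applies Lemma \ref{lemma:euler} and the elementary chain $|t_k(K)| = |V_k + \chi(\hAr)| \leq V_k + |\chi(\hAr)| \leq V_k + \dim(\hAr)$ before summing and invoking \eqref{v-a-y}. The extra care you take over the $\ZZ/2\ZZ$-grading convention and the upstream derivation of \eqref{v-a-y} is consistent with the paper and changes nothing in the argument.
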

\begin{proof}
It follows from Lemma \ref{lemma:euler} that for each $k\geq 0$
$$
|t_k(K)|=|V_k+\chi(\hA)|\leq V_k+|\chi(\hA)|\leq V_k + \dim(\hA).
$$
Combining with equation \eqref{v-a-y} yields the result.
\end{proof}

Let $S_Y$ be some set of knots in $S^3$ that give a rational homology sphere $Y$ by surgery (not necessarily all such knots and not necessarily alternating). Denote by $g(S_Y)$ ($\Delta(S_Y)$ respectively) the set of genera (Alexander polynomials respectively) of knots in $S_Y$.

\begin{lemma}
If $g(S_Y)$ is finite, then so is $\Delta(S_Y)$.
\label{g-Alex}
\end{lemma}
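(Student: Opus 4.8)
The plan is to combine two independent sources of control on a knot $K \in S_Y$: an upper bound on the degree of its Alexander polynomial coming from the hypothesis on genera, and an upper bound on the sizes of its torsion coefficients coming from Lemma \ref{torsBound}. Recall that the torsion coefficients determine the Alexander polynomial; concretely, from $t_i(K) = \sum_{j \geq 1} j\, a_{i+j}$ one computes $t_i(K) - t_{i+1}(K) = \sum_{k \geq i+1} a_k$, and subtracting the analogous identity for $i+1$ gives $a_{i+1} = t_i(K) - 2t_{i+1}(K) + t_{i+2}(K)$ for $i \geq 0$, while $a_0$ is pinned down by the normalisation $\Delta_K(1) = a_0 + 2\sum_{i\geq 1} a_i = 1$. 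Thus it suffices to show that the sequences $(t_i(K))_{i \geq 0}$ arising from knots $K \in S_Y$ take only finitely many values.

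First I would reduce to positive slopes. Since $Y$ is a rational homology sphere the surgery slope is nonzero, and for a knot realising a negative slope its mirror $m(K)$ realises a positive slope on $-Y$; as $m(K)$ has the same Alexander polynomial (hence the same torsion coefficients) and the same genus as $K$, replacing such knots by their mirrors changes neither $g(S_Y)$ nor $\Delta(S_Y)$ and only forces us to work with both $Y$ and $-Y$. Setting $C = \max\{c(Y), c(-Y)\}$, Lemma \ref{torsBound} then yields $\sum_{i \geq 0} |t_i(K)| \leq C$ for every $K \in S_Y$; in particular each $t_i(K)$ is an integer lying in $[-C,C]$.

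Next I would use the genus hypothesis to bound the support of the torsion sequence. Let $G = \max g(S_Y)$, which is finite by assumption. By the genus-detection theorem (Theorem \ref{thm:genus_detect}) we have $\deg \Delta_K \leq g(K) \leq G$, so $a_j = 0$ for $j > G$ and therefore $t_i(K) = 0$ for all $i \geq G$. Hence each sequence $(t_i(K))_{i \geq 0}$ is supported in $\{0,1,\ldots,G-1\}$ with entries among the integers in $[-C,C]$, and there are only finitely many such sequences. By the inversion identities above each sequence determines $\Delta_K$, whence $\Delta(S_Y)$ is finite.

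The routine parts are the two inversion identities and the arithmetic bookkeeping; the only points requiring care are the mirror-image reduction (needed to apply Lemma \ref{torsBound}, whose statement assumes a positive slope) and the observation that it is genuinely the genus bound — not the $\ell^1$-bound of Lemma \ref{torsBound} alone — that controls \emph{where} the nonzero torsion coefficients can sit. Without bounding the support, a sequence such as $t_N = 1$ with all other entries zero would satisfy the $\ell^1$-bound for arbitrarily large $N$ while forcing $a_{N+1} \neq 0$ and hence $g(K) \geq N+1$, so both ingredients are essential.
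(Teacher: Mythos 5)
Your proof is correct and follows essentially the same route as the paper's: the genus hypothesis forces $t_i(K) = 0$ for $i \geq \max(g(S_Y))$, Lemma \ref{torsBound} bounds $\sum_{i \geq 0}|t_i(K)|$, so only finitely many torsion-coefficient sequences can occur, and these determine $\Delta_K$. The only difference is that you spell out details the paper leaves implicit, namely the explicit inversion $a_{i+1} = t_i(K) - 2t_{i+1}(K) + t_{i+2}(K)$ together with the normalisation $\Delta_K(1) = 1$, and the mirror-image reduction to positive slopes (with $C = \max\{c(Y), c(-Y)\}$), which the paper performs once at the start of the section before stating the lemmas.
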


\begin{proof}
We clearly have $t_i(K) = 0$ for all $K \in S_Y$ and all $i \geq \max(g(S_Y))$. By Lemma \ref{torsBound}, $\sum_{i\geq 0}|t_i(K)|$ is bounded above, so we clearly have finitely many sequences $\{t_i(K)\}$ for $K \in S_Y$. Now observe that the torsion coefficients determine the Alexander polynomial so this results in at most finitely many possible Alexander polynomials.
\end{proof}

A theorem of Murasugi \cite[Theorem 1.1]{murasugiAlexAlt} is crucial for our proof:
\begin{theorem}[Murasugi]
Let $K\subset S^3$ be an alternating knot and 
$$
\Delta_K(T) = a_0 + \sum_{i=1}^{g(K)} a_i(T^i+T^{-i})
$$
be its Alexander polynomial. Then $a_i \neq 0$ for $0\leq i \leq g(K)$.
\label{murasugi}
\end{theorem}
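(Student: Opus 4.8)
The plan is to prove non-vanishing directly from a \emph{positive} state-sum expression for the Alexander polynomial, using the alternating hypothesis to rule out any cancellation. I would start from Kauffman's state model for $\Delta_K$ (as in his \emph{Formal Knot Theory}): fixing a reduced alternating diagram $D$ with $n$ crossings and two adjacent starred regions, one obtains a bijective correspondence between crossings and the $n$ unstarred regions via \emph{states}, each state $s$ contributing a monomial $\epsilon(s)\,(-T)^{k(s)}$ to $\Delta_K(T)$ (up to an overall unit), where $k(s)$ records the number of ``$-T$-corners'' selected by $s$. The first step is to show that for an alternating diagram every state carries the \emph{same} sign $\epsilon(s)$; this is the crucial point where the alternating condition is used, and it forces
$$
\Delta_K(T) \doteq \sum_{k} (-1)^k N_k\, T^k, \qquad N_k := \#\{s : k(s) = k\},
$$
so that the coefficients alternate in sign and, most importantly, $|a_k| = N_k$ with no cancellation. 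Hence $a_k \neq 0$ becomes equivalent to the existence of at least one state of degree $k$.

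The second step is to identify the range of degrees and its top with the genus. After symmetrizing, the maximal value of $k(s)$ should correspond to $\deg \Delta_K$, and I would invoke the classical fact (Murasugi, Crowell) that applying Seifert's algorithm to a reduced alternating diagram yields a minimal-genus Seifert surface, so that $\deg \Delta_K = g(K)$; equivalently one may cite genus detection (Theorem \ref{thm:genus_detect}) together with thinness. This pins down the top coefficient $a_{g} \neq 0$ and reduces the theorem to showing that no intermediate degree is skipped, i.e. $N_k > 0$ for every $0 \le k \le g$.

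The third and decisive step is to establish this contiguity using Kauffman's Clock Theorem: the set of states forms a connected lattice under clock and counterclock moves, with the ``clocked'' and ``counterclocked'' states realizing the extreme degrees. Since a single clock move changes $k(s)$ by exactly $1$, connectivity of the lattice implies that every value of $k$ between the minimum and the maximum is attained by some state, whence $N_k > 0$ throughout the range and $a_k \neq 0$ for all $0 \le k \le g(K)$.

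The main obstacle is the sign-coherence claim of the first step: proving that \emph{every} Kauffman state of a reduced alternating diagram contributes with the same sign (so that the $T^k$-coefficient is, in absolute value, a count of states rather than a signed count). This is exactly the feature that fails for general diagrams, and where one must exploit the checkerboard/Tait-graph structure of an alternating projection; the degree-by-one behavior of clock moves used in the third step is comparatively routine once the state model is set up. As an alternative route one could argue Floer-theoretically: alternating knots are thin, so $\widehat{HFK}(K,s)$ is supported in a single diagonal and $|a_s| = \dim \widehat{HFK}(K,s)$ with alternating signs, reducing the statement to $\widehat{HFK}(K,s) \neq 0$ for $0 \le s \le g$. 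This reformulation, however, runs into the identical ``no gaps'' difficulty, now naturally attacked by induction along the skein exact sequence of an alternating resolution.
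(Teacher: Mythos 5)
The paper does not prove this statement at all: it is quoted verbatim as Murasugi's theorem, with a citation to \cite[Theorem 1.1]{murasugiAlexAlt}, and is used as a black box in the proof of Lemma \ref{final}. So the relevant comparison is with the classical proofs (Murasugi, Crowell, and Kauffman's state-sum recasting), and your outline does follow that classical route: a cancellation-free expansion of $\Delta_K$ for alternating diagrams, plus a contiguity argument showing no exponent in $[0,g]$ is skipped. The architecture is sound and completable, and you correctly identify sign coherence as the crux. However, as written, the two steps that carry the actual content of the theorem are asserted rather than proven, and one auxiliary citation is circular.

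Concretely: (a) the sign-coherence lemma (every Kauffman state of a reduced alternating diagram contributes $\epsilon\,(-T)^{k(s)}$ with the same $\epsilon$) is the heart of the matter and you give no argument for it, only the correct remark that it must use the checkerboard structure; (b) the claim that a clock move changes $k(s)$ by \emph{exactly} $1$ is unjustified and a priori false: a clock transposition rotates markers at \emph{two} crossings, each rotation changing the local exponent by $0$ or $\pm 1$, so the total shift can a priori be $0$ or $\pm 2$, and connectivity of the clock lattice with steps of size $2$ would not rule out gaps (the attained exponents need only be shown to fill the interval if each step has $|\Delta k|\leq 1$, which again requires the alternating structure; alternatively one can bypass the Clock Theorem entirely via the bijection between states and spanning trees of the Tait graph and the matroid basis-exchange property, which is the Crowell-style route); (c) invoking ``Seifert's algorithm on a reduced alternating diagram is genus-minimizing'' to identify the top degree is circular in the classical development, since Crowell and Murasugi \emph{deduce} that fact from this theorem via $\deg\Delta_K\leq g(K)\leq g(F_D)$ together with nonvanishing of the top coefficient; to avoid circularity you should either cite Gabai's independent geometric proof or, better, observe that the extreme-degree states are counted by spanning trees of a Tait graph, hence $N_{\max}\geq 1$, and then the sandwich closes the loop. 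Finally, your Floer-theoretic alternative is honestly flagged but does not help: thinness gives $|a_s|=\dim\widehat{HFK}(K,s)$ and genus detection (Theorem \ref{thm:genus_detect}) gives $a_g\neq 0$, but the intermediate nonvanishing $\widehat{HFK}(K,s)\neq 0$ for $0\leq s<g$ is exactly the no-gaps statement again, which no cited structural result supplies. So this is a reasonable reconstruction of the known proof strategy, but not yet a proof.
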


The next Lemma is the last step before we can prove Theorem \ref{thm:alternating}.

\begin{lemma}
Let $K \subset S^3$ be an alternating knot that gives a rational homology sphere $Y$ by surgery. Then
$$
g(K) \leq 3c(Y).
$$
\label{final}
\end{lemma}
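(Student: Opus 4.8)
The plan is to reduce the statement to the purely combinatorial inequality $g(K) \le 3\sum_{i\ge 0}|t_i(K)|$ and then invoke Lemma~\ref{torsBound}, which gives $\sum_{i\ge0}|t_i(K)| \le c(Y)$ (after mirroring to arrange $p/q>0$, which changes neither $g(K)$ nor the numbers $|t_i(K)|$). The engine driving the combinatorial inequality is Murasugi's nonvanishing result, Theorem~\ref{murasugi}: every coefficient $a_i$ with $0\le i\le g(K)$ is nonzero. I will not need the alternating \emph{signs} of the $a_i$, only that they do not vanish.

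The bridge between the $a_i$ and the $t_i$ is the elementary second-difference identity
\begin{equation}
a_i = t_{i-1}(K) - 2t_i(K) + t_{i+1}(K), \qquad i\ge 1,
\label{plan:seconddiff}
\end{equation}
which follows by reindexing the defining sum $t_i(K)=\sum_{j\ge1}ja_{i+j}$ (the coefficient of each $a_m$ with $m\ge i+1$ telescopes to $0$, leaving only the $a_i$ term). Two consequences of \eqref{plan:seconddiff} combined with Theorem~\ref{murasugi} are what I would exploit. First, $t_{g-1}(K)=a_{g}\neq 0$, while $t_j(K)=0$ for $j\ge g$. Second, three consecutive torsion coefficients can never all vanish in the relevant range: if $t_{j}(K)=t_{j+1}(K)=t_{j+2}(K)=0$ for some $0\le j\le g-3$, then \eqref{plan:seconddiff} with $i=j+1\le g$ forces $a_{j+1}=0$, contradicting Murasugi.

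It remains to count. Among the $g$ integers $t_0(K),\dots,t_{g-1}(K)$, the last one is nonzero and no run of consecutive zeros has length $\ge 3$. Hence every maximal run of zeros is immediately succeeded by a nonzero entry, so if $n$ denotes the number of nonzero entries then there are at most $n$ such runs; each run has length at most $2$, giving at most $2n$ zeros, and therefore $g\le n+2n=3n$. Since the $t_i(K)$ are integers, each nonzero one contributes at least $1$ to $\sum_{i\ge0}|t_i(K)|$, so $\sum_{i\ge0}|t_i(K)|\ge n\ge g/3$, and combining with Lemma~\ref{torsBound} yields $g(K)\le 3c(Y)$. The one delicate point, and the reason the constant is exactly $3$ rather than the $4$ that a naive triangle inequality applied to \eqref{plan:seconddiff} produces, is this run-length bookkeeping: both the forbidden length-$3$ zero-run and the endpoint fact $t_{g-1}(K)\neq 0$ are needed to pin the ratio of zeros to nonzeros at $2:1$, and the periodic pattern $(\dots,\star,0,0,\star,0,0,\dots)$ shows that the resulting bound is sharp.
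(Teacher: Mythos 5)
Your proposal is correct and follows essentially the same route as the paper: Murasugi's nonvanishing theorem combined with the second-difference identity $a_i = t_{i-1}-2t_i+t_{i+1}$ (which the paper uses implicitly) rules out three consecutive vanishing torsion coefficients in the range $i+2\leq g$, and Lemma~\ref{torsBound} then caps $\sum_{i\geq 0}|t_i(K)|$ by $c(Y)$. The only difference is bookkeeping: the paper argues by contradiction, partitioning indices into triples $\{3k,3k+1,3k+2\}$ to get $\sum|t_i|\geq \floor{\frac{g-1}{3}}+1$, whereas you count zero-runs directly (using $t_{g-1}=a_g\neq 0$) to get $g\leq 3\sum_{i\geq 0}|t_i(K)|$, which is an equivalent and if anything slightly cleaner estimate.
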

\begin{proof}
Suppose $g(K)\geq3c(Y)+1$. Note that $a_g = t_{g-1}(K) \neq 0$. We claim that there are three consecutive indices $i$, $i+1$ and $i+2\leq g$ with $t_i(K) = t_{i+1}(K) = t_{i+2}(K) = 0$. It then follows that $a_{i+1} = 0$, which is a contradiction to Theorem \ref{murasugi}.

To prove the claim suppose there is no such consecutive triple of zero torsion coefficients. Then
$$
\sum_{i\geq 0}|t_i(K)| = \sum_{k\geq 0}(|t_{3k}(K)|+|t_{3k+1}(K)|+|t_{3k+2}(K)|) \geq \floor{\frac{g-1}{3}}+1\geq c(Y)+1,
$$
which contradicts Lemma \ref{torsBound}.

We have thus established that $g\leq 3c(Y)$.
\end{proof}

\begin{proof}[Proof of Theorem \ref{thm:alternating}]
Suppose $Y$ is a rational homology sphere. Then by Lemma \ref{final} there is a genus bound for alternating knots that give $Y$ by surgery, so by Lemma \ref{g-Alex} the set of Alexander polynomials of such alternating knots is finite.

If $Y$ is obtained by $0$-surgery on $K$, then Propositions 10.14 and 10.17 of \cite{OSzPropApp} show that  the Alexander polynomial of $K$ can be deduced directly from the Heegaard Floer homology of $Y$.

Proposition \ref{prop:fin-Alex} now finishes the proof.
\end{proof}

\section{The genus bound}
\label{sec:genus}

We now turn to the proof of Theorem \ref{thm:genus}, which we restate here.

{
\renewcommand{\thetheorem}{\ref{thm:genus}}
\begin{theorem}
For any knot $K \subset S^3$ and any $p/q \in \mathbb{Q}$ we have
$$
U^{g(K) + \ceil{g_4(K)/2}}\cdot\HFFr = 0.
$$
\end{theorem}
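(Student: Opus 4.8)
The plan is to treat the three cases of Section~\ref{sec:calculations} (positive, negative and zero surgery) uniformly, reducing in each case to a bound on the $U$-torsion order of the $\FR$-module $\HFFr$. In every case the computations already performed exhibit $\HFFr$, after removing the tower(s), as built out of two kinds of pieces: the truncated towers $\tau(\cdot)$ appearing in $\ker(\hDT)$ (Corollary~\ref{cor:dtpos}, Lemma~\ref{lemma:dtneg}, Proposition~\ref{zerosurg}) and the reduced groups $\hAr$ assembled into $\hAAr$. I would isolate two quantitative inputs: first, that every truncation order occurring is at most $V_0$; second, that $U^{g(K)}\cdot\hAr=0$ for every $k$. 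Granting these, and granting the (external) inequality $V_0\le\ceil{g_4(K)/2}$, the theorem follows once one checks that the $U$-module structure forces the two contributions to \emph{add}.

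First I would establish the key lemma $U^{g(K)-|k|}\cdot\hAr=0$, which in particular gives $U^{g(K)}\cdot\hAAr=0$. For $k\ge0$ consider the short exact sequence of complexes
\[
0\to C\{i<0 \text{ and } j\ge k\}\to \cA \xrightarrow{\cv} \cB\to0,
\]
in which the surjection is the projection $\cv$. The subcomplex $C\{i<0\text{ and }j\ge k\}$ is finite-dimensional: a generator $[\boldsymbol x,0,j_0]$ with $j_0\le g(K)$ contributes only the $U$-translates $[\boldsymbol x,-n,j_0-n]$ with $1\le n\le j_0-k$, and applying $U^{g(K)-k}$ pushes the $j$-coordinate below $k$, so $U^{g(K)-k}$ annihilates this subcomplex and hence its homology. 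Since $\hv$ is surjective (it is $U^{V_k}$ on the tower), the connecting map vanishes and the long exact sequence gives $\ker(\hv)=\operatorname{im}\big(H_*(C\{i<0\text{ and }j\ge k\})\to\hA\big)$, which is therefore annihilated by $U^{g(K)-k}$. Writing $\hA\cong\hAT\oplus\hAr$ and chasing the resulting identification $\hAr\cong\ker(\hv)/\tau(V_k)$ as $\FR$-modules shows $U^{g(K)-k}\cdot\hAr=0$; the case $k<0$ is symmetric using $\hh$ in place of $\hv$.

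Next I would address the module structure, which is the reason the bound is additive. Proposition~\ref{prop:hfipos} splits $\ker(\hD)$ only as a graded vector space, but $U$-equivariance of all maps together with surjectivity of $\hDT$ gives an honest short exact sequence of $\FR$-modules $0\to\ker(\hDT)\to\ker(\hD)\to\hAAr\to0$. Quotienting by the tower summand $\Ta\subseteq\ker(\hDT)$ produces $0\to R\to\HFFr\to\hAAr\to0$, where $R$ is the sum of the truncated towers of Corollary~\ref{cor:dtpos}. Every truncation order there has the form $V_{\floor{(i\pm np)/q}}$ or $H_{\floor{(i\pm np)/q}}$ with the relevant index forced to satisfy $V\le V_0$ (resp.\ $H\le V_0$), so $U^{V_0}\cdot R=0$; by the key lemma $U^{g(K)}$ kills the quotient $\hAAr$. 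A one-step dévissage then gives $U^{g(K)+V_0}\cdot\HFFr=0$: for $z\in\HFFr$ we have $U^{g(K)}z\in R$ and hence $U^{V_0}(U^{g(K)}z)=0$. The negative case is identical using Lemma~\ref{lemma:dtneg} and Proposition~\ref{prop:hfineg} (now $\ker(\hDT)$ is already all truncated towers, of order $\le\overline V_0$), and the zero-surgery case uses Proposition~\ref{zerosurg} the same way; in both, mirroring turns the relevant truncation bound into $V_0(m(K))=\overline V_0$. Finally, invoking $V_0\le\ceil{g_4(K)/2}$ and $\overline V_0=V_0(m(K))\le\ceil{g_4(m(K))/2}=\ceil{g_4(K)/2}$ (the slice genus being mirror-invariant) yields $U^{g(K)+\ceil{g_4(K)/2}}\cdot\HFFr=0$, and summing over \Sps\ completes the proof.

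The main obstacle is twofold. The genuinely external ingredient is the estimate $V_0\le\ceil{g_4(K)/2}$, which I would cite rather than reprove; it is what converts the purely $U$-torsion-theoretic bound into a slice-genus statement. The more delicate internal point is that one must work with the $\FR$-module (not merely vector-space) structure: it is precisely the extension $0\to R\to\HFFr\to\hAAr\to0$ that makes the answer $V_0+g(K)$ rather than $\max(V_0,g(K))$, so the dévissage step, and the verification that the relevant truncation orders never exceed $V_0$, are where the care is needed.
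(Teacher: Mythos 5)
Your argument is correct and is essentially the paper's: the key lemma comes from the same short exact sequence $0 \to C\{i<0 \text{ and } j\ge k\} \to \cA \to \cB \to 0$ together with the identification $\hAr \cong \ker(\hv)/\ker(\hvt)$ (your index-refined bound $U^{g(K)-|k|}\cdot\hAr=0$ is a harmless strengthening of the paper's Lemma \ref{lemma:As-genus}), and the three cases are then settled exactly as in the paper from Corollary \ref{cor:dtpos}, Lemma \ref{lemma:dtneg} and Proposition \ref{zerosurg} plus the cited inequality $V_0\le\ceil{g_4(K)/2}$, your module-theoretic d\'evissage $0\to R\to\HFFr\to\hAAr\to0$ being a more explicit rendering of the paper's element chase $U^{g(K)}x\in\ker(\hDT)$. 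One slip, inconsequential here: read off directly from Lemma \ref{lemma:dtneg} the truncation orders in the negative case are $\min(V_k,H_k)\le V_0(K)$ (which is what the paper uses), not $\le\overline{V}_0$ --- the bound $\overline{V}_0$ is what you get if you instead mirror to a positive surgery first --- but since $g_4$ is mirror-invariant both constants are at most $\ceil{g_4(K)/2}$, so your conclusion is unaffected.
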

\addtocounter{theorem}{-1}
}

\begin{lemma}
Let $K$ be a knot in $S^3$ with genus $g$. Then for any $k \in \ZZ$ 
$$
U^g\cdot \hAr = 0.
$$
\label{lemma:As-genus}
\end{lemma}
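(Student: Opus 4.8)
The plan is to extract all of the reduced homology of $\cA$ from the kernel of the projection $\cv\colon\cA\to\cB$, and to use genus detection to confine that kernel to a thin diagonal band, on which $U^g$ acts as zero. By the symmetry of $CFK^{\infty}$ interchanging the two filtrations (which swaps $V_k$ with $H_k$ and yields $\hAr\cong\boldsymbol{A}^{red}_{-k}(K)$), I may assume throughout that $k\ge 0$.

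First I would record the geometric input. Since $C=CFK^{\infty}$ is generated over $\mathbb{F}[U,U^{-1}]$ by classes at filtration level $(0,j)$, and by Theorem \ref{thm:genus_detect} (together with the symmetry $\widehat{HFK}(K,j)\cong\widehat{HFK}(K,-j)$) we have $\widehat{HFK}(K,j)=0$ for $|j|>g$, every generator of $C$ lies on a diagonal with $j-i\in[-g,g]$; that is, $C$ is supported in the band $|i-j|\le g$. Now consider the short exact sequence of complexes $0\to\mathcal{K}\to\cA\xrightarrow{\cv}\cB\to 0$, where $\mathcal{K}=\ker\cv=C\{i<0,\ j\ge k\}$. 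Each generator of $\mathcal{K}$ has $i\le -1$, hence by the band $j\le i+g\le g-1$, so the translate $U^g$ carries it to $(i-g,j-g)$ with $i-g<0$ and $j-g\le -1<k$ (here I use $k\ge 0$), i.e. out of the region defining $\cA$. Thus $U^g$ annihilates $\mathcal{K}$ already on the chain level, whence $U^g\cdot H_*(\mathcal{K})=0$, and therefore $U^g\cdot\ker(\hv)=0$, since $\ker(\hv)$ is the image of $H_*(\mathcal{K})$ in $\hA$ under the connecting sequence (and the image of a $U^g$-torsion module is $U^g$-torsion).

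The final, algebraic step converts this into the stated bound. Write $\hA\cong\hAT\oplus\hAr$ with $\hAT\cong\Ta$, and recall that $\hv$ restricted to $\hAT$ is multiplication by $U^{V_k}$, which is onto $\Ta$; in particular $\hv$ is surjective. Let $n_0$ be the largest $U$-torsion order occurring in $\hAr\cong\bigoplus_i \mathbb{F}[U]/(U^{n_i})$, realised by a generator $x$ with $U^{n_0}x=0\ne U^{n_0-1}x$, so that $U^g\cdot\hAr=0$ is equivalent to $n_0\le g$. Choose $x'\in\hAT$ with $\hv(x')=\hv(x)$, possible since $\hv|_{\hAT}$ is surjective. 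Then $z=x-x'\in\ker(\hv)$, and the $\hAr$-component of $U^{n_0-1}z$ equals $U^{n_0-1}x\ne 0$, so $U^{n_0-1}z\ne 0$. As $\ker(\hv)$ is killed by $U^g$, this forces $n_0\le g$, which is the claim.

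The hard part is this last step rather than the geometric estimate. One is tempted to argue that $\hAr$ simply lies inside the $U^g$-torsion submodule $\ker(\hv)$, but this is false in general: $\hv$ need not vanish on $\hAr$, since the tower $\Ta$ itself contains elements of every finite $U$-order, so a torsion class of $\hA$ may map injectively into $\cB$. The resolution is that the maximal torsion order of $\hA$ is nonetheless \emph{detected} in $\ker(\hv)$ after subtracting a tower element with the same image, which is exactly what the computation with $z=x-x'$ makes precise. It is worth double-checking that $U$ indeed preserves $\mathcal{K}$ as a subcomplex (a translate leaving $\{j\ge k\}$ becomes zero in $\cA$), so that the chain-level annihilation statement is meaningful.
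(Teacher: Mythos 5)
Your proof is correct and takes essentially the same approach as the paper: the same short exact sequence $0 \to C\{i<0,\ j\geq k\} \to \cA \xrightarrow{\cv} \cB \to 0$, the same chain-level observation that the band $|i-j|\leq g$ forces $U^g$ to annihilate the kernel complex, and hence $U^g\cdot\ker(\hv)=0$. Your final step with $z = x - x'$ is just the element-level rendering of the paper's isomorphism $\hAr \cong \ker(\hv)/\ker(\hvt)$, both resting on the surjectivity of $\hvt$ (multiplication by $U^{V_k}$ on the tower).
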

\begin{proof}
By the conjugation symmetry we may assume that $k \geq 0$. Let $C = CFK^{\infty}(K)$, $\Delta_k = C\{i<0 \text{ and } j\geq k\}$. This is a subquotient of $C$ (i.e. a subcomplex of a quotient). Note that $U^g\cdot \Delta_k = 0$, as this is the maximal possible 'height' of this complex. We illustrate the complexes $\Delta_k, \cA, \cB$ in Figure \ref{delta}.

We have an exact sequence
$$
0 \to \Delta_k \to \cA \to \cB \to 0
$$
which leads to an exact $U$-equivariant triangle

\begin{center}
\begin{equation}
{\setlength\mathsurround{0pt}
\begin{tikzcd}
H_*(\Delta_k) \arrow{r}{i_*}\arrow[leftarrow]{rd}
&\hA \arrow{d}{\hv}\\
&\hB.
\label{extriang2}
\end{tikzcd}
}
\end{equation}
\end{center}

Since $\hv$ is surjective we in fact have a short exact sequence
$$
0 \to H_*(\Delta_k) \to \hA \to \hB \to 0,
$$
so $H_*(\Delta_k) \cong \ker(\hv)$ and hence $U^g\cdot \ker(\hv) = 0$.

Recall that $\hA = \hAT \oplus \hAr$ and similarly we can decompose the map $\hv = \hvt \oplus \hvr$ into components. The map $\hvt$ is surjective. We claim that $\hAr \cong \ker(\hv)/\ker(\hvt)$. From this the conclusion of the Lemma follows immediately.

To prove the claim we construct an isomorphism from $\ker(\hv)/\ker(\hvt)$ to \linebreak $\hAr$. Let $x \in \ker(\hv) \setminus \ker(\hvt)$. Then send an equivalence class of $x$ to $\hAr$ by projection. This map is well-defined, because two different elements with the same projection are in $\ker(\hvt)$. Clearly this is also a surjective $\FR$-module homomorphism.
\end{proof}

\begin{figure}[h]
\includegraphics[scale=0.6, clip = true, trim = 30 320 60 0]{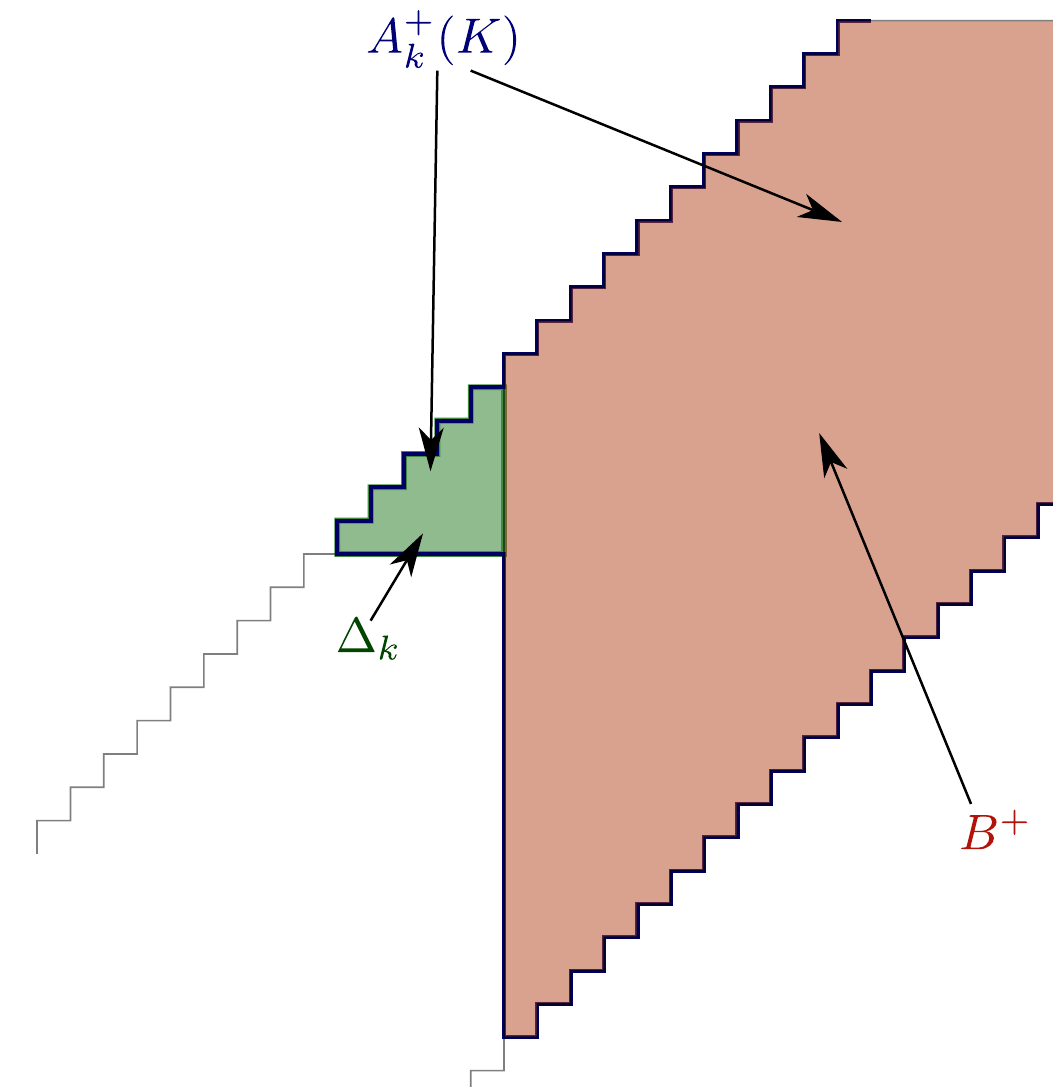}
\caption{Complexes $\Delta_k$, $\cA$ and $\cB$ inside $CFK^{\infty}$}
\label{delta}
\end{figure}

The previous lemma clearly implies

\begin{cor}
The following relation holds:
$$
U^g\cdot \hAAr = 0.
$$
\label{cor:reduced-stable}
\end{cor}

\begin{proof}[Proof of Theorem \ref{thm:genus}]
Note that by \cite[Theorem 2.3]{rasmussenGodaTerag} we have $V_0 \leq \ceil{\frac{g_4(K)}{2}}$.

If the slope is negative the reduced part is exactly equal to the kernel of $\hD$. So suppose $x\in \ker(\hD)$. By Corollary \ref{cor:reduced-stable}, $U^{g}\cdot x \in \ker(\hDT)$. But by Lemma \ref{lemma:dtneg} the kernel of $\hDT$ consist of the summands of the type $\tau(N)$ with $N \leq V_0 \leq \ceil{\frac{g_4(K)}{2}}$, so $U^{\ceil{\frac{g_4(K)}{2}}}\cdot \ker(\hDT) = 0$.

Now suppose the slope is positive. If we assume that $x \in \ker(\hD)$ then we still have $U^{g}\cdot x \in \ker(\hDT)$ and $U^{\ceil{\frac{g_4(K)}{2}}}\cdot \ker(\hDT)$ is contained in the tower part by Corollary \ref{cor:dtpos}.

Similarly, the case of zero surgery follows immediately from Proposition \ref{zerosurg}. This finishes the proof.
\end{proof}

Since by Corollary \ref{cor:dtpos} and Lemma \ref{lemma:dtneg} the reduced Floer homology of surgeries on $L$-space knots consists only of a direct sum of $\FR$-modules of the form $\tau(V_k)$, we see that if $K$ is an $L$-space knot, then $U^{\ceil{g_4(K)/2}}\cdot\HFFr = 0$.

In order to construct examples for which this genus bound gets arbitrarily large, note that every negative surgery on a knot contains a summand of the form $\tau(V_0)$. So if $V_0$ is large, the genus bound will also be large, independent of the absolute value of the negative slope we use. In particular, we can choose any order of the first homology we like.

For $L$-space knots, $V_0 = t_0$ can be read from the Alexander polynomial, in particular this is true for torus knots $T_{p,q}$ with $p,q >0$.

Suppose we have an $L$-space knot $K$ with Alexander polynomial 
$$
\Delta_K(T) = a_0 + \sum_{i=1}^ga_i(T^i+T^{-i}).
$$

Then the coefficients alternate between $1$ and $-1$, with the first non-trivial coefficient being $1$ \cite[Theorem 1.2]{OSzLensSpaceSurg}. So we clearly have 

$$
t_0 \geq \#\{a_i = 1, i>0\} \geq \frac{\#\{a_i \neq 0\}-1}{4}\geq\frac{\Delta_K(-1)-1}{4}.
$$

Consider the torus knots $T_{p,2}$ for $p$ positive odd. They have Alexander polynomials of the form
$$
\frac{(T^{2p}-1)(T-1)}{(T^p-1)(T^2-1)} = T^{p-1}-T^{p-2}+\ldots + 1,
$$
which evaluates to $p$ at $-1$.

Moreover, these examples are actually negatively oriented (see next section) small Seifert Fibred spaces, which is interesting in light of the next section.

We note that a result similar to Theorem \ref{thm:genus} can be obtained for a knot in any L-space rational homology sphere, the bound being in terms of the width of the knot Floer homology rather than the genus.

\section{Seifert fibred surgery}
\label{sec:seifert}

The aim of this section is to prove

{
\renewcommand{\thetheorem}{\ref{thm:seifert}}
\begin{theorem}
Let $K \subset S^3$ be a knot. Suppose there is a rational number $p/q > 0$ such that $Y = \KS$ is a negatively oriented Seifert fibred space. Then

\begin{itemize}
\item $U^{g(K)}\cdot HF_{red}(Y) = 0$;
\item if $0<p/q\leq 3$, then all the torsion coefficients $t_i(K)$ are non-positive (including $t_0(K)$) and $\deg \Delta_K = g(K)$;
\item more generally, if $i\geq \floor{\frac{\ceil{p/q}-\sqrt{\ceil{p/q}}}{2}}$, then $t_i$ is non-positive;
\item if $g(K) > \floor{\frac{\ceil{p/q}-\sqrt{\ceil{p/q}}}{2}}$, then $\deg \Delta_K = g(K)$;
\item if $U^{\floor{|H_1(Y)|/2}}\cdot HF_{red}(Y) \neq 0$ then $\deg \Delta_K = g(K)$.
\end{itemize}

In all statements where $\deg \Delta_K = g(K)$ we have that $\widehat{HFK}(K,g(K))$ is supported in odd degrees.
\end{theorem}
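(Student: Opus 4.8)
The plan is to feed the explicit positive-surgery description of Section \ref{sec:calculations} into the parity constraint that Seifert fibred spaces enjoy. The external input I would use is the parity statement underlying Wu's Theorem \ref{thm:wu} (coming from Ozsv\'ath--Szab\'o's computation of $HF^+$ for plumbed manifolds): for a negatively oriented Seifert fibred rational homology sphere $Y$ the reduced homology $HF_{red}(Y,\mathfrak{s})$ is supported entirely in the $\mathbb{Z}/2\mathbb{Z}$-grading \emph{opposite} to the bottom of the tower. Throughout we are in the case $p/q>0$, so Corollary \ref{cor:dtpos} and Proposition \ref{prop:hfipos} apply and $HF^+(\KS,i)\cong\ker(\hD)$.

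First I would exploit that in $\ker(\hD)\cong\ker(\hDT)\oplus\hAAr$ the reduced summands of $\ker(\hDT)$ are the towers $\tau_{d^{\pm}_n}(\cdot)$ of Corollary \ref{cor:dtpos}, whose gradings satisfy $d^{\pm}_n\equiv d\pmod 2$ and hence sit in the \emph{same} parity as the bottom of $\T$. Since the parity hypothesis forces $HF_{red}(Y,i)$ into the opposite parity, all these summands vanish, i.e.\ $\ker(\hDT)=\T$ is the bare tower in every $\mathrm{Spin}^c$-structure. This yields the first bullet at once: for reduced $x\in\ker(\hD)$ we have $U^{g(K)}\cdot x\in\ker(\hDT)=\T$ by Corollary \ref{cor:reduced-stable}, and a reduced class in the bare tower is zero. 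The same vanishing of every $\tau(V_{\floor{(i+np)/q}})$ and $\tau(H_{\floor{(i-np)/q}})$ forces $V_k=0$ for $k\geq\ceil{\ceil{p/q}/2}$. Finally, because the grading shifts relating the mapping-cone grading on $\hAAr$ to the intrinsic grading (tower $\hAAT$ at $0$) are even — they are built from the $2(\cdots)$ terms of Corollary \ref{cor:dtpos} — the odd support transfers to each intrinsic $\hAr$, so by Lemma \ref{lemma:euler} we get $t_k(K)=V_k-\dim(\hAr)$ for $k\geq0$.

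With $t_k=V_k-\dim(\hAr)$ in hand, the torsion-coefficient bullets reduce to proving $\dim(\hAr)\geq V_k$ in the stated range. For $k\geq\ceil{\ceil{p/q}/2}$ we have $V_k=0$ and this is trivial, giving $t_k\leq0$. The content is to push this down to the threshold $\floor{\tfrac{\ceil{p/q}-\sqrt{\ceil{p/q}}}{2}}$: here I would combine the decay $V_k-V_{k+1}\in\{0,1\}$ with the vanishing at $\ceil{\ceil{p/q}/2}$ (so that $V_k\leq\ceil{\ceil{p/q}/2}-k$) and a lower bound on $\dim(\hAr)$ extracted from the surviving tower-bases via the $d$-invariant formula \eqref{d-inv} and the odd-parity structure; the index at which the resulting quadratic estimate first fails is exactly $\tfrac{\ceil{p/q}-\sqrt{\ceil{p/q}}}{2}$. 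I expect this quadratic count to be the main obstacle — it is the only place the precise shape of the threshold and the $\sqrt{\ceil{p/q}}$ are forced — and it is what lets $t_0$ be included when $0<p/q\leq3$, since then $\floor{\tfrac{\ceil{p/q}-\sqrt{\ceil{p/q}}}{2}}=0$.

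For the statements $\deg\Delta_K=g(K)$ I would argue at the top Alexander grading, where $a_{g(K)}=t_{g(K)-1}(K)$. Once $g(K)$ exceeds the threshold the same quadratic count upgrades $t_{g(K)-1}\leq 0$ to strict negativity, whence $a_{g(K)}\neq0$ and $\deg\Delta_K=g(K)$; the stronger claim that $\widehat{HFK}(K,g(K))$ is supported in odd degrees then follows because the top group of the reduced $CFK^\infty$ admits no filtration-lowering differential out of its corner and so maps without cancellation, up to an even grading shift, into the odd group $\hAr$ at the extreme index. The last bullet is the contrapositive: if $\deg\Delta_K<g(K)$ then $\widehat{HFK}(K,g(K))$ has balanced parity, which via Lemma \ref{lemma:As-genus} caps the $U$-height of the relevant $\hAr$, hence of $HF_{red}(Y)\cong\hAAr$, below $\floor{|H_1(Y)|/2}$; so $U^{\floor{|H_1(Y)|/2}}\cdot HF_{red}(Y)\neq0$ forces $\deg\Delta_K=g(K)$.
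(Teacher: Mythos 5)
Your outline reproduces the paper's parity skeleton correctly: the fact that $HF_{red}$ of a negatively oriented Seifert fibred space is supported in odd $\ZZ/2\ZZ$-grading forces each $\hAr$ to be odd-supported (the paper's Lemma \ref{lemma:this}, via Proposition \ref{prop:hfipos}), kills the even-graded $\tau(V_{\floor{\frac{i+np}{q}}})$ summands of $\ker(\hDT)$, and together with Corollary \ref{cor:reduced-stable} yields the first bullet; Lemma \ref{lemma:euler} then gives $t_k=V_k-\dim(\hAr)$ exactly as you say. Your treatment of the top coefficient is also essentially the paper's: once $V_{g(K)-1}=0$ one has $a_{g(K)}=t_{g(K)-1}=\chi(\boldsymbol{A}^{red}_{g(K)-1})$, and $\boldsymbol{A}^{red}_{g(K)-1}\cong HF^+(S^3_0(K),g-1)\cong\widehat{HFK}(K,g(K))\neq 0$ by Proposition \ref{zerosurg}, \cite[Corollary 4.5]{OSzKnotInv} and genus detection, so odd support gives $t_{g(K)-1}<0$ outright — no separate ``upgrade to strict negativity'' is needed.

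The genuine gap is the threshold $\floor{\frac{\ceil{p/q}-\sqrt{\ceil{p/q}}}{2}}$, which you defer to a hoped-for ``quadratic count'' and which is precisely the step your toolkit cannot deliver. Writing $\widetilde{g}=\min\{i: V_i=0\}$ and $n=\ceil{p/q}$, the paper obtains $2\widetilde{g}\leq n-\sqrt{n}$ by citing McCoy's \cite[Lemma 2.3]{mccoy}, an adaptation of Greene's theorem \cite[Theorem 1.1]{greeneCabling}; this is a Donaldson-theorem/lattice-embedding result whose hypothesis — that $Y$ bounds a negative-definite $4$-manifold with torsion-free first homology — is the \emph{second} consequence of negative Seifert orientation extracted from \cite{OSzPlumbed}, and it never appears in your proposal, which uses only the parity of $HF_{red}$. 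Parity alone cannot produce the $\sqrt{n}$ term: it kills only those $\tau(V_k)$ that actually occur as summands in Corollary \ref{cor:dtpos}, and since the largest of $V_{\floor{\frac{i}{q}}},H_{\floor{\frac{i-p}{q}}}$ is absorbed into the $d$-invariant \eqref{d-inv}, the indices so forced to vanish only reach down to roughly $n/2$ — consistent with your own intermediate claim $V_k=0$ for $k\geq\ceil{\ceil{p/q}/2}$, but strictly weaker than $(n-\sqrt{n})/2$; in particular for $p/q\leq 3$ it gives nothing below $k=2$ and so cannot prove $t_0\leq 0$, which is exactly the improvement over Wu's Theorem \ref{thm:wu} that this theorem claims. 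There is likewise no mechanism by which the $d$-invariant formula bounds $\dim(\hAr)$ from below in the way your sketch requires. Finally, your contrapositive for the last bullet is incoherent as stated: ``balanced parity'' of $\widehat{HFK}(K,g(K))$ does not, via Lemma \ref{lemma:As-genus}, cap the $U$-height of $HF_{red}(Y)$ below $\floor{|H_1(Y)|/2}$. The paper instead runs the implication forward: the first bullet gives $\floor{|H_1(Y)|/2}\leq g(K)-1$, hence $g(K)\geq\frac{|H_1(Y)|+1}{2}>\frac{\ceil{p/q}-\sqrt{\ceil{p/q}}}{2}\geq\widetilde{g}$ — again using the McCoy--Greene bound — and concludes $\deg\Delta_K=g(K)$ from the top-coefficient identity. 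You should import that external input explicitly; without it the second through fifth bullets are unproved.
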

\addtocounter{theorem}{-1}
}
\begin{proof}
First we need to define the Seifert orientation for Seifert fibred spaces. Following \cite{OSzSeifFibr} we say that $Y$ has \slshape positive Seifert orientation \upshape if $-Y$ bounds $W(\Gamma)$, where $\Gamma$ is a weighted tree which has either negative-definite or negative-semi-definite intersection form. For the construction of the 4-manifold $W(\Gamma)$ from the weighted tree $\Gamma$ see \cite{OSzPlumbed}. We say that $Y$ has \slshape negative Seifert orientation \upshape if $-Y$ has positive Seifert orientation.

Using \cite[Corollary 1.4]{OSzPlumbed} (together with the inversion of the absolute $\ZZ/2\ZZ$-grading on the reduced homology upon reversing the orientation) we can see that if $Y$ has a negative Seifert orientation, then its reduced Floer homology is concentrated in the odd $\ZZ/2\ZZ$-grading and that it bounds a negative-definite 4-manifold with torsion free first homology group.

\begin{lemma}
Let $K \subset S^3$ be a knot. Suppose there is a rational number $p/q > 0$ such that $Y = \KS$ is a negatively oriented Seifert fibred space. Then $\hAr$ is supported in odd $\ZZ/2\ZZ$ grading for every $k$.
\label{lemma:this}
\end{lemma}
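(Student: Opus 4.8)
The plan is to read off the parity of each $\hAr$ from the oddness of $\HFr$, exploiting that $\hAAr$ is a graded summand of it; the only genuine work is reconciling the two $\ZZ/2\ZZ$-grading conventions in play. First I would recall that, $Y$ being negatively oriented Seifert fibred, the consequence of \cite[Corollary 1.4]{OSzPlumbed} recorded just above this lemma shows $HF_{red}(Y,i)$ is concentrated in odd $\ZZ/2\ZZ$-grading for every $i$ (odd meaning: measured against the genuine tower, which is even). Since $p/q>0$, Proposition \ref{prop:hfipos} exhibits $\hAAr$ as a graded direct summand of $\ker(\hD)\cong\HF$, and being $U$-torsion it lies inside $\HFr$. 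Because the mapping-cone absolute grading is by construction the correct grading on $\HF$, the summand $\hAAr$ therefore sits in odd $\ZZ/2\ZZ$-grading. Now every $\hAr$ with $|k|<g$ occurs as a block $(n,\hArn)$ of $\hAAr$ for a suitable $i$ and $n$ — as $i$ ranges over $\{0,\dots,p-1\}$ and $n$ over $\ZZ$, the index $\floor{\frac{i+np}{q}}$ runs through all integers — so each such $\hAr$ is odd in the mapping-cone grading; for $|k|\geq g$ one has $\hAr=0$ and there is nothing to prove.

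The crux is that the mapping-cone grading restricted to $\hA$ a priori differs from the intrinsic convention placing $\hAT$ in grading $0$; the two agree precisely when every top-row tower $\hAT$ lies in even mapping-cone grading. I would settle this using the explicit section $\rho^T$ constructed in the proof of Lemma \ref{lemma:surj_ker}. Under $j_*$ the genuine (even) tower of $\HF$ is carried onto the tower of $\ker(\hDT)$, which is $\mathrm{im}(\rho^T)\subset\hAAT$; its component in position $0$ is cofinal in $\boldsymbol{A}^T_{\floor{\frac{i}{q}}}(K)$, and the recursion defining $\rho^T$ produces the components in the remaining positions from this one by multiplication by powers of $U$, i.e. by even shifts of the $\ZZ$-grading. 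Hence all the top-row towers met by $\mathrm{im}(\rho^T)$ lie in even mapping-cone grading, and since these components are cofinal in each $\hAT$ the entire tower is even. Therefore the intrinsic and mapping-cone $\ZZ/2\ZZ$-gradings on $\hA$ coincide, and the odd mapping-cone parity of $\hAr$ found above is exactly its intrinsic parity, which proves the claim for every $k$.

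I expect this grading comparison to be the only real obstacle: once one accepts that the relative $\ZZ/2\ZZ$-grading on $\hA$ is canonical and that the top-row towers are linked to one another by $U$-powers, the parity bookkeeping is forced. The two points needing care are (i) confirming that $\mathrm{im}(\rho^T)$ is genuinely cofinal in each relevant $\hAT$, so that the parity of the full tower — not merely of a $U$-shifted copy — is pinned down, and (ii) invoking, exactly as set up in Section \ref{sec:mapping_cone}, the identification of the mapping-cone absolute grading with the true $HF^+$ grading, which is what lets us transport oddness from $\HFr$ back to $\hAAr$ and thence to the individual $\hAr$.
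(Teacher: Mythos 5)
Your proof is correct and takes essentially the same route as the paper: its own two-line argument likewise invokes Proposition \ref{prop:hfipos} to realise each $\hAr$ as an absolutely graded subgroup of $\HFFr$, which is supported in odd $\ZZ/2\ZZ$-grading since $Y$ is a negatively oriented Seifert fibred space. Your extra verification via $\rho^T$ that the towers $\hAT$ sit in even mapping-cone parity---so that the mapping-cone grading agrees with the paper's convention placing $\hAT$ in grading $0$---is left implicit in the paper and is a legitimate elaboration of the same argument rather than a different approach (note only that your phrase ``being $U$-torsion it lies inside $\HFr$'' should really be the graded dimension count showing the even part of $HF^+$ is exhausted by the tower inside $\ker(\hDT)$).
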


\begin{proof}[Proof of Lemma \ref{lemma:this}]
As an absolutely $\ZZ/2\ZZ$-graded group, each $\hAr$ is a subgroup of $\HFFr$ by Proposition \ref{prop:hfipos}. Since $\HFFr$ is supported in odd grading, so must each $\hAr$.
\end{proof}

Denote by $\widetilde{g}$ the minimal index $i$ for which $V_i = 0$. As previously, denote by $a_i$ the coefficient of the Alexander polynomial of $K$ corresponding to $T^i$. If $\widetilde{g}<g(K)$, then by Lemma \ref{lemma:euler}
\begin{equation}
a_{g(K)} = t_{g(K)-1} = \chi(\boldsymbol{A}^{red}_{g(K)-1}),
\label{ag}
\end{equation}

so, in particular, if all $\hAr$ are supported in the same $\ZZ/2\ZZ$-grading, then $a_{g(K)} \neq 0$, since  in this case 

$$
\boldsymbol{A}^{red}_{g(K)-1} \cong HF^+(S^3_0(K),g-1) \cong \widehat{HFK}(K,g(K)) \neq 0.
$$

It follows that in this case $\deg(\Delta_K) = g(K)$ and $\widehat{HFK}(K,g(K))$ is supported in odd degrees.

Moreover, if $\widetilde{g} = 0$, then $V_k = 0$ for all $k \geq 0$, so that
$$
t_k = \chi(\hAr) \leq 0.
$$

We now need to establish conditions which ensure that $\widetilde{g} = 0$ or $\widetilde{g}<g(K)$.

In \cite[Lemma 2.3]{mccoy} McCoy slightly modifies the proof of \cite[Theorem 1.1]{greeneCabling} by Greene to show that if $\KS$ bounds a negative-definite 4-manifold with torsion-free first homology, then
$$
2\widetilde{g}\leq n - \sqrt{n},
$$
where $n = \ceil{\frac{p}{q}}$.

It follows, that if $p/q \leq 3$, then $\widetilde{g} = 0$.

More generally, if $i \geq \floor{\frac{n-\sqrt{n}}{2}}$, where $n = \ceil{\frac{p}{q}}$, then $i \geq \widetilde{g}$ and hence $V_i = 0$. It follows that $t_i = \chi(\boldsymbol{A}^{red}_i(K))\leq 0$. If $g(K) > \floor{\frac{\ceil{p/q}-\sqrt{\ceil{p/q}}}{2}}$, then $g(K)>\widetilde{g}$ as well.

For the improvement of the genus bound, notice that all the summands of $\HFFr$ coming from $V_i$'s (i.e. of the form $\tau(V_i)$) are situated in the even grading and therefore must vanish. It now follows from the proof of Theorem \ref{thm:genus} that $U^{g(K)}\cdot\HFFr = 0$.

Now if $U^{\floor{|H_1(Y)|/2}}\cdot \HFFr \neq 0$, then $\floor{|H_1(Y)|/2}\leq g(K)-1$, so $\frac{|H_1(Y)|+1}{2}\leq g(K)$. On the other hand,
$$
\widetilde{g} \leq \frac{\ceil{p/q}-\sqrt{\ceil{p/q}}}{2} < \frac{p/q+1}{2}\leq \frac{p+1}{2} = \frac{|H_1(Y)|+1}{2}\leq g(K).
$$

It follows from \eqref{ag}, that $\deg(\Delta_K) = g(K)$.
\end{proof}

We end this section with the following

\begin{q}
Does there exist a knot $K \subset S^3$ with $\deg(\Delta_K) \neq g(K)$ and with a Seifert fibred surgery?
\end{q}

\section{Some other applications of the mapping cone formula}
\label{sec:other}

In this section, we demonstrate some other applications of the results obtained in Section \ref{sec:calculations}.

{
\renewcommand{\thetheorem}{\ref{Lsurg}}
\begin{theorem}
Let $K$ be an $L$-space knot and $p/q \leq 1$ a rational number. Then $\KS$ and $p/q$ determine the Alexander polynomial of $K$.
\end{theorem}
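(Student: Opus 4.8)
The plan is to reduce the theorem to a statement purely about the sequence $(V_k)_{k \geq 0}$ and then to invert the formulas of Section \ref{sec:calculations}. Since $K$ is an $L$-space knot we have $\delta(K) = 0$ and $\hAr = 0$ for all $k$, so $V_k = t_k(K)$ for every $k \geq 0$ (as recorded after Corollary \ref{cor:dtpos}). Because the torsion coefficients determine $\Delta_K$, it suffices to recover the whole sequence $(V_k)_{k\geq 0}$ from the graded $\FR$-module $\HFF$, which is an invariant of the manifold $\KS$, together with the value of $q$. The crucial structural feature is that the $V_k$ form a non-increasing sequence of non-negative integers with $V_k = 0$ exactly when $k \geq g(K)$; this monotonicity is what will make the inversion possible.

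First I would recover $V_0$ from the correction terms. For $0 < p/q \leq 1$ one has $p \leq q$, so for every $i$ with $0 \leq i \leq p-1$ the floors satisfy $\floor{i/q} = 0$ and $\floor{(i-p)/q} = -1$; hence $\max\{V_{\floor{i/q}}, H_{\floor{(i-p)/q}}\} = \max\{V_0, V_1\} = V_0$. By the $d$-invariant formula \eqref{d-inv} this gives $\dU - \dK = 2V_0$ on every \Sp, and since the lens-space correction terms depend only on $p/q$, summing over all \Sps\ isolates $V_0$ from $\HFF$ and $p/q$ in a label-free way. For negative slopes the analogous statement follows from Proposition \ref{prop:hfineg}, and for $p/q = 0$ from Proposition \ref{zerosurg}.

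To recover the remaining $V_k$ I would use that $\HFFr$ is a finitely generated torsion $\FR$-module, so by the structure theorem it splits uniquely into cyclic summands $\tau(n_j)$, and the multiset $\{n_j\}$ is an invariant of $\HFF$. By Corollary \ref{cor:dtpos}, summing over all \Sps, this multiset is exactly the sequence $(V_k)$ taken with explicit multiplicities determined by $p$ and $q$: each $V_k$ with $k \geq 1$ occurs $2q$ times and $V_0$ occurs $q-p$ times (one checks this against the total dimension in Proposition \ref{prop:rankpos}). Having already determined $V_0$ and knowing $q$, I would delete $q-p$ copies of the value $V_0$ from this multiset; what remains is the multiset of values of $(V_k)_{k\geq 1}$, each appearing $2q$ times. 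Dividing the multiplicities by $2q$ recovers, for each positive value $w$, the number of indices $k \geq 1$ with $V_k = w$; since $(V_k)_{k\geq 1}$ is non-increasing and eventually zero, these counts reconstruct the sequence uniquely, hence all of $(V_k)_{k\geq 0}$ and $\Delta_K$. The negative and zero slope cases run the same way, using Lemma \ref{lemma:dtneg} and Propositions \ref{prop:hfineg}, \ref{zerosurg}; in the cleanest case $p/q = 0$ one has $HF^+(S^3_0(K),k) \cong \tau(V_{|k|})$ directly, so the sequence is read off immediately.

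The main obstacle is the bookkeeping of the multiplicities in the inversion, namely verifying that the elementary divisors of $\HFFr$ really assemble into $(V_k)$ with the stated multiplicities for all $0 < p/q \leq 1$. This requires handling the two cases of Corollary \ref{cor:dtpos} (and the boundary summands indexed by $n=0$ versus $n\geq 1$) uniformly, and it is precisely here that the hypothesis $p/q \leq 1$ enters: it forces $\max\{V_{\floor{i/q}}, H_{\floor{(i-p)/q}}\} = V_0$ on every \Sp, which both isolates $V_0$ in the correction terms and pins down the multiplicity with which each $V_k$ appears among the cyclic summands. Once these multiplicities are confirmed, the monotonicity of $(V_k)$ makes the final reconstruction automatic.
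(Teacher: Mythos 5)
Your proposal is correct and follows essentially the same route as the paper's own proof: for an $L$-space knot $\HFFr$ is a direct sum of cyclic modules whose orders are the $V_k = t_k(K)$ with multiplicities determined by the slope, and monotonicity of $(V_k)$ lets you reassemble the sequence from the multiset of elementary divisors; the paper is terser but relies on exactly this. Your multiplicity bookkeeping for $0 < p/q \leq 1$ is right and worth recording: since $p \leq q$ forces $\floor{i/q} = 0$ and $\floor{(i-p)/q} = -1$ for all $0 \leq i \leq p-1$, only the first case of Corollary \ref{cor:dtpos} ever occurs, and summing over the \Sps\ the $V$-indices $\floor{(i+np)/q}$ ($n \geq 1$) sweep out $\{\floor{m/q}: m \geq p\}$ while the $H$-indices give $V_{\ceil{m'/q}}$ for $m' \geq 1$, yielding $q-p$ copies of $\tau(V_0)$ and $2q$ copies of $\tau(V_k)$ for each $k \geq 1$, consistent with Proposition \ref{prop:rankpos}. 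Your recovery of $V_0$ from the correction terms ($\dU - \dK = 2\max\{V_0, V_1\} = 2V_0$ on every \Sp) handles precisely the delicate point the paper flags, namely $t_0$ at slope $1$, where $q - p = 0$ and $\tau(V_0)$ is invisible in $\HFFr$.

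One genuine slip: your claim that for negative slopes ``the analogous statement follows from Proposition \ref{prop:hfineg}'' is false. There the correction-term discrepancy is $2N_{i,p/q} = 2\max\{\overline{V}_{\floor{i/q}}, \overline{H}_{\floor{(i+p)/q}}\}$, computed from the \emph{mirror's} invariants, and these vanish for an $L$-space knot; indeed the paper notes after Lemma \ref{lemma:dtneg} that negative surgeries on $L$-space knots have the same $d$-invariants as $L(p,q)$, independent of $K$. So the step-1 (read $V_0$ from $d$-invariants) / step-2 (delete its copies) scheme fails verbatim for $p/q < 0$. The error is harmless, however: in Lemma \ref{lemma:dtneg} the $V$-summand runs over $n \geq 0$ rather than $n \geq 1$, so $\tau(V_0)$ occurs with multiplicity $q$ among the cyclic summands of $\HFFr$, and the full monotone sequence $(V_k)_{k \geq 0}$, including $V_0$, is read off from the elementary divisors alone, with no input from correction terms -- which is how the paper argues that case.
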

\addtocounter{theorem}{-1}
}

\begin{proof}
If the slope is zero this is immediate from Proposition \ref{zerosurg}. If the slope is negative this also easily follows from Lemma \ref{lemma:dtneg} -- by looking at $\HFFr$ we can work out a sequence of numbers that represents all the torsion coefficients with some repetitions (they are orders of cyclic $\FR$-modules). But we know the number of repetitions because we know the slope. From this we deduce all the torsion coefficients (in the correct order, as they form a monotone sequence), and hence the Alexander polynomial.

If the slope is in the interval $(0,1]$ the reasoning is the same -- Corollary \ref{cor:dtpos} allows us to work out the torsion coefficients, since we know how many times each occurs. The only torsion coefficient we might not be able to work out from the module structure of $\HFFr$ is $t_0$ if the slope is 1. But in this case, it can be worked out from the $d$-invariant formula of Ni-Wu from Corollary \ref{cor:dtpos}.
\end{proof}

Sometimes we can work out a lot about the Heegaard Floer homology associated to a knot from a surgery on it even if it is not an L-space knot.

\begin{prop}
The small Seifert Fibred space $Y = S^2((2,1),(6,-1),(7,-2))$ can only be obtained by $(-4)$-surgery. All knots producing it are non-$L$-space knots. 
\label{prop:terag}
\end{prop}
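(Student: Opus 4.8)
The statement has two parts: first, that $Y = S^2((2,1),(6,-1),(7,-2))$ can only be obtained by $(-4)$-surgery, and second, that any knot producing it must be a non-$L$-space knot. The plan is to exploit the fact that $Y$ is a specific small Seifert fibred space with computable invariants, so that $|H_1(Y)|$, $\dim(HF_{red}(Y))$, and the $d$-invariants are all fixed numbers that we can compare against the constraints our surgery formula imposes on any hypothetical knot $K$ and slope $p/q$.

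First I would compute the basic invariants of $Y$. The order of the first homology is determined by the Seifert data, and from the plumbing description one can extract $|H_1(Y)|$ (which should come out to $4$, matching the slope Teragaito uses). I would then apply Theorem \ref{thm:q-bound}: since any knot producing $Y$ is non-trivial (the unknot gives only lens spaces), we have $|q| \leq |H_1(Y)| + \dim(HF_{red}(Y))$, which bounds the denominator $q$ to a small finite range. Combined with $|p| = |H_1(Y)|$, this leaves only finitely many candidate slopes $p/q$. To pin down the slope uniquely as $-4$, I would use the correction terms: by Proposition \ref{prop:hfineg} (for negative slopes) the $d$-invariants of $Y$ must agree with those of the appropriate lens space up to the shifts $2N_{i,p/q}$, and for each candidate slope the multiset $\{d(L(p,q),i)\}$ is a concrete computable set. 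Matching the actual $d$-invariants of $Y$ (read off from the negative-definite plumbing via \cite[Corollary 1.4]{OSzPlumbed}, as in the Seifert section) against these candidates should eliminate all slopes except $-4$.

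For the second part, I would argue that $Y$ cannot be a negative surgery on an $L$-space knot. The key point is that by Lemma \ref{lemma:dtneg}, negative surgery on an $L$-space knot has reduced Floer homology that is a direct sum of cyclic modules $\tau(V_k)$ and its $d$-invariants coincide with those of the lens space $L(p,q)$ shifted by the $V$-sequence; in particular the entire $HF_{red}$ and the correction-term data are completely determined by the Alexander polynomial. I would compute $\dim(HF_{red}(Y))$ and the specific grading distribution of $HF_{red}(Y)$ directly from the plumbing, then show these are incompatible with any single sequence $\{V_k\}$ arising from an $L$-space knot — for instance by checking that the reduced homology is not supported in the pattern of gradings forced by the $\tau(V_k)$ summands of Lemma \ref{lemma:dtneg}, or that the rank and $d$-invariants cannot be simultaneously realised.

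The main obstacle, I expect, will be the explicit computation of the $d$-invariants and $HF_{red}$ of $Y$ from its Seifert/plumbing data. Small Seifert fibred spaces over $S^2$ with three singular fibres correspond to star-shaped plumbing graphs, and while \cite{OSzPlumbed} gives an algorithm, carrying it out requires identifying the lattice of the intersection form and computing the relevant characteristic-vector maxima for each \Sp. Once these numbers are in hand, the comparisons in both parts should be essentially arithmetic; the delicate step is organising the plumbing calculation correctly and matching the $\mathrm{Spin}^c$ labellings between the surgery description and the plumbing description.
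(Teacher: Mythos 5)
There is a genuine gap in your first step: matching $d$-invariants against the candidate lens spaces cannot single out the slope $-4$. It does cut the candidate list $\{\pm 4, \pm 4/3, \pm 4/5\}$ (obtained, as you say, from Theorem \ref{thm:q-bound} with $|H_1(Y)| = 4$ and $\dim HF_{red}(Y) = 2$) down to $\{-4, 4/3, -4/5\}$, because only the correction terms of $L(4,-1)$ differ from those of $Y$ by integers. But it can go no further: $L(4,3) \cong L(4,-1) \cong L(4,-5)$ as oriented manifolds, and by \eqref{d-inv} (respectively Proposition \ref{prop:hfineg}) a hypothetical knot with vanishing $V$-sequence (respectively with $N_{i,p/q} = 0$) at slope $4/3$ or $-4/5$ would produce \emph{exactly} the correction terms of $L(4,-1)$, i.e.\ exactly those of $Y$. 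So the correction terms are consistent with all three remaining slopes simultaneously. What the $d$-invariant comparison actually buys is $V_0 = 0$ and $\overline{V}_0 = 0$ (hence all the relevant $V$'s, $H$'s and $N_{i,p/q}$'s vanish); the decisive ingredient missing from your outline is the rank formula of Propositions \ref{prop:rankpos} and \ref{prop:rankneg}, which then collapses to $2 = \dim(HF_{red}(Y)) = q\,\delta(K)$ and rules out $q = 3$ and $q = 5$ by divisibility. This is precisely the route the paper takes, and without some such rank count your part~1 does not close.

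Your second part has the right shape but leans on the wrong mechanism. A grading-pattern obstruction will not materialise: for a negative surgery on an $L$-space knot the reduced homology supplied by Lemma \ref{lemma:dtneg} is a sum of modules $\tau(\cdot)$ that can occupy the same $\ZZ/2\ZZ$-grading pattern as $HF_{red}(Y) \cong \tau_0(1) \oplus \tau_0(1)$, so no contradiction arises there, and the open-ended search over sequences $\{V_k\}$ you propose is unnecessary. Once part~1 is repaired the conclusion is immediate in either of two one-line ways: $2 = q\,\delta(K)$ with $q = 1$ gives $\delta(K) = 2 \neq 0$, and $\delta(K) = 0$ if and only if $K$ is an $L$-space knot; alternatively, for an $L$-space knot $V_k = 0$ exactly when $k \geq g(K)$, so $V_0 = 0$ would force $K$ to be the unknot, whereas $Y$ is not a lens space (it is not even an $L$-space). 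The paper closes with the latter argument.
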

\begin{proof}
We find the $HF^+$ of this space using the computer program HFNem2 by \c{C}a\u{g}r{\i} Karakurt\footnote{At the time of writing available for download at \url{https://www.ma.utexas.edu/users/karakurt/}}. There are four \Sps $\{\mathfrak{s}_i\}_{i=0}^3$ and $HF^+$ in them have the form 
\begin{itemize}
\item[] $HF^+(Y,\mathfrak{s}_0) \cong \mathcal{T}_{-3/4}$
\item[] $HF^+(Y,\mathfrak{s}_1) \cong \mathcal{T}_0 \oplus \tau_0(1)$
\item[] $HF^+(Y,\mathfrak{s}_2) \cong \mathcal{T}_{1/4}$
\item[] $HF^+(Y,\mathfrak{s}_3) \cong \mathcal{T}_0 \oplus \tau_0(1)$
\end{itemize}

Using Theorem \ref{thm:q-bound} we can restrict the possible slopes to $\{\pm 4, \pm 4/3, \pm 4/5\}$. Calculating the correction terms of $L(4, 1) = L(4,-3) = L(4,5)$ and $L(4, -1) = L(4,3) = L(4,-5)$
we notice that only $L(4, -1)$ has correction terms such that the difference of each of them with some correction term of $Y$ is an integer. This means that the slope has to be in $\{-4,4/3,-4/5\}$.

We also notice that the $d$-invariants of $Y$ coincide exactly with the $d$-invariants of the lens space $L(4, -1)$. By the $d$-invariant formula \eqref{d-inv} we conclude that $V_0 = 0$. A similar argument using the $d$-invariant formula for negative surgeries in Proposition \ref{prop:hfineg} establishes that $\overline{V_0} = 0$.

Now using the total dimension formuli of Propositions \ref{prop:rankpos} and \ref{prop:rankneg} we conclude
$$
2 = \dim(\HFFr) = q\delta(K),
$$
which is impossible for $q = 3$ or $q = -5$.

Comparing the labelling of \Sps\ we see that the order in which we listed $HF^+(Y,i)$ above corresponds to $i = 0, 1, 2$ and 3.

If $Y$ could be obtained by $(-4)$-surgery on an $L$-space knot, then the fact that $V_0 = 0$ would imply that its genus is zero, i.e. it is the unknot. However, $Y$ is not a lens space.
\end{proof}

It seems worth noticing that in fact there are infinitely many knots $K_n$ that produce $Y$ from the proposition above -- see \cite{teragaito}. In fact, $K_0 = \overline{9_{42}}$. $\frac{p}{q}$-surgeries on these knots have rather similar Floer homologies, in particular, all the correction terms are the same (and coincide with the correction terms of the lens space $L(p,q)$) and the total rank of reduced Floer homology is $2q$.

Moreover, we can work out the Heegaard Floer homology of all surgeries on these knots and their Alexander polynomials. Teragaito mentions in \cite[Remark 6.1]{teragaito} that $K_n$ has genus $2n+2$. In \cite[Corollary 4.5]{OSzKnotInv} it is shown that
$$
\widehat{HFK}(K,g(K)) \cong HF^+(S^3_0(K),g-1)
$$

so it is non-trivial by Theorem \ref{thm:genus_detect} and thus by Proposition \ref{zerosurg} and the fact that for present examples $V_0 = 0$ we get that $\boldsymbol{A}^{red}_{\pm (g(K)-1)}$ have to be non-trivial. By description of the Heegaard Floer homology of $Y$ in the proof of Prooposition \ref{prop:terag} we conclude that $\boldsymbol{A}^{red}_{2n+1}(K_n) = \boldsymbol{A}^{red}_{-(2n+1)}(K_n) = \tau(1)$ and $\boldsymbol{A}^{red}_{k}(K_n) = 0$ for any $k \neq \pm 2n+1$. Using Proposition \ref{prop:hfineg} we can also fix the gradings and then using results from section 3 deduce the Heegaard Floer homology of all surgeries on these knots.

\begin{prop}
The Alexander polynomial of $K_0$ is $-1+2(T+T^{-1})-(T^2+T^{-2})$. For $n \neq 0$ the Alexander polynomial is given by
$$
\Delta_{K_n}(T) = 1 - (T^{2n}+T^{-2n})+2(T^{2n+1}+T^{-(2n+1)})-(T^{2n+2}+T^{-(2n+2)}).
$$
\end{prop}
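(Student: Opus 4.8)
The plan is to extract the torsion coefficients of $K_n$ from the groups $\boldsymbol{A}^{red}_k(K_n)$ already computed, and then invert the relation between torsion coefficients and Alexander coefficients. Recall that we have shown $V_0 = 0$, which (since $V_k$ is non-increasing and non-negative) forces $V_k = 0$ for all $k \geq 0$, and that $\boldsymbol{A}^{red}_k(K_n) = 0$ for $k \neq \pm(2n+1)$ while $\boldsymbol{A}^{red}_{\pm(2n+1)}(K_n) \cong \tau(1)$. By Teragaito's computation (\cite[Remark 6.1]{teragaito}) we also know $g(K_n) = 2n+2$.

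First I would apply Lemma \ref{lemma:euler}. Since $V_k = 0$ for $k \geq 0$, it reads $t_k(K_n) = \chi(\boldsymbol{A}^{red}_k(K_n))$ for every $k \geq 0$. Hence $t_k(K_n) = 0$ for all $k \geq 0$ with $k \neq 2n+1$, while $t_{2n+1}(K_n) = \chi(\tau(1)) = \pm 1$, the sign being governed by the parity of the $\ZZ/2\ZZ$-grading in the convention that the tower of $\boldsymbol{A}^+_{2n+1}(K_n)$ sits in grading $0$. I claim this grading is odd, so that $t_{2n+1}(K_n) = -1$ and all other torsion coefficients vanish.

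Next I would reconstruct $\Delta_{K_n}$ from these torsion coefficients. For $i \geq 1$ the coefficients are recovered by the standard inversion $a_g = t_{g-1}$ and $a_i = t_{i-1} - 2t_i + t_{i+1}$ for $0 < i < g$ (using $t_k = 0$ for $k \geq g$), and $a_0$ is then pinned down by the normalisation $\Delta_{K_n}(1) = a_0 + 2\sum_{i\geq 1}a_i = 1$. With $g = 2n+2$ and the single nonzero torsion coefficient $t_{2n+1} = -1$, this gives $a_{2n+2} = -1$, $a_{2n+1} = 2$, $a_{2n} = -1$, with $a_i = 0$ for $1 \leq i \leq 2n-1$. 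When $n \geq 1$ the sum $\sum_{i\geq 1}a_i = -1+2-1 = 0$, so $a_0 = 1$, yielding the second displayed polynomial. When $n = 0$ the index $2n$ coincides with $0$ and so is excluded from $\sum_{i\geq 1}$; there $\sum_{i\geq 1}a_i = a_1 + a_2 = 1$, forcing $a_0 = -1$, which is exactly the first displayed polynomial. Writing these out gives the stated formulae.

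The genuinely delicate point, and the hard part, is fixing the sign of $\chi(\boldsymbol{A}^{red}_{2n+1})$: the parity of $\boldsymbol{A}^{red}_k(K_n)$ relative to the tower of $\boldsymbol{A}^+_k(K_n)$ differs from the parity it carries inside $HF^+(Y)$ by the mapping-cone grading shift, so one cannot simply read it off the even-graded reduced summands listed in Proposition \ref{prop:terag}. I would settle it by passing to the mirror. From that same list, $HF_{red}(Y)$ is supported in even $\ZZ/2\ZZ$-grading, so $Y$ is positively oriented and $-Y$ is negatively oriented; moreover $-Y = S^3_4(\mathrm{m}(K_n))$ is a positive surgery. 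Theorem \ref{thm:seifert} then applies to $\mathrm{m}(K_n)$ with $p/q = 4$ and forces $t_i(\mathrm{m}(K_n)) \leq 0$ for all $i \geq \lfloor (4-\sqrt{4})/2\rfloor = 1$. Since torsion coefficients are mirror-invariant, $t_{2n+1}(K_n) = t_{2n+1}(\mathrm{m}(K_n)) \leq 0$, so indeed $t_{2n+1}(K_n) = -1$, equivalently $\boldsymbol{A}^{red}_{2n+1}(K_n)$ lies in odd grading. (Alternatively, the same sign can be extracted directly from the grading bookkeeping of Proposition \ref{prop:hfineg} used earlier to fix the gradings of the groups $\boldsymbol{A}^{red}_k(K_n)$.)
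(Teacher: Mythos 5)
Your proof is correct, and it reaches the paper's conclusion by a genuinely different route at the one delicate point. The common part — $V_0=0$ forcing $V_k=0$ for all $k\geq 0$, Lemma \ref{lemma:euler} giving $t_k(K_n)=\chi(\boldsymbol{A}^{red}_k(K_n))$, and hence $t_{2n+1}=\pm 1$ with all other torsion coefficients vanishing — matches the paper, and your explicit inversion $a_i=t_{i-1}-2t_i+t_{i+1}$ together with the normalisation $\Delta_{K_n}(1)=1$ (correctly handling the $n=0$ case, where the index $2n$ collides with $0$) is a step the paper leaves entirely implicit, so spelling it out is a genuine improvement in completeness. The divergence is in fixing the sign: the paper settles it by the mapping-cone grading bookkeeping — observing via Lemma \ref{lemma:dtneg} and the absolute grading on the cone that for negative surgeries the $\ZZ/2\ZZ$-grading of $\mathbb{A}^+$ switches parity relative to the tower convention, so even-graded reduced summands of $HF^+(S^3_{-4}(K_n))$ correspond to $\boldsymbol{A}^{red}_{\pm(2n+1)}$ in odd grading — which is exactly the argument you relegate to your closing parenthesis. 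Your main argument instead mirrors the knot: from the even-graded $HF_{red}(Y)$ in Proposition \ref{prop:terag} and the orientation dichotomy for Seifert fibred rational homology spheres you deduce $-Y=S^3_4(\mathrm{m}(K_n))$ is negatively oriented, apply the third bullet of Theorem \ref{thm:seifert} with $\lfloor(\lceil 4\rceil-\sqrt{4})/2\rfloor=1\leq 2n+1$ to get $t_{2n+1}(\mathrm{m}(K_n))\leq 0$, and use mirror-invariance of the symmetrised Alexander polynomial to conclude $t_{2n+1}(K_n)=-1$. This is logically sound and non-circular (Theorem \ref{thm:seifert} nowhere uses this proposition), and it buys robustness: it sidesteps the parity-shift convention entirely, trading it for the heavier machinery inside Theorem \ref{thm:seifert} (the plumbing orientation results and the Greene--McCoy bound) plus the standard fact, implicit in the paper's definitions, that every Seifert fibred rational homology sphere carries one of the two Seifert orientations. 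The paper's argument is shorter and self-contained within the surgery-formula conventions but is precisely the kind of sign bookkeeping where errors hide; having your independent confirmation of the sign is a worthwhile cross-check.
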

\begin{proof}
From the discussion above, $V_0 = 0$ and the only non-trivial $\hAr$'s are $\boldsymbol{A}^{red}_{2n+1}(K_n) = \boldsymbol{A}^{red}_{-(2n+1)}(K_n) = \tau(1)$. Moreover, since the reduced parts of the Heegaard Floer homology of $(-4)$-surgery are in absolute $\ZZ/2\ZZ$-grading $0$, it means that $\boldsymbol{A}^{red}_{\pm(2n+1)}$ are in grading $1$. (We can see from the description of the absolute grading on the mapping cone and Lemma \ref{lemma:dtneg} that for negative surgeries the $\ZZ/2\ZZ$-grading of $\hAA$ switches from what we have defined it to be in the mapping cone.)
Now Lemma \ref{lemma:euler} implies that $t_{2n+1} = -1$ and $t_i = 0$ for all other $i \geq 0$.
\end{proof}

By a straightforward argument involving $\ZZ/2\ZZ$-grading considerations and dimension count it is not difficult to establish that in fact for $n>0$ 
$$
\widehat{HFK}(K_n,2n+2) \cong \widehat{HFK}(K_n,2n) \cong \FF \mbox{ and } \widehat{HFK}(K_n,2n+1) \cong \FF^2.
$$ 

\subsection{Property S}

Heegaard Floer homology has been very successful in restricting cosmetic surgeries on knots in $S^3$ (see \cite{niWu}, \cite{OSzRatSurg}, \cite{wangCosmetic}). In this subsection, we define a class of knots that do not admit purely cosmetic surgeries.

\begin{defn}
Let $r_1, r_2 \in \mathbb{Q}$, and $K \subset S^3$ be a knot. The surgeries on $K$ with slopes $r_1$ and $r_2$ are called cosmetic if $S^3_{r_1}(K)$ is homeomorphic to $S^3_{r_2}(K)$. They are called purely cosmetic if $S^3_{r_1}(K) \cong S^3_{r_2}(K)$, by which we mean that there exists an orientation preserving homeomorphism between them.
\end{defn}

We now begin defining the property that will imply the non-existence of purely cosmetic surgeries.

\begin{defn}
We say that a rational homology sphere $Y$ has property S if $HF_{red}(Y)$ is all concentrated in the same absolute $\ZZ/2\ZZ$-grading.
\end{defn}

\begin{defn}
We say that a knot $K \subset S^3$ has property S if $\KS$ has property S for some $p/q \neq 0$.
\end{defn}

\begin{prop}
$K$ has property S if and only if for any $p/q \geq 2g(K)-1, \ \KS$ has property S.
\end{prop}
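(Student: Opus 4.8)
The plan is to reduce property S for $K$ to a single piece of data, namely the $\ZZ/2\ZZ$-grading of $\As$, and then read off the behaviour at all large slopes. The reverse implication of the proposition is immediate: if $\KS$ has property S for every $p/q\geq 2g(K)-1$, then in particular it does so for one such nonzero slope, and so $K$ has property S by definition. For the forward implication I would first reduce to the case of a positive witnessing slope. Suppose $S^3_r(K)$ has property S for some $r\neq 0$. If $r<0$, I would use $S^3_r(K)=-S^3_{-r}(m(K))$ together with the facts that orientation reversal and mirroring each reverse the absolute $\ZZ/2\ZZ$-grading on reduced Floer homology, and hence both preserve the condition of being concentrated in a single grading. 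Thus $S^3_{-r}(m(K))$ has property S with $-r>0$, and $\As$ is concentrated in one $\ZZ/2\ZZ$-grading if and only if the corresponding group for $m(K)$ is. So after possibly replacing $K$ by $m(K)$ I may assume the witnessing slope $p_0/q_0$ is positive; this manoeuvre is what lets me sidestep the grading switch that, as noted in the discussion of the knots $K_n$, affects negative surgeries.

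Next I would show that a positive witnessing slope forces $\As$ into a single $\ZZ/2\ZZ$-grading. By Proposition \ref{prop:hfipos}, for $p_0/q_0>0$ the group $\hAAr$ is a graded direct summand of $\ker(\hD)\cong HF^+(S^3_{p_0/q_0}(K))$, and the tower lies inside $\ker(\hDT)$; hence $\hAAr$ is a graded summand of $HF_{red}(S^3_{p_0/q_0}(K))$. For a positive slope the absolute grading on the mapping cone agrees with the grading on $\hAAr$ coming from the convention that each $\hAT$ sits in grading $0$, so property S (a single grading for the reduced homology) forces $\hAAr$, and therefore every $\hAr$ and hence $\As$, into one $\ZZ/2\ZZ$-grading $\epsilon$. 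Transporting back through the mirror reduction, $\As$ for the original $K$ is concentrated in a single grading.

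Finally I would prove that $\As$ concentrated in grading $\epsilon$ implies property S for all $p/q\geq 2g-1$ (writing $g=g(K)$). The key point is that for such slopes the tower-type summands $\tau(V_k),\tau(H_k)$ appearing in $\ker(\hDT)$ via Corollary \ref{cor:dtpos} all vanish. Using $H_k=V_{-k}$ and the bookkeeping behind Proposition \ref{prop:rankpos}, the total dimension of these summands equals $q\big(V_0+2\sum_{i=1}^{g-1}V_i\big)-\sum_{i=0}^{p-1}\max\big(V_{\floor{\frac{i}{q}}},V_{\ceil{\frac{p-i}{q}}}\big)$. When $p/q\geq 2g-1$ the two index ranges $\{i:\floor{\frac{i}{q}}\leq g-1\}=[0,qg)$ and $\{i:\ceil{\frac{p-i}{q}}\leq g-1\}=(p-q(g-1),p)$ become disjoint, so for each $i$ at most one of the two $V$-values is nonzero and the subtracted sum counts exactly $q\sum_{v=0}^{g-1}V_v+q\sum_{w=1}^{g-1}V_w=q\big(V_0+2\sum_{i=1}^{g-1}V_i\big)$; thus the displayed difference is $0$. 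Consequently $HF_{red}(\KS)\cong\hAAr$ grading-preservingly (the slope being positive), and the latter is a direct sum of copies of the various $\hAr$, all in grading $\epsilon$. Hence $HF_{red}(\KS)$ is concentrated in grading $\epsilon$, i.e.\ $\KS$ has property S.

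I expect the main obstacle to be the grading bookkeeping: reconciling the $\ZZ/2\ZZ$-grading on $\hAAr$ used in the mapping cone with the grading on $HF_{red}$ of the surgery, in particular the grading switch occurring for negative surgeries. The mirror reduction in the first step is, I think, the cleanest way to neutralise this subtlety so that only the positive case need be analysed directly. The disjointness computation in the last step is quantitative but routine once the ranges are written down, and it is the genuinely computational heart of the argument.
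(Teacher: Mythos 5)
Your proposal is correct and takes essentially the same route as the paper: reduce to a positive witnessing slope by mirroring, use Proposition \ref{prop:hfipos} together with Corollary \ref{cor:dtpos} to conclude that every $\hAr$ (hence $\As$) is concentrated in a single $\ZZ/2\ZZ$-grading, and then note that for $p/q \geq 2g(K)-1$ the reduced Floer homology of the surgery is exactly $\hAAr$. Where the paper compresses the last step into ``this is enough,'' your global dimension count via Proposition \ref{prop:rankpos} showing that all $\tau(V_k)$, $\tau(H_k)$ summands vanish at such slopes, and your explicit handling of the grading flip when transporting through the mirror, are correct fillings-in of details the paper leaves implicit rather than a genuinely different argument.
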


\begin{proof}
Suppose $\KS$ has property S. By taking the mirror of the knot, we may assume $p/q >0$.

Then by looking at Corollary \ref{cor:dtpos} and Proposition \ref{prop:hfipos} we see that for all  $k$ all elements of $\hAr$ are in the same $\ZZ/2\ZZ$-grading. This is enough for all elements of $\HFF$ for $p/q \geq 2g(K)-1$ to be concentrated in the same $\ZZ/2\ZZ$-grading.
\end{proof}

\begin{cor}
There are no purely cosmetic surgeries on non-trivial knots with Property S.
\end{cor}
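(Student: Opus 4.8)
The plan is to combine the two descriptions of the reduced Floer homology of a surgery and of its mirror with the way the absolute $\mathbb{Z}/2\mathbb{Z}$-grading behaves under orientation reversal. First I would reduce to the case of opposite slopes: by the cosmetic-surgery results of Ni--Wu and Wang \cite{niWu,wangCosmetic}, a purely cosmetic pair on a non-trivial knot in $S^3$ must consist of slopes $r$ and $-r$. Replacing $K$ by its mirror $\mathrm{m}(K)$ if necessary --- which replaces each $\hAr$ by a group in the opposite $\mathbb{Z}/2\mathbb{Z}$-grading of the same dimension, hence preserves both non-triviality and Property S --- I may assume the pair is $p/q$ and $-p/q$ with $p/q>0$, and write $Y=\KS\cong S^3_{-p/q}(K)$ via an \emph{orientation-preserving} homeomorphism.

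Next I would compute $HF_{red}(Y)$ in the two resulting ways. Computing $Y=\KS$ from the positive slope, Proposition \ref{prop:hfipos} gives $\HFFr\cong\ker(\hDT)\oplus\hAAr$, with the summand $\hAAr$ carried in its mapping-cone grading. By the Proposition preceding this corollary, Property S forces every $\hAr$ into a single $\mathbb{Z}/2\mathbb{Z}$-grading, and after the mirror normalisation above I may take this grading to be the odd one. Computing the \emph{same} oriented manifold from the negative slope, Proposition \ref{prop:hfineg} gives $HF_{red}(Y)\cong\ker(\hDT)\oplus\mathcal{A}$, where $\mathcal{A}$ is obtained from $\hAAr$ by removing one truncated-tower summand $\tau(N_{i,p/q})$; crucially, as recorded in the discussion of the Teragaito examples, for a negative surgery the $\mathbb{Z}/2\mathbb{Z}$-grading carried by $\hAA$ is switched relative to the mapping-cone convention.

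The contradiction should then come from comparing parities. Since the homeomorphism is orientation-preserving, the two computations must agree as absolutely $\mathbb{Z}/2\mathbb{Z}$-graded groups. On the positive side the $\hAr$-summands sit in odd grading; on the negative side the very same summands are switched into even grading. Isolating the odd part of $HF_{red}(Y)$ therefore equates the entire $\hAr$-contribution with a group that contains none of it, forcing $\hAAr=0$. This means $\delta(K)=0$, i.e. $K$ is an $L$-space knot, so the parity argument reduces the whole statement to the $L$-space case.

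The hard part will be twofold. First, the summand $\ker(\hDT)$ coming from the tower also acquires a grading shift under the negative-surgery conventions --- its $\tau$-pieces sit at gradings $d^{\pm}_n\equiv d+1$ by Lemma \ref{lemma:dtneg}, opposite to the positive case of Corollary \ref{cor:dtpos} --- so to make the parity comparison genuinely isolate the $\hAr$-contribution one must keep careful track of exactly which summands switch; if only dimension relations survive, one must instead feed in the finer matching of correction terms from \cite{niWu} together with the rank formulae of Propositions \ref{prop:rankpos} and \ref{prop:rankneg}. Second, the $L$-space-knot case must be disposed of separately: there $\hAAr=0$ and Property S holds vacuously, so the parity argument gives nothing. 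Here I would use that a non-trivial $L$-space knot has $V_0\neq 0$, so that neither $\KS$ nor $S^3_{-p/q}(K)$ is an $L$-space, and then compare the correction-term spectra of the two surgeries through the $d$-invariant formula \eqref{d-inv} and its negative analogue in Proposition \ref{prop:hfineg}; I expect these spectra cannot coincide for opposite slopes on a non-trivial knot, which would complete the proof.
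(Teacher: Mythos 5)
Your central step fails as stated. You claim that comparing parities ``equates the entire $\hAr$-contribution with a group that contains none of it, forcing $\hAAr=0$,'' but the odd part of the negative-slope computation is not zero: it is the truncated-tower part $\ker(\hDT)\cong\bigoplus\tau(H_{\cdot})\oplus\bigoplus\tau(V_{\cdot})$ of Lemma \ref{lemma:dtneg}, whose summands sit at gradings $d^{\pm}_n\equiv d+1 \pmod 2$ --- i.e.\ in exactly the parity into which you have placed $\hAAr$ on the positive side (you even flag this in your ``hard part'' paragraph, but it is not a technicality to be tracked, it is fatal to the conclusion). Carrying the comparison out honestly, the odd parts give $q\delta(K)=qV_0+2q\sum_{i\geq 1}V_i$ and the even parts give $\sum_{i}\max\{V_{\floor{i/q}},H_{\floor{(i-p)/q}}\}=\sum_i N_{i,p/q}$; these two dimension equations are mutually consistent and are satisfied by many knots, so no contradiction and no vanishing of $\hAAr$ follows from parity bookkeeping alone. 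The missing ingredient is precisely the pair of Ni--Wu inputs that the paper's proof feeds in: for a purely cosmetic pair one has $\chi(\HFFr)=0$ and $V_0=\overline{V}_0=0$. The latter kills all $V_i$ and $H_i$ for $i\geq 0$, hence all $\tau$-summands in both computations, after which your parity argument (or, as in the paper, simply Property S forcing $|\chi(\HFFr)|=\dim\HFFr$) does collapse $\HFFr=\hAAr$ to zero, making $K$ an $L$-space knot.

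Your second deferral has the same root. You leave the $L$-space case to an ``expectation'' that the correction-term spectra of opposite surgeries cannot coincide --- but proving that is exactly the content of Ni--Wu's result $V_0=\overline{V}_0=0$, which you were entitled to cite all along and which settles the case in one line: an $L$-space knot with $V_0=0$ has genus zero and is therefore the unknot, contradicting non-triviality. So your skeleton (opposite slopes, the two mapping-cone computations, the grading switch for negative surgeries) is sound and close in spirit to the paper's argument, but as written the proof has two genuine gaps, both of which are filled by importing the two cited facts from \cite{niWu} rather than by the parity comparison you propose.
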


\begin{proof}
The proof is completely analogous to \cite[proof of Corollary 3.12]{niWu}. In fact, Ni and Wu show that if $Y$ can be obtained by a purely cosmetic surgery, then the Euler characteristic of $\HFFr$ has to be 0. They also show that $V_0$ and $\overline{V_0}$ have to be zero for a knot that admits cosmetic surgeries. This implies that $V_i = H_i = 0$ for $i \geq 0$, so we do not have any $\tau(V_i)$ groups in the reduced Floer homology. A knot with property S has all the $\hA$ groups concentrated in the same $\ZZ/2\ZZ$-grading and in the case at hand these are the groups that constitute the reduced Floer homology. Therefore, in this case, the Euler characteristic of $\HFFr$ is equal to ($\pm$) its rank, so it is an $L$-space. however, if an $L$-space knot has $V_0 = 0$, then it is trivial.
\end{proof}

In \cite[Corollary 3.12]{niWu} Ni and Wu show that Seifert fibred spaces cannot be obtained by purely cosmetic surgeries. We can extend this result as follows.

\begin{cor}
There are no purely cosmetic surgeries on knots with non-zero Seifert fibred surgeries.
\end{cor}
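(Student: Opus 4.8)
The plan is to reduce this statement to the immediately preceding corollary, which already rules out purely cosmetic surgeries on non-trivial knots with property S. So the entire content I need to supply is the implication: if a non-trivial knot $K$ admits a non-zero Seifert fibred surgery, then $K$ has property S.

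First I would fix a slope $p/q \neq 0$ for which $Y = \KS$ is Seifert fibred. Since $p \neq 0$ we have $|H_1(Y)| = |p| < \infty$, so $Y$ is a rational homology sphere and property S is meaningful for it. It then remains only to verify that $HF_{red}(Y)$ is concentrated in a single absolute $\ZZ/2\ZZ$-grading, which by definition gives $Y$ property S, and hence $K$ property S (as $p/q \neq 0$).

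To verify this concentration I would reuse the mechanism from the proof of Theorem \ref{thm:seifert}. A Seifert fibred rational homology sphere carries, in one of its two orientations, a negative Seifert orientation, and then \cite[Corollary 1.4]{OSzPlumbed} forces $HF_{red}(Y)$ into the odd $\ZZ/2\ZZ$-grading; in the opposite case one applies this to $-Y$ and uses that reversing orientation inverts the absolute $\ZZ/2\ZZ$-grading, placing $HF_{red}(Y)$ entirely in the even grading. Either way $HF_{red}(Y)$ sits in a single parity, so $Y$, and hence $K$, has property S, and the preceding corollary then completes the argument.

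The only genuinely nontrivial ingredient is this orientation and parity bookkeeping — checking that every Seifert fibred rational homology sphere bounds, in one of its two orientations, a negative-definite star-shaped plumbing so that \cite{OSzPlumbed} applies, and tracking what orientation reversal does to the $\ZZ/2\ZZ$-grading. All of this is already in place from the proof of Theorem \ref{thm:seifert}, so I expect no real obstacle. I would also flag that non-triviality of $K$ is essential here: the unknot has Seifert fibred (lens space) surgeries that include genuine purely cosmetic pairs (orientation-preservingly homeomorphic lens spaces arising from distinct slopes), and it is excluded precisely because the preceding corollary is stated for non-trivial knots.
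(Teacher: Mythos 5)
Your proof is correct and is essentially the paper's argument: the paper likewise disposes of the corollary in one line by citing \cite{OSzPlumbed} for the fact that Seifert fibred rational homology spheres have property S and then invoking the preceding corollary, with your orientation/parity bookkeeping simply making explicit what that citation packages. Your flag about the unknot is a fair observation --- the statement implicitly inherits the non-triviality hypothesis of the preceding corollary, since lens space surgeries on the unknot (e.g. slopes $7/2$ and $7/4$ giving $L(7,2) \cong L(7,4)$) are genuinely purely cosmetic.
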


\begin{proof}
By \cite{OSzPlumbed} Seifert fibred rational homology spheres have property S.
\end{proof}

We remark that there are knots which do not have this property, for example $9_{44}$. Indeed, $+1$ and $-1$-surgeries on this knot have the same $HF^+$, but are not homeomorphic \cite[Section 9]{OSzRatSurg}.

\bibliographystyle{plain}
\bibliography{biblio}

\end{document}